\documentclass[12pt,letterpaper, reqno]{amsart}
\usepackage[margin=2.5cm]{geometry}
\usepackage{amscd}
\usepackage{amssymb}
\usepackage{amsthm}
\usepackage{graphicx}
\usepackage{color}
\usepackage[all]{xy}
\usepackage{mathrsfs}
\usepackage{marvosym}
\usepackage{stmaryrd}
\usepackage{srcltx}
\usepackage{mathtools}
\usepackage{scalerel,amssymb}
\usepackage{tikz-cd}
\usepackage{adjustbox}
\usepackage{placeins}
\usepackage{floatrow}
\usepackage{setspace}

\usepackage{todonotes}
\newfloatcommand{capbtabbox}{table}[][\FBwidth]
\definecolor{amaranth}{rgb}{0.9, 0.17, 0.31}
\usepackage[colorlinks=true,citecolor=amaranth,linkcolor=black]{hyperref}%
\usetikzlibrary{shapes,arrows,chains}
\usetikzlibrary{calc}

\let\oldtocsection=\tocsection
\let\oldtocsubsection=\tocsubsection
\let\oldtocsubsubsection=\tocsubsubsection
\renewcommand{\tocsection}[2]{\hspace{0em}\oldtocsection{#1}{#2}}
\renewcommand{\tocsubsection}[2]{\hspace{1em}\oldtocsubsection{#1}{#2}}
\renewcommand{\tocsubsubsection}[2]{\hspace{0.5em}\oldtocsubsubsection{#1}{#2}}
\newtheorem{theorem}{Theorem}[section]
\newtheorem{lemma}[theorem]{Lemma}
\newtheorem{proposition}[theorem]{Proposition}

\newtheorem{conjecture}[theorem]{Conjecture}

\newtheoremstyle{defstyle}
  {.6em} % Space above
  {.1em} % Space below
  {} % Body font
  {} % Indent amount
  {\bfseries} % Theorem head font
  {.} % Punctuation after theorem head
  {.5em} % Space after theorem head
  {} % Theorem head spec (can be left empty, meaning `normal')
\theoremstyle{defstyle} \newtheorem{definition}[theorem]{Definition}

\newtheorem{example}[theorem]{Example}

\theoremstyle{remark}
\newtheorem{remark}[theorem]{Remark}

\numberwithin{equation}{section}
\numberwithin{figure}{section}

\newcommand{\cF} {\mathcal{F}}

\newcommand{\cC} {\mathcal{C}}
\newcommand{\Spec} {\mathrm{Spec}}
\newcommand{\Proj} {\mathrm{Proj}}
\newcommand{\gr} {\mathrm{gr}}

\newcommand{\kk} {\Bbbk} 
\newcommand{\A} {\mathbb{A}}
\newcommand{\CC} {\mathbb{C}}

\newcommand{\G} {\mathbb{G}}
\newcommand{\bS} {\mathbb{S}}
\newcommand{\Hom} {\mathrm{Hom}}

\newcommand{\cO} {\mathcal{O}}

\newcommand{\PP} {\mathbb{P}}
\newcommand{\QQ} {\mathbb{Q}}
\newcommand{\RR} {\mathbb{R}}

\newcommand{\ZZ} {\mathbb{Z}}

\newcommand{\sP} {\mathscr{P}}
\newcommand{\cY} {\mathcal{Y}}
\newcommand{\cD} {\mathcal{D}}

\newcommand{\cX} {\mathcal{X}}
\newcommand{\cP} {\mathcal{P}}

\begin{document}

%\title[Log Calabi--Yau compactifications of $SL(2,\CC)$ character varieties]{Log Calabi--Yau compactifications of $SL(2,\CC)$ character varieties}

\title[Log Calabi--Yau compactifications of $SL(2,\CC)$ character varieties]{\fontsize{10}{1}\selectfont Log Calabi--Yau compactifications of $SL(2,\CC)$ character varieties}

\author[H.\,Arg\"uz]{\fontsize{10}{1}\selectfont H\"ulya Arg\"uz}
\address{The Mathematical Institute, University of Oxford, Oxford, OX2 6GG, UK}
\email{hulya.arguz@maths.ox.ac.uk}

\author[P.\,Bousseau]{Pierrick Bousseau}
\address{The Mathematical Institute, University of Oxford, Oxford, OX2 6GG, UK}
\email{pierrick.bousseau@maths.ox.ac.uk}

\begin{abstract}
We prove that the $SL(2,\mathbb{C})$ character varieties of compact oriented surfaces and the generic relative $SL(2,\mathbb{C})$ character varieties of punctured surfaces admit divisorial log terminal (dlt) log Calabi--Yau compactifications. To do this, we establish a general result giving sufficient conditions for a compactification of an affine variety arising from a filtration of its algebra of regular functions to be log Calabi--Yau. We then apply this result to show that the compactifications constructed by Kutteri--Tehrani--Frohman in the compact case and by Tehrani--Frohman in the punctured case are log canonical and log Calabi--Yau.

\end{abstract}
\newtheoremstyle{cited}%
  {3pt}% (space above)
  {3pt}% (space below)
  {\itshape}% (body font)
  {}% (indent amount)
  {\bfseries}% {theorem head font}
  {.}% {punctuation after theorem head}
  {.3em}% {space after theorem head}
  {\thmname{#1} \thmnumber{#2}\thmnote{\normalfont#3}}% {theorem head spec}

\theoremstyle{cited}
\newtheorem{citedthm}{Theorem}
\renewcommand*{\thecitedthm}{\Alph{citedthm}}

\maketitle

\setcounter{tocdepth}{1}
\tableofcontents
\setcounter{section}{0}

\section{Introduction}

Character varieties are affine complex algebraic varieties parameterizing conjugacy classes of representations of the fundamental group of a topological surface into a complex reductive algebraic group \cite{LM, sikora}. They lie at the crossroads of low-dimensional topology, algebraic geometry, and mathematical physics. 
In particular, character varieties play an important role in non-abelian Hodge theory \cite{simpson1994moduliI, simpson1994moduliII}, the Betti formulation of the geometric Langlands program \cite{ben2016betti}, and the study of supersymmetric gauge theories \cite{GMN_spectral}.

A folklore conjecture predicts that character varieties admit log Calabi--Yau compactifications with relatively mild singularities \cite{GHKK, KNPS, KS_WCS, simpson2016dual}; see Conjecture \ref{conj_main} for a precise formulation. In this paper, we prove this conjecture for $SL(2,\CC)$ character varieties of compact oriented surfaces and for relative $SL(2,\CC)$ character varieties of punctured oriented surfaces. Our approach is based on a general result giving sufficient conditions for a compactification of an affine variety, arising from a filtration of its algebra of regular functions, to be log Calabi--Yau. We apply this result to the compactifications recently constructed by Kutteri--Tehrani--Frohman \cite{ayilliath2025geometric} in the compact case and by Tehrani--Frohman \cite{farajzadeh2023compactifications} in the punctured case, showing that they are log Calabi--Yau and have log canonical singularities.

After briefly reviewing character varieties of surfaces in \S\ref{sec_intro_character} and log Calabi--Yau compactifications in \S\ref{sec_intro_log_symplectic}, we state our main results in \S\ref{sec_intro_main_results}.

\subsection{Character varieties of surfaces}
\label{sec_intro_character}
For each integer \( g \ge 0 \), let \( \bS_g \) be the compact, connected, oriented topological surface of genus \( g \).
For every complex connected reductive algebraic group $G$, the space
\[
\Hom(\pi_1(\bS_g), G)
\]
of representations of the fundamental group \( \pi_1(\bS_g) \) in \( G \) naturally carries the structure of an affine variety over $\CC$. The group \( G \) acts on this space by conjugation and
the \emph{\( G \)-character variety} of \( \bS_g \) is defined as the affine geometric invariant theory (GIT) quotient, that is, as the Spec of the algebra of $G$-invariant regular functions \cite{LM, sikora},
\[
\mathrm{Ch}_g^G \coloneqq \Hom(\pi_1(\bS_g), G)\ /\!/\, G\,.
\]
The affine variety $\mathrm{Ch}_g^G$ has connected components indexed by the finite group $\pi_1(G_1)$, where $G_1$ is the derived subgroup of $G$ \cite{shu2024singularities}. These connected components are typically singular. They are known to be normal and $\QQ$-factorial,
of dimension $(2g-2) \dim G +2 \dim Z(G)$, where $Z(G)$ is the center of $G$, and to have Gorenstein canonical singularities \cite{shu2024singularities}.

Similarly, for all integers $g\geq 0$ and $n>0$, let $\bS_{g,n}$
be the complement of $n$ points in $\bS_g$. Then, the $G$-character variety $\mathrm{Ch}_{g,n}^G \coloneqq \Hom(\pi_1(\bS_{g,n}), G)\ /\!/\, G$ admits a natural map 
\[ \pi: \mathrm{Ch}_{g,n}^G \longrightarrow (G\ /\!/\, G)^n\]
given by the conjugacy classes of elements in $G$
assigned to small oriented loops around the punctures.
The fibers $\mathrm{Ch}_{g,n}^{G,t}:=\pi^{-1}(t)$ over 
$t \in  (G\ /\!/\, G)^n$ are affine varieties referred to as the \emph{relative $G$-character varieties} of $\bS_{g,n}$.

\subsection{Log Calabi--Yau compactifications}
\label{sec_intro_log_symplectic}

Since character varieties are affine, understanding their geometry at infinity naturally leads to the study of their projective compactifications. A folklore conjecture, motivated in part by mirror symmetry, predicts that character varieties admit log Calabi--Yau compactifications \cite{GHKK, KNPS, KS_WCS, simpson2016dual}. By a \emph{log Calabi--Yau compactification} of a normal variety $X$, we mean a pair $(Y,D)$, where $Y$ is a normal projective variety and $D$ is a reduced effective Weil divisor on $Y$, such that $X=Y \setminus D$ and $(Y,D)$ has trivial log canonical divisor class, that is, $K_Y +D \sim 0$.
The existence of a log Calabi--Yau compactification imposes strong restrictions on an affine variety. A necessary, though generally not sufficient, condition is that $K_X\sim 0$. Already in dimension one, the existence of such a compactification is highly restrictive: the only normal affine curve admitting a log Calabi--Yau compactification is $\CC^\star=\PP^1\setminus\{0,\infty\}$.

From the perspective of the Minimal Model Program, it is natural to further require the compactifying pair $(Y,D)$ to have mild singularities, such as log canonical or divisorial log terminal (dlt) singularities. These classes of singularities arise naturally in birational geometry: log canonical and dlt singularities are precisely the types of singularities that occur on canonical and minimal models of simple normal crossing pairs, respectively. We refer to \cite{kollar2013singularities, KM} for the general definitions of these singularities and their role in birational geometry. In the case of log Calabi--Yau pairs, these notions admit a particularly useful description in terms of volume forms.

Indeed, let $(Y,D)$ be a log Calabi--Yau compactification of a normal variety $X$. The relation $K_Y+D\sim 0$ implies the existence of a holomorphic volume form $\Omega$ on the smooth locus of $X$ with a first order pole along every irreducible component of $D$. Let $\pi:\widetilde{Y}\to Y$ be a log resolution, namely a projective birational morphism from a smooth projective variety $\widetilde{Y}$, such that the union $\widetilde{D}$ of the strict transforms of the irreducible components of $D$ together with the exceptional divisors of $\pi$ is a simple normal crossing divisor on $\widetilde{Y}$. Then $(Y,D)$ is log canonical if and only if the pullback $\pi^\star\Omega$ has at most first order poles along every irreducible component of $\widetilde{D}$. Moreover, $(Y,D)$ is dlt if, in addition, the image $\pi(\widetilde{D}_i)$ of every irreducible component $\widetilde{D}_i$ along which $\pi^\star\Omega$ has a first order pole is generically contained in the simple normal crossing locus of $(Y,D)$.

Log canonical singularities form a broader and, in many respects, more natural class for the general study of log Calabi--Yau varieties. Nevertheless, as reviewed in Lemma \ref{lem_dlt_modifications}, for $\QQ$-factorial varieties with canonical singularities, the existence of a log canonical log Calabi--Yau compactification in fact implies the existence of a dlt log Calabi--Yau compactification. Since $\mathrm{Ch}_g^G$ is known to be $\QQ$-factorial with canonical singularities \cite{shu2024singularities}, it is natural to refine the folklore conjecture for compact surfaces by requiring the log Calabi--Yau compactification to be dlt, as in \cite[Conjecture 1.3.1]{mauri2022geometric}. By contrast, it is not currently known whether the relative character varieties $\mathrm{Ch}_{g,n}^{G,t}$ are always $\QQ$-factorial with canonical singularities. We therefore refine the conjecture for punctured surfaces by requiring only that the log Calabi--Yau compactification be log canonical:

\begin{conjecture}\label{conj_main}
Let $G$ be a complex reductive group. Then the following hold:
\begin{itemize}
\item[(i)] \emph{(Compact case)}
For every integer $g\geq 0$, each connected component of the $G$-character variety $\mathrm{Ch}_g^G$ admits a dlt log Calabi--Yau compactification.

\item[(ii)] \emph{(Punctured case)}
For every pair of integers $g\geq 0$ and $n>0$, and every $t\in (G//G)^n$, each connected component of the relative $G$-character variety $\mathrm{Ch}_{g,n}^{G,t}$ admits a log canonical log Calabi--Yau compactification.
\end{itemize}
\end{conjecture}

For $G=GL(1,\CC)=\CC^\star$, the 
conjecture is straightforward: both $\mathrm{Ch}_g^{\CC^\star}$ and $\mathrm{Ch}_{g,n}^{\CC^\star,t}$ are isomorphic to $(\CC^\star)^{2g}$, and hence admit smooth projective toric compactifications, which are dlt log Calabi--Yau. Thus, Conjecture \ref{conj_main} may be viewed as seeking non-toric analogues of this familiar picture for non-abelian groups.

The first nontrivial examples in the non-abelian setting arise for $G=SL(2,\CC)$ and the particular pairs $(g,n)=(0,4)$ and $(g,n)=(1,1)$, where the trace induces an isomorphism
$(SL(2,\CC) \ /\!/\, SL(2,\CC))^n \simeq \CC^n$. In these cases \cite{fricke_klein, goldman2010affine}, the relative character varieties $\mathrm{Ch}_{g,n}^{SL(2,\CC),t}$ are affine cubic surfaces $X$ in $\CC^3$ of the form
\[xyz+x^2+y^2+z^2=f_t(x,y,z)\,.\]
For  $(g,n)=(1,1)$ and $t\in \CC$, one has $f_t(x,y,z)=t+2$, whereas for $(g,n)=(0,4)$ and $t=(t_1,t_2,t_3,t_4)\in\CC^4$, one has
\[ f_t(x,y,z)= (t_1t_2 +t_3t_4)x+(t_2t_3 +t_4t_1)y +(t_3t_1+t_2t_4)z +(4-t_1^2-t_2^2-t_3^2-t_4^2-t_1t_2t_3t_4) \,.\]
In these two cases, the closure $Y$ of $X$ in $\PP^3$ is a projective cubic surface, and the boundary divisor $D=Y\setminus X$ is a triangle of lines. Consequently, $K_Y+D \sim 0$, so $(Y,D)$ is a log Calabi--Yau compactification of $X$. Moreover, by \cite[Theorem 1]{goldman2010affine}, $Y$ is smooth in a neighborhood of $D$, the divisor $D$ is simple normal crossing, and $X$ has at most canonical singularities. Thus, $(Y,D)$ is log canonical.

\subsection{Main results}
\label{sec_intro_main_results}

Our first main result, stated below as Theorem \ref{thm_intro_A} and proved in the main body as Theorem \ref{thm_main_A}, establishes Conjecture \ref{conj_main} for $G=SL(2,\CC)$ for arbitrary $g$ and $n$. In the punctured case, we obtain a stronger result when the prescribed boundary traces $t\in\CC^n$ satisfy a natural genericity condition. Following \cite[Condition 4.3]{simpson2016dual}, based on \cite{Kostov}, we say that $t=(t_1,\ldots,t_n)\in\CC^n$ is \emph{Kostov generic} if, writing
$t_i=\lambda_i+\lambda_i^{-1}$ for all $1\le i\le n$,
there is no choice of signs $\epsilon_1,\ldots,\epsilon_n\in \{\pm1\}$ such that
$
\prod_{i=1}^n \lambda_i^{\epsilon_i}=1$. 
By \cite[Proposition 4.5]{simpson2016dual}, Kostov genericity implies that the relative character variety $\mathrm{Ch}_{g,n}^{SL(2,\CC),t}$ is smooth.

\begin{citedthm}
\label{thm_intro_A}
Conjecture \ref{conj_main} holds for $G=SL(2,\CC)$. More precisely:
\begin{itemize}
\item[(i)] \emph{(Compact case)}
For every integer $g\geq 0$, the $SL(2,\CC)$-character variety $\mathrm{Ch}_g^{SL(2,\CC)}$ admits a dlt log Calabi--Yau compactification.

\item[(ii)] \emph{(Punctured case)}
For all integers $g\geq 0$ and $n>0$, and every $t\in\CC^n$, the relative $SL(2,\CC)$-character variety $\mathrm{Ch}_{g,n}^{SL(2,\CC),t}$ admits a log canonical log Calabi--Yau compactification. If, moreover, $t$ is Kostov generic, then $\mathrm{Ch}_{g,n}^{SL(2,\CC),t}$ admits a dlt log Calabi--Yau compactification.
\end{itemize}
\end{citedthm}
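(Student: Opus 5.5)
The plan follows the strategy announced in the abstract. I will first prove a general criterion: let $X=\Spec A$ be a normal affine variety with $K_X\sim 0$, and let $F_{\le \bullet}$ be an increasing, exhaustive, multiplicative $\ZZ_{\ge 0}$-filtration of $A$ with finitely generated Rees algebra $R=\bigoplus_k F_{\le k}\,u^k$. Let $Y=\Proj R$ and $D=Y\setminus X=V(u)$, so that $D\simeq \Proj \gr_F A$.

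The criterion asserts that $(Y,D)$ is log canonical and log Calabi--Yau under two hypotheses:
\begin{itemize}
\item[(a)] the associated graded algebra $\gr_F A$ is reduced, so that $D$ is a reduced divisor and $Y$ is normal along $D$ by Serre's criterion ($R$ is then normal, being a flat degeneration with reduced, $S_2$ central fiber);
\item[(b)] $\gr_F A$ is itself log Calabi--Yau in a suitable sense, for instance a Stanley--Reisner-type or toric-type degeneration whose components are glued along their boundaries as in a log canonical pair, and the volume form on $X$ has order $-1$ with respect to the valuations defined by the filtration.
\end{itemize}
The intended argument for log canonicity is inversion of adjunction. Since $D$ is Cartier ($D=V(u)$ on the affine cone), adjunction gives $(K_Y+D)|_D=K_D$. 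Log canonicity of $D$, which holds for seminormal degenerations of the right type, then gives log canonicity of $(Y,D)$ near $D$, by Kollár's inversion of adjunction for slc $D$. Away from $D$, I use the canonical singularities of $X$. Finally, $K_Y+D\sim 0$ follows by extending $\Omega$ and checking, via the valuation $\mathrm{ord}_D$ induced by the filtration degree, that $\Omega$ has a simple pole along each component of $D$.

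Next I apply the criterion to the compactifications of Kutteri--Tehrani--Frohman and Tehrani--Frohman. These are defined by the filtration of the skein algebra at $q=1$, i.e.\ of the coordinate ring of the character variety, by the intersection number with a pants decomposition via Dehn--Thurston coordinates. By their work, $\gr_F A$ is the monoid algebra of the integral cone of Dehn--Thurston coordinates, modulo sign/parity relations. This is a (quotient of a) toric or Stanley--Reisner ring, which is reduced and whose Proj is a union of toric pieces glued along toric strata, hence slc with $K_D\sim 0$. I take the volume form to be the Goldman symplectic form raised to the top power (in the relative case, on the symplectic leaves). Its order along the boundary valuation can be computed in Fenchel--Nielsen-type coordinates given by trace functions $x_i$ of pants curves and their twists, where the form looks like $\prod d\log$, giving order exactly $-1$.

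This yields log canonical log Calabi--Yau compactifications in all cases. To upgrade to dlt, I invoke Lemma \ref{lem_dlt_modifications}. In the compact case, $\mathrm{Ch}_g^{SL(2,\CC)}$ is $\QQ$-factorial with canonical singularities \cite{shu2024singularities}. In the Kostov generic case, $\mathrm{Ch}_{g,n}^{SL(2,\CC),t}$ is smooth by \cite[Proposition 4.5]{simpson2016dual}, hence $\QQ$-factorial and canonical.

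I expect the main obstacle to be verifying hypothesis (b): identifying $\gr_F A$ precisely, including reducedness given the parity constraints of Dehn--Thurston coordinates, and controlling the pole order of $\Omega$ along every boundary component uniformly. I would first try to handle this by a torus-equivariant degeneration argument, reducing to the toric case where everything is explicit.
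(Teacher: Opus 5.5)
Your top-level reduction matches the paper's. You first obtain a log canonical log Calabi--Yau compactification from the Kutteri--Tehrani--Frohman and Tehrani--Frohman filtrations. You then apply Lemma~\ref{lem_dlt_modifications}, using that $\mathrm{Ch}_g^{SL(2,\CC)}$ is $\QQ$-factorial with canonical singularities, or that $\mathrm{Ch}^{SL(2,\CC),t}_{g,n}$ is smooth for Kostov generic $t$. The gap is in the substantive step: showing that $(Y_\cP,D_\cP)$ and $(Y_\Delta,D_\Delta)$ are log canonical with $K_Y+D\sim 0$. There are three problems.

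\textbf{(1) The pole order of the Goldman form is not established.} Your criterion takes as hypothesis (b) exactly the pole-order statement that must be proved, and its verification for the Goldman form via Fenchel--Nielsen coordinates does not work as stated.
\begin{itemize}
\item The components $D_\ell$ are indexed by the maximal cones $\Lambda_\ell$ of the image of $\iota_\cP$ (resp.\ $\iota_\Delta$), and there are many of them.
\item Complex Fenchel--Nielsen coordinates (eigenvalues and exponentiated twists) are not regular functions on $X$.
\item A $\prod d\log$ expression in one chart computes $\mathrm{ord}(\Omega)$ only along valuations that are monomial in that chart. A non-monomial divisor at infinity can have $\mathrm{ord}\geq 0$.
\end{itemize}
The paper itself leaves open whether the log Calabi--Yau volume form agrees with the Goldman form.

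\textbf{(2) The topological input is missing.} The claim ``a union of toric pieces glued along toric strata, hence slc with $K_D\sim 0$'' is false in general. You need cones to meet in cones and $h^{-1}(1)$ to be a sphere; here this comes from Thurston's theorem that $MF$ is a cone over a sphere. This is what gives Cohen--Macaulayness ($S_2$), normal crossings in codimension one, and an orientation trivializing the dualizing sheaf. You never use it.

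\textbf{(3) The inversion of adjunction step is not set up.}
\begin{itemize}
\item $D=V(u)$ is Cartier on the spectrum of the Rees algebra, but on $Y$ it is only a Weil divisor, since $\mathrm{ord}_{D_i}(u)=1/d_i$.
\item Inversion of adjunction needs $K_Y+D$ to be $\QQ$-Cartier beforehand, so this step depends on (1).
\item ``Away from $D$ use canonical singularities of $X$'' is unavailable for non-generic $t$, where canonicity of $\mathrm{Ch}^{SL(2,\CC),t}_{g,n}$ is not known. Log canonicity near $D$ alone still allows isolated non-lc points in the affine $X$.
\end{itemize}

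\textbf{What the paper does instead.} The missing idea is to obtain both properties at once from a degeneration of the whole pair, with no explicit volume form and no prior knowledge of the singularities of $X$.
\begin{itemize}
\item Base-change the Rees family along $t=xy$ and set $\cY=\Proj_s\overline{\mathrm{Rees}}_\cF(R)\to\A^1_x$.
\item Its general fiber is $(Y,D)$. Its central fiber is the compactified vertex: the stable toric variety over the ball $\{h\le 1\}$, with boundary the stable toric variety over the sphere $\{h=1\}$.
\item The ball topology and Alexeev's results give that $Y_0$ is Cohen--Macaulay with $H^1(\cO_{Y_0})=0$, that $(Y_0,D_0)$ is slc, and that the orientation trivializes $\omega_{Y_0}(D_0)$.
\item These properties deform (Lemma~\ref{lem_central_general_technical}). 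Cohen--Macaulayness is open, and $\omega_{\cY/\A^1}(\cD)$ becomes invertible and then trivial because $H^1=0$. Koll\'ar's criterion for families with invertible $\omega(\cD)$ gives slc general fibers, hence lc since $Y$ is normal.
\end{itemize}
This yields log canonicity on all of $Y$, including $X$, together with $K_Y+D\sim 0$.

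Finally, the Kutteri--Tehrani--Frohman construction needs $g\geq 2$, and the Tehrani--Frohman construction needs $2g-2+n>0$. The remaining cases must be treated separately: $g=0$ is trivial, and the paper cites the known $g=1$ result.
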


The proof of Theorem \ref{thm_intro_A} relies on a detailed analysis of compactifications of $SL(2,\CC)$ character varieties recently constructed by Kutteri--Tehrani--Frohman \cite{ayilliath2025geometric} in the compact case and by Tehrani--Frohman \cite{farajzadeh2023compactifications} in the punctured case. More precisely, for every integer $g\geq 2$ and every choice $\cP=(P,\Gamma)$ of a pair of pants decomposition $P$ of $\bS_g$ together with an embedded dual graph $\Gamma$, Kutteri--Tehrani--Frohman construct a compactification $(Y_\cP,D_\cP)$ of the character variety $\mathrm{Ch}_g^{SL(2,\CC)}$. Similarly, for every $g\geq 0$ and $n>0$ with $2g-2+n>0$, and every ideal triangulation $\Delta$ of $\bS_{g,n}$, Tehrani--Frohman construct a compactification $(Y_\Delta,D_\Delta)$ of the relative character variety $\mathrm{Ch}_{g,n}^{SL(2,\CC),t}$.

Our next main result establishes the key geometric properties of these compactifications, showing that they are log canonical and log Calabi--Yau. This result, which provides the main geometric input for the proof of Theorem \ref{thm_intro_A}, is stated here as Theorem \ref{thm_intro_B} and proved in the main body as Theorem \ref{thm_main_B}.

\begin{citedthm}
\label{thm_intro_B} The following hold:
\begin{itemize}
\item[(i)] \emph{(Compact case)}
Let $g\geq 2$, and let $\cP=(P,\Gamma)$ consist of a pair of pants decomposition $P$ of $\bS_g$ together with an embedded dual graph $\Gamma$. Then the compactification $(Y_\cP,D_\cP)$ of $\mathrm{Ch}_g^{SL(2,\CC)}$ is log canonical and log Calabi--Yau.

\item[(ii)] \emph{(Punctured case)}
Let $g\geq 0$ and $n>0$ satisfy $2g-2+n>0$, let $t\in\CC^n$, and let $\Delta$ be an ideal triangulation of $\bS_{g,n}$. Then the compactification $(Y_\Delta,D_\Delta)$ of $\mathrm{Ch}_{g,n}^{SL(2,\CC),t}$ is log canonical and log Calabi--Yau.
\end{itemize}
\end{citedthm}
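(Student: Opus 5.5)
The plan is to deduce Theorem \ref{thm_intro_B} from the general criterion announced in the abstract. That criterion concerns compactifications of an affine variety built from a filtration of its coordinate ring, and gives conditions under which the compactification is log canonical and log Calabi--Yau. Concretely, let $X=\Spec A$ be normal, and let $F_{\le \bullet}$ be an increasing, exhaustive $\ZZ_{\ge 0}$-filtration of $A$ by finite-dimensional subspaces. Form the Rees algebra $R=\bigoplus_k F_{\le k}\, u^k$ and set $Y=\Proj R$. The boundary $D=Y\setminus X=\{u=0\}$ is then $\Proj \gr_F A$. Both constructions of Kutteri--Tehrani--Frohman and Tehrani--Frohman are of this type. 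In each, the filtration is given by a degree on a distinguished basis: products of trace functions attached to the curves of $P$ and $\Gamma$ in the compact case, and the Dehn--Thurston or shear-type coordinates attached to the edges of $\Delta$ in the punctured case. In both cases $\gr_F A$ is identified with an explicit algebra that is essentially a (quotient of a) monoid algebra of integer points of a rational polyhedral cone. In the compact case there are extra parity constraints coming from the $SL(2)$ trace identities.

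I would prove the general criterion in three steps.

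\emph{Step 1: $D$ is reduced.} Since $D$ is the Cartier divisor $u=0$ on $Y$, it is reduced if $\gr_F A$ is reduced. Granting that $\gr_F A$ is a normal domain (in our cases a normal semigroup ring or a finite quotient of one), $D$ is reduced. Moreover $R$ is normal, because $R/uR\cong \gr_F A$ is normal and $R[u^{-1}]=A[u^{\pm 1}]$ is normal (Serre's criterion). Hence $Y$ is normal.

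\emph{Step 2: $K_Y+D\sim 0$.} Take a nowhere vanishing volume form $\Omega$ on the smooth locus of $X$. For the character varieties this is the Goldman symplectic form to the top power in the compact case, and its analogue on the symplectic leaves in the relative case. It suffices to show that $\Omega$ has a simple pole along each component of $D$. I would compute this on the affine charts $\{f\neq 0\}$ of $Y$ for homogeneous $f$: there the residue of $\Omega$ should be the natural torus-invariant volume form on the toric boundary $\Proj \gr_F A$. Concretely, I would check that $\Omega$ degenerates to the log volume form $\bigwedge d\log$ of the toric coordinates. This is exactly what happens for the Goldman Poisson bracket in Fenchel--Nielsen-type (Dehn--Thurston) coordinates, where the leading term of the bracket is log-canonical.

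\emph{Step 3: log canonicity.} Here I would use inversion of adjunction. Since $D$ is Cartier and $K_Y+D$ is Cartier, $(Y,D)$ is log canonical near $D$ as soon as the normalization $(D^\nu,\mathrm{Diff})$ is log canonical. The boundary $D=\Proj \gr_F A$ is a toric pair, or a finite quotient of one, with its toric boundary, and such pairs are log canonical. Away from $D$, $X$ itself is log canonical with $K_X\sim 0$: in the compact case it has canonical singularities by \cite{shu2024singularities}. In the punctured case with arbitrary $t$, I would either show directly that the degeneration to a toric variety forces log canonicity of $X$, or use that a deformation to the cone over the toric boundary forces it. A cleaner route is to apply inversion of adjunction on the cone $\Spec R$: it is a Gorenstein family whose special fibre $\Spec \gr_F A$ is a log canonical toric variety. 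This gives lc for the total space, hence for $(Y,D)$.

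The main obstacle is identifying $\gr_F A$ precisely as a normal (twisted) monoid algebra, including the trace-identity parity conditions, and checking that $\Omega$ degenerates to the toric log form. Everything else is formal. For the first point I would rely on the bases and product formulas (skein relations) established by Kutteri--Tehrani--Frohman and Tehrani--Frohman: leading terms of products of basis elements multiply like monomials, so $\gr$ is the semigroup algebra of admissible colorings.
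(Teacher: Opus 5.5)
Your proposal has a genuine gap, and it starts with the structure of the associated graded algebra. In both constructions the leading terms do not multiply like monomials globally. One has $\overline{T}_\gamma\overline{T}_{\gamma'}=0$ unless the coordinate vectors of $\gamma$ and $\gamma'$ lie in a common maximal cone $\Lambda_\ell$ of the piecewise linear structure on measured foliations. So $\gr_F A$ is neither a domain nor a (twisted or finite-quotient) monoid algebra of a single cone. It is the algebra $\kk[\Omega]$ of a cone complex, obtained by gluing the monoid algebras $\kk[Q_\ell]$ along faces. Accordingly, $\Spec\gr_F A$ is a reducible, non-normal union of affine toric varieties. Likewise, $D=\Proj\gr_F A$ is a union of projective toric varieties glued along toric strata, namely the stable toric variety of the sphere $h^{-1}(1)$, and not a toric pair.

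This breaks Step 1 as stated. Normality of $Y$ can still be rescued, since your Serre-criterion argument only needs $\gr_F A$ to be reduced, which $\kk[\Omega]$ is. It also breaks Step 3. Inversion of adjunction would require $K_Y+D$ to be $\QQ$-Cartier, which you only get from Step 2. It would also require computing the different on $D^\nu$, a disjoint union of toric varieties. That different depends on the singularities of $Y$ along the double locus of $D$ and is not automatic. Your ``cleaner route'' through $\Spec$ of the Rees algebra has the same defect. Its special fibre is the non-normal vertex $\Spec\kk[\Omega]$, and you give no reason why it is semi log canonical or why the total space is Gorenstein.

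The second gap is Step 2. It is not an available result that the Goldman volume form has a first order pole along every component of $D$ and degenerates to toric log forms; the paper explicitly leaves this comparison open. Moreover, Fenchel--Nielsen and Dehn--Thurston parameters are not algebraic coordinates on $X$, so the proposed leading-term computation has no footing.

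More fundamentally, your argument never uses the hypothesis that makes the theorem true: the support of the cone complex is homeomorphic to a cone over a sphere. The paper's proof is built on this. The canonical degeneration is obtained by base change of the Rees family along $t=xy$ and then taking $\Proj$ in $y$. It degenerates $(Y,D)$ to the compactified vertex $(Y_0,D_0)$, the stable toric variety of the ball $\{h\le 1\}$ whose boundary is the sphere $h^{-1}(1)$. Since a ball is locally Cohen--Macaulay with $H^1=0$, Alexeev's results show that $Y_0$ is Cohen--Macaulay with $H^1(Y_0,\cO_{Y_0})=0$. Each interior codimension-one cell lies in exactly two maximal cells, which gives normal crossings in codimension one. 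An orientation of the ball glues the toric log forms into a trivialization $\omega_{Y_0}(D_0)\simeq\cO_{Y_0}$, so $(Y_0,D_0)$ is semi log canonical.

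A deformation argument then transfers these properties to the general fibre. It follows GHKK's Lemma 8.33 and uses Koll\'ar's openness of semi log canonicity once $\omega_{\cY/\A^1}(\cD)$ is shown to be trivial. This gives Cohen--Macaulayness, $\omega_Y(D)\simeq\cO_Y$ and semi log canonicity of $(Y,D)$, and normality of $Y$ upgrades slc to lc. The result is $K_Y+D\sim 0$ without exhibiting any volume form. It also covers every $t$ in the punctured case with no input on the singularities of $X$.
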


As explained in \S\ref{sec_lcy_character}, Theorem \ref{thm_intro_A} follows from Theorem \ref{thm_intro_B} together with general results from the Minimal Model Program that allow one to pass from log canonical to dlt log Calabi--Yau compactifications -- see Lemma \ref{lem_dlt_modifications}. The proof of Theorem \ref{thm_intro_B} is an application of Theorem \ref{thm_intro_C} below, referred to as Theorem \ref{thm_main_C} in the main body of the paper, that provides a general criterion of independent interest for showing that certain compactifications are log canonical and log Calabi--Yau.

The compactifications $(Y_\cP,D_\cP)$ and $(Y_\Delta,D_\Delta)$ are constructed in \cite{ayilliath2025geometric, farajzadeh2023compactifications} as Proj of Rees algebras associated with filtrations of the corresponding algebras of regular functions. As reviewed in \S\ref{sec_compactifications_SL2}, their associated graded algebras admit a simple combinatorial description as gluings of monoid algebras of integral points in the cones of a cone complex whose underlying topological space is homeomorphic to a cone over a sphere. Theorem \ref{thm_intro_C} shows that these properties are sufficient to guarantee that the resulting compactifications are log canonical and log Calabi--Yau.

To state the theorem, we fix an algebraically closed field $\kk$ of characteristic zero. Projective filtrations and their associated projective compactifications, as well as degree-like functions and subdegrees, are reviewed in \S\ref{sec_filtrations}, while the algebra $\kk[\Omega]$ associated with a cone complex $\Omega$ is recalled in \S\ref{sec_algebras_cone}.

\begin{citedthm}
\label{thm_intro_C}
Let $\cF=(F_d)_{d\geq 0}$ be a projective filtration on a $\kk$-algebra $R$, and let $(Y,D)$ be the associated projective compactification of $X=\Spec(R)$. Assume that:
\begin{itemize}
\item[(i)] $R$ is an integrally closed domain, equivalently, $X$ is a normal variety.
\item[(ii)] The degree-like function $\delta$ defining $\cF$ is a subdegree.
\item[(iii)] There exist a cone complex $\Omega$ and a non-negative integral piecewise linear function $ h:|\Omega|\longrightarrow \RR_{\geq 0}$ such that the intersection of any two cones of $\Omega$ is a cone of $\Omega$, the level set $h^{-1}(1)$ is homeomorphic to a sphere, and the algebra $\kk[\Omega]$, endowed with the grading induced by $h$, is isomorphic to $\gr_{\cF}(R)$ as a $\ZZ_{\geq 0}$-graded algebra.
\end{itemize}
Then:
\begin{itemize}
\item[(i)] $Y$ is a normal projective variety and $D$ is a reduced effective Weil divisor on $Y$,
\item[(ii)] $Y$ is Cohen--Macaulay and $(Y,D)$ is log canonical, and
\item[(iii)] $(Y,D)$ is log Calabi--Yau, that is, $K_Y+D\sim 0$.
\end{itemize}
\end{citedthm}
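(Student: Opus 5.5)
The plan is to work with the Rees algebra $\widetilde R=\bigoplus_{d\ge0}F_d\,u^d$, so that $Y=\Proj(\widetilde R)$ and $D=V(u)$. Two standard facts drive everything: $\widetilde R[u^{-1}]=R[u,u^{-1}]$ and $\widetilde R/u\widetilde R\cong \gr_{\cF}(R)$.

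\textbf{Step 1: reduce to properties of $\kk[\Omega]$.} By hypothesis (iii), $\gr_\cF(R)\cong\kk[\Omega]$. I will then invoke the known facts (Stanley, Hochster, Yuzvinsky) about the algebra of a cone complex whose cones meet in faces. Since $h^{-1}(1)$ is a sphere, the complex is a Gorenstein* cell complex, so $\kk[\Omega]$ is Cohen--Macaulay, reduced and Gorenstein, with canonical module $\omega_{\kk[\Omega]}\cong\kk[\Omega]$ of $a$-invariant $0$. Concretely, the interior ideal equals $\kk[\Omega]$ shifted by $0$, because every integral point lies in the relative interior of exactly one cone and the link is a sphere.

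\textbf{Step 2: deform to $\widetilde R$.} Because $u$ is a nonzerodivisor and $\widetilde R/u\widetilde R$ is Cohen--Macaulay and Gorenstein, $\widetilde R$ is Cohen--Macaulay and Gorenstein near $V(u)$; graded Nakayama then extends this to all of $\widetilde R$. Gorenstein-ness deforms with the shift: $\omega_{\widetilde R}\cong\widetilde R(a)$ and $\omega_{\widetilde R/u}\cong(\omega_{\widetilde R}/u\,\omega_{\widetilde R})(1)$, which forces $a=-1$.

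For normality, I use hypothesis (ii). A subdegree means $\delta$ is the minimum of finitely many divisorial valuations, and this should make $\widetilde R$ integrally closed in $R[u,u^{-1}]$. The other half is $R_1$ along $u=0$: $\widetilde R/u$ is reduced, hence generically regular at the minimal primes, which correspond to the maximal cones. So $\widetilde R_{\mathfrak p}$ is a DVR at height-one primes containing $u$. Away from $u$, normality is hypothesis (i). Combining $R_1$ with Cohen--Macaulay ($S_2$) gives normality of $\widetilde R$, hence of $Y$. Since $\gr$ is reduced, $D$ is reduced, giving conclusion (i). Cohen--Macaulayness of $Y$ follows from that of $\widetilde R$.

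\textbf{Step 3: the log Calabi--Yau property.} From $\omega_{\widetilde R}\cong\widetilde R(-1)$ we get $\omega_Y\cong\cO_Y(-1)$. Here $\cO_Y(1)$ is the sheaf associated with the degree shift, and the section $u$ shows $\cO_Y(1)\cong\cO_Y(D)$ as divisorial sheaves. Hence $K_Y+D\sim0$, giving (iii). A subtlety: $\Proj$ of a non-standard-graded ring may need a Veronese argument. I would pass to $\widetilde R^{(m)}$ with care, or check on the charts $D_+(s)$ for homogeneous $s$.

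\textbf{Step 4: log canonicity.} Inversion of adjunction along the Cartier-in-codimension-one, $\QQ$-Gorenstein divisor $D$ reduces log canonicity of $(Y,D)$ near $D$ to semi-log-canonicity of $D$ with its different. Here $D=\Proj\kk[\Omega]$ is Gorenstein with $\omega_D\cong\cO_D$, and it is glued from toric varieties along toric strata, seminormal and with normal crossings in codimension one (the link is a manifold). Such a glued union of toric pairs is slc, with all components meeting in toric boundaries. Then inversion of adjunction (Kawakita's version for non-normal $D$, or the slc statement of Kollár) gives that $(Y,D)$ is lc near $D$. Away from $D$, $Y\setminus D=X$. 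So I also need $X$ to be log canonical. Here I would argue via the degeneration: $\Spec\widetilde R\to\A^1$ has lc central fiber $(\Spec\kk[\Omega]$, with its boundary$)$, and the affine cone over it is lc, so nearby fibers are lc by inversion of adjunction on the total space. Equivalently, lc-ness of $Y$ along $D$ propagates since every lc center issue in $X$ would give a non-lc locus whose closure meets $D$, as $X$ is affine and $Y$ projective.

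The main obstacle is Step 4, specifically checking that $\kk[\Omega]$ is slc and that the non-lc locus of $(Y,D)$ cannot avoid $D$. For the latter I would use connectedness: a closed non-lc subset of $Y$ disjoint from the ample-ish $D$ is impossible, since $D$ supports an ample divisor and positive-dimensional closed sets meet it; isolated points need the cone argument above.
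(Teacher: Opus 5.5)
Your route differs from the paper's, and it works in outline. The paper never computes the canonical module of the Rees algebra. It builds the \emph{projective} canonical degeneration $\cY=\Proj_s\overline{\mathrm{Rees}}_\cF(R)\to\A^1$, obtained by base change along $t=xy$, and identifies its central fiber with the compactified vertex $P[\Delta(\Omega,h)]$, a stable toric variety over a polyhedral ball. From Alexeev's results and an orientation-gluing argument it gets Cohen--Macaulayness, $H^1(\cO)=0$, semi log canonicity and $\omega_{Y_0}(D_0)\cong\cO_{Y_0}$, and transports these to $(Y,D)$ via Lemma~\ref{lem_central_general_technical} (relative Picard scheme, Koll\'ar's slc-in-families theorem). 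Normality of $Y$ is quoted from Mondal.

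You instead exploit that $\Spec\widetilde R$ is simultaneously the affine cone over $Y$ and the degeneration of $X$ to the vertex $\Spec\kk[\Omega]$, because $u$ is homogeneous of degree one; so one graded hyperplane-section argument does both jobs. This removes $H^1$ and Picard schemes. It gives normality of $Y$ by $R_1+S_2$ from (i) and (iii) alone, shows that $\widetilde R$ (hence $X$) is Gorenstein, and turns $K_Y+D\sim0$ into $a(\widetilde R)=-1$.

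The cost is twofold. Log canonicity no longer falls out of a proper family and must be proved separately. You also need the \emph{graded canonical module} of $\kk[\Omega]$, which the Yuzvinsky--Stanley Cohen--Macaulay criterion does not provide. Getting it requires the orientation-gluing plus $S_2$ argument of Lemma~\ref{lem_2}(iii) run on $\Spec\kk[\Omega]$, and the remark that the glued log volume form is torus-invariant, hence of $h$-degree $0$.

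Three steps need repair.

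(1) In Step 3, $\cO_Y(1)\cong\cO_Y(D)$ is false once some $d_i>1$, as happens in Example~\ref{Example:toric1}. By Proposition~\ref{prop_proj_comp}, the sheaf attached to $\widetilde R(k)$ is $\cO_Y(\lfloor kE\rfloor)$ with $E=\sum_i\frac{1}{d_i}D_i$, so $\cO_Y(1)=\cO_Y(\sum_{d_i=1}D_i)$. What holds, and suffices, is $\cO_Y(-1)=\cO_Y(\lfloor -E\rfloor)=\cO_Y(-D)$. You also need that the sheaf attached to $\omega_{\widetilde R}$ is $\omega_Y$; this is the degree-zero part of Watanabe's formula, whose correction term $\sum_i\frac{d_i-1}{d_i}D_i$ rounds down to $0$.

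(2) For Kawakita's inversion of adjunction, $D$ being slc intrinsically is not enough. You must show that the different on $D^\nu=\bigsqcup_i D_i$ is exactly the conductor. This follows from two facts:
the adjunction $\omega_Y(D)|_D\cong\omega_D$, which is the sheaf form of $\omega_{\widetilde R/u\widetilde R}\cong(\omega_{\widetilde R}/u\,\omega_{\widetilde R})(1)$ and holds since $Y$ is Cohen--Macaulay and $K_Y+D$ is Cartier;
and $D$ being nodal in codimension one. The conductor is the full toric boundary of each $D_i$, because every facet of a maximal cone is shared when $h^{-1}(1)$ is a sphere.

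(3) For the interior, ``nearby fibers are lc'' does not reach $u=1$, since $\Spec\widetilde R\to\A^1$ is not proper. The missing observation is that the non-lc locus of $(\Spec\widetilde R,V(u))$ is closed and $\G_m$-stable, while every $\G_m$-orbit closure contains the vertex $\widetilde R_+\in V(u)$. Inversion of adjunction along $V(u)$ (again with the different equal to the conductor) then makes the pair lc everywhere. This gives lc of $X$, and in fact of $(Y,D)$ directly: the punctured cone is a Seifert $\G_m$-bundle over $Y$ ramified only along $D$, so $(\Spec\widetilde R\setminus\{\widetilde R_+\},V(u))$ is lc iff $(Y,D)$ is. With this, your separate ampleness argument on $Y$ becomes unnecessary.
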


The proof of Theorem \ref{thm_intro_C} is based on a degeneration of $(Y,D)$ whose central fiber is a union of toric varieties. This degeneration is constructed as the Proj of a doubly graded algebra and admits a geometric interpretation as a base change along the map $t \mapsto xy$, following a construction previously used by the first author in \cite[Theorem 3.2]{Arguzcorals}. We show that the central fiber is semi log canonical and log Calabi--Yau, and then use a deformation argument to deduce that $(Y,D)$ is log canonical and log Calabi--Yau. Degenerations to unions of toric varieties have played an important role in mirror symmetry \cite{AAB2024ksba, GHK1, GHKK, GHS, GS2019intrinsic, HKY20, keel2024log}, where they arise in constructions of mirror varieties as deformations of such unions. Our proof draws on techniques and results developed in this setting, particularly those of \cite{GHKK, keel2024log}.

\subsection{Outline of the paper} 

In \S\ref{sec_filtrations}, following \cite{mondal2014projective}, we review the general framework for constructing projective compactifications of affine varieties from filtrations of their algebras of regular functions. In \S\ref{sec_compactifications_SL2}, we recall how this framework is used in \cite{ayilliath2025geometric, farajzadeh2023compactifications} to construct compactifications of $SL(2,\CC)$ character varieties.

In \S\ref{sec_algebras_cone}, we introduce the algebras associated with cone complexes. This leads, in \S\ref{sec_lcy_statement}, to the statement of Theorem~\ref{thm_intro_C}, our general criterion for obtaining log Calabi--Yau compactifications from filtrations. We apply this criterion to the compactifications of character varieties in \S\ref{sec_lcy_character}, proving Theorems~\ref{thm_intro_A} and \ref{thm_intro_B}.

The remainder of the paper is devoted to the proof of Theorem~\ref{thm_intro_C}. In \S\ref{section_degeneration}, we construct canonical degenerations associated with compactifications arising from filtrations, and in \S\ref{section_vertices}, we analyze their central fibers under the hypotheses of Theorem~\ref{thm_intro_C}. In \S\ref{section_extending}, we show that the relevant properties of the central fiber extend to the general fiber. The proof of Theorem~\ref{thm_intro_C} is completed in \S\ref{section_end_proof}.

\subsection{Related works and future directions}

\subsubsection{Compactifications of character varieties}

In the punctured case, for $n>0$, log Calabi--Yau compactifications of $\mathrm{Ch}_{g,n}^{SL(2,\CC),t}$ were constructed in \cite{MR4157427}. However, the singularities of these compactifications do not appear to have been studied; in particular, it is not known whether they are log canonical.
In the compact case, for $g=1$, dlt log Calabi--Yau compactifications of $\mathrm{Ch}_1^{GL(k,\CC)}$ and $\mathrm{Ch}_1^{SL(k,\CC)}$ were constructed for all $k\geq 1$ in \cite[Theorem B]{mauri2022geometric}. Recall that $\mathrm{Ch}_1^{GL(k,\CC)}$ is simply $\mathrm{Sym}^k((\CC^\star)^2)$. In this case, a log canonical log Calabi--Yau compactification can be obtained directly: for any smooth projective toric compactification $Z$ of $(\CC^\star)^2$, the symmetric product $\mathrm{Sym}^k(Z)$ provides such a compactification of $\mathrm{Sym}^k((\CC^\star)^2)$.
Theorem \ref{thm_intro_A} gives the first examples of log Calabi--Yau compactifications of $\mathrm{Ch}_g^{SL(2,\CC)}$ for $g\geq 2$, as well as the first log canonical log Calabi--Yau compactifications of $\mathrm{Ch}_{g,n}^{SL(2,\CC),t}$ beyond the elementary cases $(g,n)=(1,1)$ and $(g,n)=(0,4)$ discussed at the end of \S\ref{sec_intro_log_symplectic}.

\subsubsection{Dual complexes and the geometric $P=W$ conjecture}
Conjecture \ref{conj_main} is part of a broader program aimed at understanding the geometry at infinity of character varieties \cite{KNPS, mauri2022geometric}. One expects, in particular, the dual complex of any dlt log Calabi--Yau compactification of a character variety to be homeomorphic to a sphere \cite[Conjecture 1.3.1]{mauri2022geometric}. The geometric $P=W$ conjecture goes further, predicting a natural identification of this dual complex with the sphere at infinity in the base of the Hitchin fibration, compatible with the non-abelian Hodge correspondence; see \cite{KNPS} and \cite[Conjecture 4.2.7]{mauri2022geometric} for further details. Theorem~\ref{thm_intro_A} provides a first step toward investigating these predictions for $SL(2,\CC)$ character varieties.

\subsubsection{The Goldman volume form} 
Given a log Calabi--Yau compactification $(Y,D)$ of a normal affine variety $X$, there is, up to a nonzero scalar, a unique algebraic volume form on the smooth locus of $X$ with first order poles along every irreducible component of $D$. On the other hand, the smooth loci of character varieties of compact surfaces and of relative character varieties of punctured surfaces carry a natural algebraic volume form, the \emph{Goldman volume form}, obtained as the top exterior power of a Goldman symplectic form \cite{AMM, atiyahbott, boalch, FR, goldman, GHJW, sikora}. While the Goldman symplectic form depends on the choice of a nondegenerate symmetric $\operatorname{Ad}$-invariant bilinear form on the Lie algebra of $G$, its top exterior power is unique up to an overall scalar. It is natural to expect that the volume forms arising from the log Calabi--Yau compactifications of Theorems~\ref{thm_intro_A}--\ref{thm_intro_B} coincide, up to scale, with the Goldman volume form. A natural refinement of this expectation is that the Goldman symplectic form itself has first order poles along every irreducible component of $D$. This would in particular show that character varieties admit log symplectic compactifications, as predicted in \cite[Conjecture 5.4]{mauriintro}. We leave these questions for future investigation.

\subsection{Acknowledgments} 
This paper was completed during the authors’ stay at the Max Planck Institute for Mathematics in Bonn in July 2026, and they thank the Institute for its hospitality and excellent working environment. They also greatly appreciate Dominic Joyce’s valuable comments and suggestions. The second author acknowledges the support of a Sloan Research Fellowship from the Alfred P. Sloan Foundation. For the purpose of open access, the authors have applied a CC BY public copyright licence to any author accepted manuscript arising from this submission.

\section{Compactifications from filtrations}
\label{sec_compactification}
Throughout this paper, we work over an algebraically closed field \(\kk\) of characteristic \(0\), and $\kk$-algebras are always commutative with unit. 
In \S\ref{sec_filtrations}, we review the construction of projective compactifications of affine varieties arising from filtrations on their algebras of regular functions, following primarily \cite{mondal2014projective}; see also \cite[\S 2.1]{farajzadeh2023compactifications} for a concise overview, as well as \cite{demazure1988anneaux,watanabe} for related background. We then describe in \S\ref{sec_compactifications_SL2} the projective compactifications of $SL(2,\CC)$ character varieties associated with particular filtrations, following the constructions of \cite{ayilliath2025geometric,farajzadeh2023compactifications}.

\subsection{Filtrations and projective compactifications}
\label{sec_filtrations}

\subsubsection{Filtrations, degree-like functions and Rees algebras}

\begin{definition} Let $R$ be a $\kk$-algebra. A \emph{filtration} $\mathcal{F}$ on $R$ is a family
$(F_d)_{d \in \ZZ}$
of $\kk$-vector subspaces of $R$ such that:
\begin{itemize}
    \item[(i)] \(F_d \subseteq F_{d+1}\) for all \(d \in \mathbb{Z}\).
    \item[(ii)] \(1 \in F_0 \setminus F_{-1}\).
    \item[(iii)] $R = \bigcup_{d \in \mathbb{Z}} F_d$, and
    \item[(iv)] $F_d F_{d'} \subseteq F_{d+d'}$ for all \(d,d' \in \mathbb{Z}\).
\end{itemize}
\end{definition}

Equivalently, a filtration on a \(\kk\)-algebra \(R\) is encoded by a \emph{degree-like function} on \(R\), which we now recall.

\begin{definition}
Let $R$ be a $\kk$-algebra. A \emph{degree-like function} on $R$ is a map 
\[\delta: R \longrightarrow \ZZ \cup \{-\infty\} \,\]
satisfying the following conditions:
\begin{itemize}
    \item[(i)] $\delta(0)=-\infty$ and $\delta(\kk^\star)=0$.
    \item[(ii)] $\delta(f+g) \leq \max(\delta(f),\delta(g))$, and $\delta(f)=\delta(g)$ if $\delta(f+g) < \max(\delta(f),\delta(g))$.
    \item[(iii)] $\delta(fg) \leq \delta(f) + \delta(g)$ for all $f,g \in R$.
\end{itemize}
\end{definition}

A degree-like function $\delta$ defines a filtration $\cF=(F_d)_{d\in \ZZ}$ on $R$, where $F_d :=\{ f \in R\,|\, \delta(f) \leq d \}$. 
Conversely, any filtration $\cF=(F_d)_{d \in \ZZ}$ on $R$ determines a degree-like function $\delta$ via $\delta(f):=\inf \{d \in \ZZ \,|\, f \in F_d \}$.

\begin{example}
    \label{Ex:degree like function}
Let \(R=\kk[x_1,\dots,x_n]\). The usual polynomial degree defines a degree-like function
\[ \delta(f)=\deg(f)
\]
for every nonzero polynomial \(f \in R\). In the induced filtration,
$F_d^{\delta}$ consists of polynomials of total degree at most \(d\). 
%The associated projective completion is the standard compactification
%\[
%\mathbb{A}^n \hookrightarrow \mathbb{P}^n.
%\]
\end{example}

\begin{definition}
    A filtration $\cF=(F_d)_{d\in \ZZ}$ on a $\kk$-algebra $R$ is \emph{non-negative} if $F_d =0$ for all $d<0$ and $F_0=\kk$. A non-negative filtration will usually be denoted by $\cF=(F_d)_{d\geq 0}$.
\end{definition}

\begin{definition}
    Let $\cF=(F_d)_{d \geq 0}$ be a non-negative filtration on a $\kk$-algebra $R$. The  \emph{Rees algebra} of $\cF$
    is the $\ZZ_{\geq 0}$-graded $\kk[t]$-algebra
    \begin{equation} \label{eq_rees}
    \mathrm{Rees}_\cF(R) := \bigoplus_{d \geq 0} F_d t^d \subset R[t]\,.\end{equation}
 The \emph{associated graded algebra} of $\cF$ is the $\ZZ_{\geq 0}$-graded $\kk$-algebra
\[ \mathrm{gr}_\cF(R) := \mathrm{Rees}_\cF(R)/(t) = \bigoplus_{d\geq 0} F_d /F_{d-1}  \,.\]
\end{definition}

\begin{proposition}
 Let $\cF=(F_d)_{d \geq 0}$ be a non-negative filtration on a $\kk$-algebra $R$. Then $t$ is a non-zero divisor of  $\mathrm{Rees}_\cF(R)$. Moreover, 
 \[  \mathrm{Rees}_\cF(R)  \otimes_{\kk[t]} \kk[t^\pm] \simeq R[t^\pm]\,\,\,\,\,\text{and}\,\,\,\,\,  \mathrm{Rees}_\cF(R)/(t-1) \simeq R \,. \]
\end{proposition}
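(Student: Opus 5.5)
The argument works directly with the description $\mathrm{Rees}_\cF(R)=\bigoplus_{d\geq 0}F_d t^d$ as a graded subalgebra of $R[t]$. Both isomorphisms come from localizing at $t$ and then specializing $t=1$.

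\emph{Step 1: $t$ is a non-zero divisor.} Since $R[t]$ is a polynomial ring over $R$, multiplication by $t$ on $R[t]$ is injective: it shifts the coefficient of $t^d$ to $t^{d+1}$. The Rees algebra is a subring of $R[t]$, so if $t\cdot a=0$ for some $a\in\mathrm{Rees}_\cF(R)$, then $a=0$ already in $R[t]$. We also need $t\in\mathrm{Rees}_\cF(R)$. This holds because $1\in F_0\subseteq F_1$, so $t=1\cdot t^1\in F_1t$. No domain hypothesis on $R$ is needed.

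\emph{Step 2: $\mathrm{Rees}_\cF(R)\otimes_{\kk[t]}\kk[t^\pm]\simeq R[t^\pm]$.} The left side is the localization $\mathrm{Rees}_\cF(R)[t^{-1}]$. Because $t$ is a non-zero divisor, this localization embeds into $R[t][t^{-1}]=R[t^\pm]$. It remains to check surjectivity. Let $f t^n\in R[t^\pm]$ with $f\in R$ and $n\in\ZZ$. By the exhaustiveness axiom (iii), $f\in F_d$ for some $d\geq 0$. Increasing $d$ if necessary, we may assume $d\geq n$. Then $f t^n=(f t^d)\cdot t^{n-d}$ with $ft^d\in\mathrm{Rees}_\cF(R)$, so $ft^n$ lies in the image. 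The image is a $\kk[t^\pm]$-submodule, hence it is all of $R[t^\pm]$.

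\emph{Step 3: $\mathrm{Rees}_\cF(R)/(t-1)\simeq R$.} Tensor the isomorphism of Step 2 with $\kk[t^\pm]/(t-1)\simeq\kk$, using that $t-1$ is already a unit-free element of $\kk[t]$. This gives
\[
\mathrm{Rees}_\cF(R)/(t-1)\simeq \mathrm{Rees}_\cF(R)[t^{-1}]/(t-1)\simeq R[t^\pm]/(t-1)\simeq R.
\]
The first isomorphism holds because $t$ becomes invertible modulo $t-1$, so inverting $t$ changes nothing after reducing mod $t-1$.

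Alternatively, one can argue directly with the evaluation map $\sum f_d t^d\mapsto\sum f_d$. It is surjective by (iii). For the kernel, suppose $a=\sum_{d\leq N} f_dt^d$ has $\sum f_d=0$. Then
\[
a=\sum_d f_d\,(t^d-t^N),
\]
and each $f_d(t^d-t^N)=-f_dt^d(t^{N-d}-1)$ is divisible by $t-1$ inside the Rees algebra, since $t\in\mathrm{Rees}_\cF(R)$. Hence $a\in(t-1)$.

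The only delicate point is this kernel computation, or equivalently justifying the localization identity. In both versions the key facts are that $t$ itself lies in the Rees algebra and that the filtration is increasing, so that $f_d t^N\in\mathrm{Rees}_\cF(R)$ for $N\geq d$. Everything else is formal.
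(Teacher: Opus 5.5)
Your proof is correct and gives the standard verification. The paper does not write this argument out; its proof just says ``this is standard'' and cites a reference on filtered rings. One small precision point: the injectivity of $\mathrm{Rees}_\cF(R)\otimes_{\kk[t]}\kk[t^\pm]\to R[t^\pm]$ comes from exactness of localization applied to $\mathrm{Rees}_\cF(R)\subset R[t]$, not from $t$ being a non-zero divisor in $\mathrm{Rees}_\cF(R)$. Also, the remark that $t-1$ is ``unit-free'' is superfluous, since your next sentence already gives the correct reason.
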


\begin{proof}
This is standard -- see for instance \cite[I.9.5 (i)]{MR1321145}.
\end{proof}

%Note that $T$ is a non-zero divisor in $A^\delta$, and the original algebra $A$ can be recovered as the degree-zero part of the localization $A^\delta[\frac{1}{T}]$:
%\[ A = \left(A^\delta \bigg[\frac{1}{T}\bigg] \right)^0 \,.\]

\begin{definition}
A non-negative filtration $\cF$ on a $\kk$-algebra $R$ is \emph{finitely generated} if $\mathrm{Rees}_\cF(R)$ is finitely generated as a $\kk$-algebra.
\end{definition}

\begin{proposition} \label{prop_finitely_generated}
Let $\cF=(F_d)_{d \geq 0}$ be a non-negative filtration on a $\kk$-algebra $R$. Then, $\cF$ is finitely generated if and only if $\gr_\cF(R)$ is finitely generated as a $\kk$-algebra.
\end{proposition}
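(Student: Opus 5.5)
The plan is to prove both implications by working with homogeneous generators and the identification $\gr_\cF(R)=\mathrm{Rees}_\cF(R)/(t)$. Write $A:=\mathrm{Rees}_\cF(R)=\bigoplus_{d\ge 0}F_dt^d$. It is a $\ZZ_{\geq 0}$-graded algebra with $A_0=F_0=\kk$, since $\cF$ is non-negative, and $t\in A_1$ is homogeneous of degree one.

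\emph{Forward direction.} Suppose $A$ is finitely generated as a $\kk$-algebra. Then $\gr_\cF(R)=A/(t)$ is a quotient of a finitely generated algebra, hence finitely generated. Nothing further is needed here.

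\emph{Converse.} Suppose $\gr_\cF(R)$ is finitely generated. Since it is graded, we may take finitely many homogeneous generators $\bar f_1,\dots,\bar f_m$ of degrees $d_1,\dots,d_m$, where $\bar f_i\in F_{d_i}/F_{d_i-1}$. Choose lifts $f_i\in F_{d_i}$ and set $a_i:=f_it^{d_i}\in A_{d_i}$. Let $B\subseteq A$ be the $\kk$-subalgebra generated by $t,a_1,\dots,a_m$. We show $A_d\subseteq B$ for every $d$, by induction on $d$.
\begin{itemize}
\item For $d=0$ we have $A_0=\kk\subseteq B$.
\item For $d>0$, take $ft^d$ with $f\in F_d$. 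Its class $\bar f\in F_d/F_{d-1}$ is a $\kk$-linear combination of monomials $\prod\bar f_i^{e_i}$ with $\sum e_id_i=d$.
\item Let $P$ be the same combination of the monomials $\prod a_i^{e_i}$. Then $P\in B\cap A_d$, and $ft^d-P$ maps to zero in $A/(t)$.
\item Hence $ft^d-P=t\cdot g$ for some $g\in A$. Since $ft^d-P$ is homogeneous of degree $d$ and $t$ is homogeneous, we may take $g\in A_{d-1}$: compare homogeneous components, using that $A$ is graded and $tA$ is a graded ideal.
\item By induction $g\in B$, so $ft^d\in B$.
\end{itemize}
Thus $A=B$ is finitely generated.

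The only step requiring care is that $g$ can be chosen homogeneous of degree $d-1$. One can also see this concretely: $ft^d-P=(f-p)t^d$ with $f-p\in F_{d-1}$, so $g=(f-p)t^{d-1}\in F_{d-1}t^{d-1}=A_{d-1}$, using that the Rees algebra sits inside $R[t]$ degreewise. Beyond this bookkeeping, and the use of non-negativity to get $A_0=\kk$ so that the induction starts, I do not expect any real obstacle.
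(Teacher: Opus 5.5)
Your proof is correct and follows the same route as the paper. The forward direction uses the quotient $\mathrm{Rees}_\cF(R)\to\gr_\cF(R)$, and the converse lifts homogeneous generators $\bar f_i$ to $f_it^{d_i}$ and inducts on the filtration degree, with the key inductive step $ft^d-P=t\cdot(f-p)t^{d-1}$ where $f-p\in F_{d-1}$. You also state explicitly that $t$ must be adjoined to the lifts, equivalently that the lifts generate $\mathrm{Rees}_\cF(R)$ as a $\kk[t]$-algebra; the paper's one-line sketch leaves this implicit.
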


\begin{proof}
The ``only if" direction is clear since $\gr_\cF(R)$ is a quotient of $\mathrm{Rees}_\cF(R)$.
For the ``if" direction, let  $\overline{a}_1,\dots,\overline{a}_n$ be homogeneous elements of degree $d_1, \dots, d_n$ generating $\gr_\cF(R)$. Then, by induction on the filtration degree,  $\mathrm{Rees}_\cF(R)$ is generated by $a_1 t^{d_1}, \dots, a_n t^{d_n}$, where $a_i \in F_{d_i}$ is a lift of $\overline{a}_i$ for all $1 \leq i \leq n$ -- see for instance
\cite[I.9.5 (ii)]{MR1321145}.
\end{proof}

\begin{definition}
A \emph{projective} filtration on a $\kk$-algebra $R$ is a finitely generated non-negative filtration $\cF=(F_d)_{d\geq 0}$ on $R$ such that $F_0=\kk$.
\end{definition}

\begin{example}
Let \(R=\kk[x_1,\dots,x_n]\) be equipped with the standard degree filtration \(\mathcal{F}\) from Example~\ref{Ex:degree like function}. The associated Rees algebra is
\[
\operatorname{Rees}_{\mathcal{F}}(R)
=\bigoplus_{d\geq 0}F_dt^d
\simeq
\bigoplus_{d\geq 0}\kk[t,y_1,\dots,y_n]_d,
\]
where \(y_i=x_it\) for \(1\le i\le n\), and \(\kk[t,y_1,\dots,y_n]_d\) denotes the \(\kk\)-vector space of homogeneous polynomials of degree \(d\) in the variables \(t,y_1,\dots,y_n\).
\end{example}

\subsubsection{Projective compactifications}

\begin{definition} \label{def_projective_compactification}
    Let $\cF$ be a projective filtration on a $\kk$-algebra $R$. 
    The \emph{projective compactification} of $X:=\Spec(R)$ defined by $\cF$ is the pair $(Y,D)$, where
\[Y:= \Proj\left( \mathrm{Rees}_\cF(R)\right) \,\,\,\,\,\text{and}\,\,\,\,\, D:=\Proj \left( \gr_\cF(R) \right)\,.\]
\end{definition}

\begin{proposition}
    Let  $\cF$ be a projective filtration on a $\kk$-algebra $R$, and $(Y,D)$ the corresponding projective compactification of $X=\Spec(R)$. Then, the following hold:
    \begin{itemize}
        \item[(i)] $Y$ and $D$ are projective schemes over $\kk$.
        \item[(ii)] The quotient map $\mathrm{Rees}_\cF(R) \rightarrow \gr_\cF(R)$ induces a closed embedding $D \hookrightarrow Y$.
        \item[(iii)] The open complement $Y \setminus D$ is isomorphic to $X$.
    \end{itemize}
\end{proposition}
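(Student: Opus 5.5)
The three claims follow from the standard structure of the Rees algebra $A:=\mathrm{Rees}_\cF(R)=\bigoplus_{d\geq 0}F_dt^d$ together with the properties of $t$ recorded in the preceding proposition. The plan is to use the two basic facts established there: $t\in A_1$ is a non-zero divisor, and $A[t^{-1}]\simeq R[t^\pm]$.

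For (i), $\cF$ is projective, so $A$ is a finitely generated $\kk$-algebra. Its degree-zero part is $A_0=F_0t^0=\kk$. Hence $A$ is a finitely generated graded algebra over $A_0=\kk$. Then $\Proj(A)$ is projective over $\Spec(A_0)=\Spec(\kk)$; if needed, we first pass to a Veronese subalgebra $A^{(m)}$ generated in degree one, which does not change $\Proj$. The quotient $\gr_\cF(R)=A/(t)$ is again finitely generated with degree-zero part $F_0/F_{-1}=\kk$. So $D=\Proj(\gr_\cF(R))$ is projective over $\kk$ by the same argument.

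For (ii), the surjection of graded rings $A\to A/(t)$ induces a closed immersion $\Proj(A/(t))\hookrightarrow \Proj(A)$. This is the standard fact that $\Proj$ of a graded quotient by a homogeneous ideal is the closed subscheme $V_+(tA)$. Concretely, on each basic open $D_+(f)$ with $f\in A$ homogeneous, the map is $\Spec\bigl((A/(t))_{(f)}\bigr)\to\Spec(A_{(f)})$. The source ring equals $A_{(f)}/(tA)_{(f)}$, so this map is a closed immersion, and these local immersions glue.

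For (iii), the complement of $V_+(t)$ in $\Proj(A)$ is the basic open $D_+(t)$, since $t$ is homogeneous of degree $1$. Thus
\[ Y\setminus D = D_+(t)=\Spec\bigl(A_{(t)}\bigr)=\Spec\bigl((A[t^{-1}])_0\bigr). \]
Using $A[t^{-1}]\simeq R[t^\pm]$ as graded rings, with $R$ in degree $0$, the degree-zero part is $R$. Explicitly, $ft^d/t^d\mapsto f$, and this map is bijective because $\bigcup_d F_d=R$. Hence $Y\setminus D\simeq\Spec(R)=X$.

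The main point to check is that the isomorphism $A[t^{-1}]\simeq R[t^\pm]$ respects the grading. This holds because $A\subset R[t]$ is a graded subring containing $t$. It follows that $A[t^{-1}]$ equals $\bigoplus_{d\in\ZZ}\bigl(\bigcup_k F_{d+k}\bigr)t^d=R[t^\pm]$, using exhaustiveness of the filtration. The rest is formal properties of $\Proj$.
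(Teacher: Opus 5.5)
Your proof is correct: the Veronese reduction for (i), the closed immersion $\Proj(A/tA)\hookrightarrow \Proj(A)$ for (ii), and the identification $Y\setminus D = D_+(t)=\Spec\bigl((A[t^{-1}])_0\bigr)\simeq \Spec(R)$ for (iii), using the exhaustiveness of $\cF$, are the right ingredients. The paper gives no argument of its own and simply cites \cite[Proposition 2.8]{mondal2014projective}, so your write-up is a self-contained version of the standard verification.
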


\begin{proof}
See for instance \cite[Proposition 2.8]{mondal2014projective}.
\end{proof}

As explained below, the geometry of $(Y,D)$ can be described much more precisely when the degree-like function defining the filtration is a ``subdegree'' defined as follows (see  \cite[Definition 3.3]{mondal2014projective}).

\begin{definition}
A degree-like function $\delta$ on a $\kk$-algebra $R$ is a \emph{semidegree} if $\delta(fg) =\delta(f)+\delta(g)$ for all $f,g \in R \setminus \{0\}$. More generally, $\delta$ is a \emph{subdegree} if there  exist finitely many semidegrees $\delta_1,\dots,\delta_{\ell}$ such that $\delta=\max_{1 \leq i \leq \ell} \delta_i$.
\end{definition}

\begin{proposition}
  Let $\cF$ be a projective filtration on a $\kk$-algebra $R$, and $\delta$ the corresponding degree-like function. Assume that $R$ is a domain. Then, the following are equivalent:
  \begin{itemize}
      \item[(i)] $\delta$ is a subdegree.
      \item[(ii)] $\delta(f^k)=k \delta(f)$ for all $f \in R$ and $k \in \ZZ_{\geq 0}$.
      \item[(iii)] The projective scheme $D=\mathrm{Proj} \left( \gr_\cF(R) \right)$ is reduced.
  \end{itemize}
\end{proposition}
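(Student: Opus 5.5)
The plan is to prove the cycle (i)$\Rightarrow$(ii)$\Rightarrow$(iii)$\Rightarrow$(ii)$\Rightarrow$(i). The basic tool is the dictionary between $\delta$ and $\gr_\cF(R)$. For $f\in R\setminus\{0\}$ with $d=\delta(f)$, write $\bar f$ for the class of $f$ in $F_d/F_{d-1}$. Then $\bar f^k$ is the class of $f^k$ in $F_{kd}/F_{kd-1}$. Hence $\bar f^k=0$ if and only if $\delta(f^k)<k\delta(f)$.

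\textbf{(i)$\Rightarrow$(ii).} This is immediate. If $\delta=\max_i\delta_i$ with each $\delta_i$ a semidegree, then $\delta_i(f^k)=k\delta_i(f)$ for every $i$. Taking the maximum over $i$ gives $\delta(f^k)=k\delta(f)$.

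\textbf{(ii)$\Leftrightarrow$(iii).} By the dictionary above, (ii) says exactly that $\gr_\cF(R)$ has no nonzero homogeneous nilpotents. Since the nilradical of a graded ring is homogeneous, this is the same as $\gr_\cF(R)$ being reduced, and reducedness of $\gr_\cF(R)$ gives (iii).

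For (iii)$\Rightarrow$(ii), argue by contradiction. Suppose $\bar f\neq 0$ is nilpotent, of degree $d>0$; degree $0$ is impossible because $F_0=\kk$. Reducedness of $\Proj$ makes every fraction $\bar f\bar g^{a}/\bar g^{b}$ vanish on $D_+(\bar g)$, so $\bar f$ is killed by a power of the irrelevant ideal. The task is then to rule out such torsion. Note that $\bar f\bar g=0$ means $\delta(fg)<\delta(f)+\delta(g)$. The domain hypothesis on $R$ should enter here: I would show, as in \cite{mondal2014projective}, that $\gr_\cF(R)$ has depth $\geq 1$ at the irrelevant ideal. Concretely, $t$ is a non-zero divisor on the Rees algebra, and I would pass to a Veronese subalgebra generated in a single degree. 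This torsion-freeness is the first delicate point of the proof.

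\textbf{(ii)$\Rightarrow$(i).} This step carries the content, and it is where I expect the main obstacle. Assume $\gr_\cF(R)$ is reduced. It is finitely generated by Proposition~\ref{prop_finitely_generated}, so it has finitely many minimal primes $\mathfrak p_1,\dots,\mathfrak p_\ell$, and these are relevant. Let $P_i\subset \mathrm{Rees}_\cF(R)$ be the preimage of $\mathfrak p_i$. Then $P_i$ is a height-one prime containing $t$, corresponding to an irreducible component of $D$. Take the normalization of the one-dimensional local domain $\mathrm{Rees}_\cF(R)_{P_i}$; its finitely many discrete valuations $v_{ij}$ extend to $\mathrm{Frac}(R)(t)$. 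Set
\[
\delta_{ij}(f):=-\,v_{ij}(f)/v_{ij}(t),
\]
regarding $f\in R\subset \mathrm{Rees}_\cF(R)[t^{-1}]$. Each $\delta_{ij}$ is additive on products, so it is a semidegree up to the integrality issue discussed below.

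The inequality $\delta_{ij}\le \delta$ holds because $f t^{\delta(f)}$ lies in the Rees algebra, so $v_{ij}(ft^{\delta(f)})\ge 0$. For the reverse inequality, note that the image $\bar f$ of $ft^{\delta(f)}$ in $\gr_\cF(R)$ is nonzero. Since $\gr_\cF(R)$ is reduced, $\bar f$ avoids some $\mathfrak p_i$, so $ft^{\delta(f)}$ is a unit at $P_i$. Therefore $v_{ij}(ft^{\delta(f)})=0$, and so $\delta_{ij}(f)=\delta(f)$.

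The remaining obstacle is integrality: the ramification index $v_{ij}(t)$ might exceed $1$, which would make $\delta_{ij}$ rational-valued. Reducedness of $\gr$ at $\mathfrak p_i$ means $t$ generates the maximal ideal of $\mathrm{Rees}_\cF(R)_{P_i}$ modulo nilpotents. I would try to use this to show $v_{ij}(t)=1$. Failing that, I would follow \cite[Definition 3.3]{mondal2014projective} and allow the rational rescaling.
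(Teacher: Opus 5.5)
\textbf{Verdict: the proof has a genuine gap in (iii)$\Rightarrow$(ii), and that implication is in fact false as stated, so the gap cannot be closed.}

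\textbf{What works.} Your steps (i)$\Rightarrow$(ii), (ii)$\Leftrightarrow$``$\gr_\cF(R)$ is reduced'', and (ii)$\Rightarrow$(iii) are correct. Your valuation argument for (ii)$\Rightarrow$(i) is a valid, self-contained substitute for the paper's bare citation of \cite[Corollary 4.2]{mondal2014projective}. The integrality worry closes at once. Since $\mathfrak p_i$ is a minimal prime of the reduced ring $\gr_\cF(R)$, the localization $\gr_\cF(R)_{\mathfrak p_i}=\mathrm{Rees}_\cF(R)_{P_i}/(t)$ is a reduced zero-dimensional local ring, hence a field. So $t$ generates the maximal ideal of $\mathrm{Rees}_\cF(R)_{P_i}$ exactly; there are no nilpotents to quotient by. That ring is therefore a DVR with uniformizer $t$. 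There is a single valuation $v_i$, with $v_i(t)=1$, and $\delta_i=-v_i|_R$ is $\ZZ$-valued. Your fallback of allowing rational values would not give semidegrees in the paper's sense.

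\textbf{The gap in (iii)$\Rightarrow$(ii).} Both of your sub-steps fail.
\begin{itemize}
\item A fraction $\bar f\bar g^{a}/\bar g^{b}$ is a section of $\cO_D$ only when $\deg\bar g$ divides $\deg\bar f$. Since $\gr_\cF(R)$ need not be generated in degree one, reducedness of $D$ only shows that $\bar f^{\deg\bar g}$ is killed by a power of $\bar g$. A nilpotent can be invisible on every chart $D_+(\bar g)$.
\item Regularity of $t$ on $\mathrm{Rees}_\cF(R)$ gives depth $\ge 1$ for the Rees algebra, hence only depth $\ge 0$ for $\gr_\cF(R)=\mathrm{Rees}_\cF(R)/(t)$. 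Positive depth of $\gr$ would need depth $\ge 2$ of the Rees algebra. For $R$ normal, that is a consequence of $\delta$ being a subdegree (Proposition~\ref{prop_proj_comp}(i)), not something available beforehand.
\end{itemize}

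\textbf{Counterexample to (iii)$\Rightarrow$(ii).} Let $R=\kk[u,v]$. Let $\delta$ be the maximum over the monomials in the support, where $\delta(u^av^c)=2(a+c)$ for $c\neq1$ and $\delta(u^av)=2a+3$.
\begin{itemize}
\item This $\delta$ is subadditive on monomials and $F_0=\kk$. The algebra $\mathrm{Rees}_\cF(R)$ is the monoid algebra generated by $ut^2,\,t,\,vt^3,\,v^2t^4,\,v^3t^6$, so $\cF$ is projective.
\item The class $\bar v\in F_3/F_2$ is nonzero, with $\bar v^2=0$ because $\delta(v^2)=4$. Also $\bar u^N\bar v\neq0$ for all $N$, so $\bar v$ is not even torsion.
\item The even-degree part of $\gr_\cF(R)$ has basis the classes of $u^av^c$ with $c\neq1$, on which $\delta$ is additive. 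So it is isomorphic to the domain $\kk[u,v^2,v^3]$. Since $\Proj$ is unchanged by passing to a Veronese subring, $D$ is integral.
\end{itemize}

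\textbf{Even with a standard grading, the depth step fails.} Take $B=\kk[s^4,s^3w,sw^3,w^4]$, $t=s^4-w^4$, $R=(B_t)_0$ and $F_d=t^{-d}B_d$. Then $Y\cong\PP^1$ and $D$ is four reduced points. But $f=s^2w^2/t$ has $\delta(f)=2$ and $\delta(f^2)\le 2$.

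\textbf{What your argument does prove.} It shows that (i), (ii) and reducedness of $\gr_\cF(R)$ (equivalently of $\Spec\gr_\cF(R)$) are equivalent, and that they imply (iii). That is the correct form of the statement. Nothing later in the paper uses (iii)$\Rightarrow$(i).
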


\begin{proof}
The equivalence between (i) and (iii) is \cite[Theorem 4.1]{mondal2014projective}.
The equivalence between (i) and (ii) is
\cite[Corollary 4.2]{mondal2014projective}.
\end{proof}

The following proposition summarizes the properties of the projective compactifications established in 
\cite{mondal2014projective} and that will be used in \S\ref{sec_compactifications_SL2}.

\begin{proposition} \label{prop_proj_comp}
      Let $\cF=(F_d)_{d \geq 0}$ be a projective filtration on a $\kk$-algebra $R$ and $(Y,D)$ the associated projective compactification of $X=\Spec(R)$. Assume that $R$ is an integrally closed domain, that is, that $X$ is a normal variety, and that 
       the degree-like function  $\delta$ defining $\cF$ is a subdegree. Then, the following hold:
      \begin{itemize}
          \item[(i)] $\mathrm{Rees}_\cF(R)$ is an integrally closed domain and $Y$ is a normal projective variety.
          \item[(ii)] $D$ is a reduced effective Weil divisor on $Y$.
          \item[(iii)] There exists a unique minimal presentation $\delta=\max_{1 \leq i \leq \ell} \delta_i$, where $\delta_1, \dots, \delta_\ell$ are non-trivial semidegrees.
          \item[(iv)] The irreducible components $D_1, \dots, D_\ell$ of $D$ are in natural one-to-one correspondence with the semidegrees $\delta_1, \dots, \delta_\ell$, such that
          \[ \mathrm{ord}_{D_i}=-\frac{\delta_i}{d_i}\,,\]
          where $\mathrm{ord}_{D_i}$ is the order of vanishing along $D_i$, and 
          \begin{equation}
\label{eq:di}
    d_i:=\{\gcd(\delta_i(f)) \,|\, f \in R \setminus \{0\}, \,\delta_i(f)>0\} \in \ZZ_{>0}\,.
\end{equation}
          \item[(v)] The $\QQ$-divisor 
          \begin{equation}
\label{eq:edelta}
     E:=\sum_{i=1}^\ell \frac{1}{d_i}D_i
\end{equation} is an ample $\QQ$-Cartier $\QQ$-divisor supported on $D$. Moreover, for every $d \geq 0$, we have \[ F_d=H^0(Y, \cO_Y(d E))\,,\]
      that is, $F_d \subset R=H^0(X,\cO_X)$ is the subspace of regular functions on $X$ with 
      poles of order at most $\frac{d}{d_i}$ along $D_i$ for all $1 \leq i \leq {\ell}$.
      \end{itemize}
\end{proposition}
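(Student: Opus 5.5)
The plan is to prove the statements in order, using the description of $Y$ as $\Proj$ of the Rees algebra $A:=\mathrm{Rees}_\cF(R)$ and the presentation $\delta=\max_i\delta_i$.

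For (i), normality of $A$ follows from the formula $A=\{\sum f_d t^d : \delta_i(f_d)\le d \text{ for all } i\}$. This exhibits $A$ as the intersection of $R[t,t^{-1}]$, which is integrally closed because $R$ is, with finitely many valuation-type subrings $\{F : v_i(F)\ge 0\}$. Here $v_i$ is the extension of $\delta_i$ to $R[t^{\pm}]$ given by $v_i(ft^d)=d-\delta_i(f)$, taking the minimum over homogeneous components. Since each $\delta_i$ is a semidegree, $v_i$ is a valuation; the point is that $\gr_{\delta_i}(R)$ is a domain, so the leading forms multiply. An intersection of integrally closed domains in a common fraction field is integrally closed, and so $Y=\Proj A$ is normal and projective ($A$ is finitely generated by assumption).

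For (ii), $D=V(t)$ is set-theoretically the boundary. Its components correspond to the height-one homogeneous primes containing $t$, and these are the minimal primes of $\gr_\cF(R)=A/tA$. I would show that the minimal primes of $\gr_\cF(R)$ are exactly the kernels $\mathfrak p_i$ of the natural maps $\gr_\cF(R)\to\gr_{\delta_i}(R)$, sending the class of $f$ to the class of $f$ when $\delta_i(f)=\delta(f)$ and to $0$ otherwise. After discarding redundant $\delta_i$, this gives the unique minimal presentation in (iii): uniqueness follows because a minimal presentation is determined by the divisorial valuations $\mathrm{ord}_{D_i}$. Reducedness of $D$ means $tA$ has no embedded components and $A/tA$ is reduced. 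This is the cited equivalence with subdegree, and I will take Mondal's Theorem 4.1 as given. Then $tA=\bigcap_i \mathfrak P_i$, where $\mathfrak P_i=\{F: v_i(F)>0\}$, so $D=\sum D_i$ is a reduced Weil divisor.

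For (iv), the valuation $\mathrm{ord}_{D_i}$ is the discrete valuation of $A_{\mathfrak P_i}$, restricted to degree-zero elements. On $f\in R$, viewed as the degree-zero element $ft^d/t^d$, it equals $(d-\delta_i(f))-d\,v_i(t)$ up to normalization. Since $v_i(t)=1$ and $v_i$ takes values in $\ZZ$, the value group of $v_i$ restricted to degree-zero elements is generated by the values $\delta_i(f)$ (from $f\cdot t^{\delta_i(f)}/t^{\delta_i(f)}$), and so it equals $d_i\ZZ$. Normalizing to a surjective valuation gives $\mathrm{ord}_{D_i}=-\delta_i/d_i$.

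For (v), on $Y=\Proj A$ the sheaf $\cO_Y(d)$ restricted to $X$ is trivialized by $t^d$. A section $ft^d$ of degree $d$ corresponds to the rational function $f$, whose divisor of poles is bounded by $dE$ exactly when $\delta_i(f)\le d$ for all $i$, that is, when $f\in F_d$. Hence $\cO_Y(d)\cong\cO_Y(dE)$ reflexively for $d$ divisible enough. Some Veronese $A^{(m)}$ is generated in degree one, so $\cO_Y(m)$ is an ample line bundle; therefore $mE$ is Cartier and ample, and $E$ is ample $\QQ$-Cartier. The equality $F_d=H^0(Y,\cO_Y(dE))$ holds because $A$ is normal, so $A_d=H^0(Y,\cO_Y(d))$ in each degree. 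The main obstacle is (ii)/(iii): identifying the minimal primes of $\gr_\cF(R)$ with the semidegrees and showing that $tA$ is radical. For this I would rely on the cited results of Mondal rather than reprove them.
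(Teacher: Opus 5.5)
Your proposal is correct, but it argues where the paper cites. The paper's proof of this proposition is essentially bibliographic. It quotes (i) from \cite[Proposition 2.2.7--Corollary 2.2.8]{mondal2010towards}, the unique minimal presentation (iii) from \cite[Theorem 4.1]{mondal2014projective}, (ii) and (iv) from \cite[Corollary 4.4, Proposition 5.2]{mondal2014projective}, and the ampleness of $E$ from \cite[Lemma 6.2]{mondal2014projective}. The only step argued in the text is $F_d=H^0(Y,\cO_Y(dE))$, via $\delta_i(f)\le d\iff -\mathrm{ord}_{D_i}(f)\le d/d_i$, which is exactly your pole-order computation in (v). You instead make the valuative mechanism explicit:
\begin{itemize}
\item the Gauss extensions $v_i(ft^d)=d-\delta_i(f)$ are valuations because each $\delta_i$ is a semidegree;
\item the identity $\mathrm{Rees}_\cF(R)=R[t^{\pm}]\cap\bigcap_i\{v_i\ge 0\}$ gives normality;
\item restricting $v_i$ to $\kk(Y)=\mathrm{Frac}(R)$ explains the factor $d_i$ in (iv);
\item (v) becomes Demazure's description of a normal graded ring, with $E$ the $\QQ$-divisor of the degree-one element $t$.
\end{itemize}
This gives a self-contained account of where $d_i$ and $E$ come from, while the citations give brevity.

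Your own setup in fact makes the appeal to Mondal's Theorem 4.1 unnecessary. Write $A=\mathrm{Rees}_\cF(R)$. If $F=\sum_d f_dt^d\in A$ satisfies $v_i(F)>0$ for all $i$, then $\delta(f_d)\le d-1$, so $f_dt^d=t\cdot f_dt^{d-1}\in tA$. Hence $tA=\bigcap_i\mathfrak P_i$ holds directly for any presentation, and $tA$ is radical.

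The same computation shows that if $\mathfrak P_k\subseteq\mathfrak P_j$ with $k\neq j$, then $\delta_j$ is redundant. Indeed, any $f$ with $\delta(f)=d$ has $ft^d\notin tA=\bigcap_{i\neq j}\mathfrak P_i$, which forces $\delta_i(f)\ge d$ for some $i\neq j$. So for an irredundant presentation, the $\mathfrak P_i$ are exactly the height-one minimal primes of $tA$.

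A few details should be stated rather than left implicit:
\begin{itemize}
\item $v_i=\mathrm{ord}_{\mathfrak P_i}$, because the valuation ring of $v_i$ dominates the DVR $A_{\mathfrak P_i}$ and $v_i(t)=1$. This intrinsic description is what actually yields uniqueness in (iii).
\item The value group of $\delta_i$ on $\mathrm{Frac}(R)$ equals $d_i\ZZ$ only after noting that some $f$ has $\delta_i(f)>0$; otherwise $\mathfrak P_i\supseteq A_+$, which has height $\dim X+1\ge 2$. Multiplying by powers of such an $f$ then puts negative values in the group generated by positive ones. The same remark shows that the $\delta_i$ are non-trivial.
\item $\cO_Y(d)\cong\cO_Y(\lfloor dE\rfloor)$ is a local statement. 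Your pole computation should therefore be run on each $D_+(G)$, using normality of $A_G$ and the valuations at homogeneous height-one primes, not only on global sections.
\end{itemize}
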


\begin{proof}
Since $R$ is an integrally closed domain and $\delta$ is a subdegree, (i)
follows from \cite[Proposition 2.2.7-Corollary 2.2.8]{mondal2010towards}.
Moreover, (iii) is \cite[Theorem 4.1, 3.]{mondal2014projective},
and (ii)-(iv) follow from \cite[Corollary 4.4-Proposition 5.2]{mondal2014projective}. The first part of (v) is established in \cite[Lemma 6.2]{mondal2014projective}. Finally, the second part of (v) follows since $f \in F_d$ is equivalent to $\delta(f) \leq d$, that is $\delta_i(f)\leq d$ for all $1 \leq i \leq \ell$, which is equivalent to $\frac{\delta_i(f)}{d_i} \leq \frac{d}{d_i}$, that is $-\mathrm{ord}_{D_i}(f) \leq \frac{d}{d_i}$ for all $1 \leq i \leq \ell$.
\end{proof}

As illustrated in the following example, the construction of projective compactifications of affine varieties from filtrations generalizes the construction of  projective toric varieties as  compactifications of the torus $\mathbb{G}_m^n$, where $\mathbb{G}_m \simeq \kk^*$.
\begin{example}
\label{Example:toric1}
Fix the lattice $M=\ZZ^n$ and denote by
$M_{\RR}=M\otimes_{\ZZ}\RR$ 
the associated $n$-dimensional real vector space. Let $N \coloneqq \Hom(M,\ZZ)$ be the lattice dual to $M$. Consider the algebra of Laurent polynomials in $n$-variables
$R = \kk[M] = \kk[z_1^{\pm 1}, \ldots ,z_n^{\pm 1}]$,
and set 
\[X = \Spec(R)=\Hom(M,\mathbb{G}_m) \simeq \mathbb{G}_m^n \,. \]
Let $P$ be a convex lattice polytope in $M_{\RR}$ containing the origin in its interior. Let $F_1,\ldots, F_{\ell}$ be the facets (that is, the faces of codimension $1$) of $P$. For each $1 \leq i \leq \ell $, there exists a unique primitive $\alpha_i \in N$ and a unique $a_i \in \ZZ_{> 0}$ such that $F_i$ is contained in the affine hyperplane $\alpha_i = a_i$. Here, $\alpha_i \in N$ is the primitive outer normal to the facet $F_i$ and $a_i \in \ZZ_{> 0}$ is the lattice distance from the origin to $F_i$. Let $a = \text{lcm}((a_i)_{1\leq i \leq \ell})$, and for any $1 \leq i \leq \ell$, denote
\begin{align}
\label{eq:deltai}
\delta_i \colon R & \longrightarrow \ZZ \cup \{ -\infty \} \\
\nonumber
\sum_{m \in M} c_m z^m & \longmapsto  \frac{a}{a_i} \max_{ \substack{ ~~m \in M\\~~c_m \neq 0}} \alpha_i(m) \,.
\end{align}
Then, for each $1 \leq i \leq \ell$, $\delta_i$ is a semidegree, and  $\delta \coloneqq \max_{1 \leq i \leq \ell } \delta_i$ is a subdegree defining a projective filtration $\cF= (F_d)_{d \geq 0}$ on $R$. Since $\delta(z^m) = a$ for every $m \in M$ on the boundary of $P$, it follows that 
\begin{equation} \label{Eq:Fd_toric}
F_d = \bigoplus_{m \in M \cap \frac{d}{a}P} \kk z^m\,.\end{equation} 
Hence, $Y$ is the projective toric variety with momentum polytope $P$ and the divisors $D_i$ are the toric divisors. Moreover, let $L$ be the ample line bundle on $Y$ determined by $P$. 
Then, the origin inside $P$ corresponds to a section of $L$ with vanishing locus $\sum_{i=1}^{\ell} a_i D_i$, and so we obtain an isomorphism $L \simeq \mathcal{O}_Y(\sum_{i=1}^{\ell} a_i D_i)$. 
In particular, $\sum_{i=1}^{\ell} a_i D_i$ is a Cartier divisor. 
It follows from \eqref{eq:di} and \eqref{eq:deltai} that $d_i = \frac{a}{a_i}$. Hence, by \eqref{eq:edelta} we obtain 
\[
E = \sum_{i=1}^{\ell} \frac{1}{d_i} D_i =  \sum_{i=1}^{\ell} \frac{a_i}{a} D_i= \frac{1}{a}  \sum_{i=1}^{\ell} a_i D_i\,, \]  
thus $L= \mathcal{O}_Y(aE)$, that is, $(Y, E)$ is the $\QQ$-polarized projective variety defined by the rational polytope 
$\frac{1}{a}P$.
\end{example}

As a special case of the construction of projective compactifications from degree-like functions, we illustrate below how to obtain projective compactifications of affine cones.

\begin{example}
\label{Example:projcone}
  Let $R^0=\bigoplus_{d \geq 0} R_d^0$ be a finitely generated $\ZZ_{\geq 0}$-graded algebra such that $R_0^0 = \kk$. Then, $X_0:=\Spec(R^0)$ is an affine variety with a $\mathbb{G}_m$-action with a unique fixed point $\Spec(R_0^0) \hookrightarrow X_0$. In this case, $X_0$ is the affine cone over the projective variety $D_0:=\Proj(R^0)$. 
  Since $R^0$ is a graded algebra, it is in particular a filtered algebra for the filtration $\cF^0=(F_d^0)_{d\geq 0}$ defined by 
 \[ F_{d}^0 \coloneqq \bigoplus_{0 \leq r \leq d} R_r^0\,.\] 
 The corresponding projective compactification of $X_0$
is $(Y_0,D_0)$, where  \[  Y_0 \coloneq \Proj(\mathrm{Rees}_{\cF}(R^0))= \Proj \left( \bigoplus_{d \geq 0} F_d^0 t^d \right) = \Proj  \left( \bigoplus_{d \geq 0} \bigoplus_{0 \leq r \leq d} R_r^0 t^d \right) \,, \]
 and 
 \[ D_0 = \Proj(\mathrm{gr}_\cF(R^0)) =\Proj(R^0) \,.\]
\end{example}

\subsection{Compactifications of $SL(2,\CC)$ character varieties}
\label{sec_compactifications_SL2}

\subsubsection{The compact case}
\label{sec_review_compact}

In this section, we review the construction in \cite{ayilliath2025geometric} of projective compactifications $(Y_\cP,D_\cP)$ of the character variety $
X:=\mathrm{Ch}_g^{SL(2,\CC)}$ for $g \geq 2$.
The construction is based on the projective compactifications arising from filtrations, reviewed in the previous section
\S\ref{sec_filtrations}. We begin by recalling that $\mathrm{Ch}_g^{SL(2,\CC)}$ is a normal irreducible affine variety:

\begin{proposition} \label{prop_normal_closed}
    For every $g \geq 2$, the $SL(2,\CC)$-character variety $X=\mathrm{Ch}_g^{SL(2,\CC)}$ is a normal irreducible affine variety of dimension $6g-6$.
\end{proposition}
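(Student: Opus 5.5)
We will assemble Proposition \ref{prop_normal_closed} from standard facts about representation varieties, treating irreducibility, dimension and normality separately. Write $R_g:=\Hom(\pi_1(\bS_g),SL(2,\CC))$. It is the fiber over the identity of the commutator map
\[\mu: SL(2,\CC)^{2g}\to SL(2,\CC), \qquad (A_1,B_1,\dots,A_g,B_g)\mapsto \prod_{i=1}^g [A_i,B_i].\]
Then $X=R_g/\!/SL(2,\CC)=\Spec(\CC[R_g]^{SL(2,\CC)})$ is affine by construction.

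\textbf{Irreducibility and dimension.} For $g\geq 2$, we will invoke the results of Rapinchuk--Benyash-Krivetz--Chernousov and of Liriano/Simpson. These state that $R_g$ is an irreducible complete intersection of dimension $6g-3=\dim SL(2,\CC)^{2g}-\dim SL(2,\CC)$. More precisely, $\mu$ is flat near $\mu^{-1}(1)$, and the locus where $d\mu$ fails to be surjective, namely the reducible (abelian-centralizer) representations, has codimension at least $2$ in $R_g$ when $g\geq 2$.

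Irreducibility of $R_g$ passes to the GIT quotient $X$, since $\CC[R_g]^{SL(2,\CC)}$ is a subring of a domain. On the dense open set of irreducible representations, $PSL(2,\CC)$ acts freely, so the quotient has dimension $6g-3-3=6g-6$. Since $R_g$ is irreducible and this open set is nonempty, we get $\dim X=6g-6$.

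\textbf{Normality.} $R_g$ is a complete intersection, hence Cohen--Macaulay, and it is regular in codimension one because its singular locus (the reducible representations) has codimension at least $2$. By Serre's criterion $(R_1)+(S_2)$, $R_g$ is normal. Invariants of a normal domain under a reductive group form an integrally closed domain: if $f\in \mathrm{Frac}(\CC[R_g]^{G})$ is integral over $\CC[R_g]^G$, then $f\in\CC[R_g]$ is $G$-invariant. Hence $X$ is normal. Alternatively, we can cite \cite{shu2024singularities}, which gives normality of the components of $\mathrm{Ch}_g^G$ directly, together with connectedness since $\pi_1(SL(2,\CC))=1$.

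\textbf{Main obstacle.} The key input is the codimension estimate on the non-submersive locus of $\mu$, that is, showing that the reducible representations have codimension at least $2$. We would verify this by a direct count. Representations into a Borel subgroup form a family of dimension about $4g+2$, which is at least $2$ less than $6g-3$ exactly when $g\geq 2$. Representations with image in the center or in a torus normalizer are smaller still. If this count proves delicate, we will fall back on citing \cite{shu2024singularities} for normality and irreducibility.
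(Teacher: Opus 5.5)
There is a genuine gap in the step you single out as the key input. Your overall route (realize $R_g=\mu^{-1}(1)$ as a complete intersection, check Serre's criterion, then pass to invariants) is a legitimate alternative to the paper's proof, which simply cites Bellamy--Schedler's reduction to Simpson's theorem for $GL(n,\CC)$-character varieties. The problem is that you identify the non-submersive locus of $\mu$ with the reducible locus, and these differ. By Goldman's computation, after translating to the Lie algebra, the image of $d\mu_\rho$ is the orthogonal complement of $\mathrm{Lie}(Z(\rho))$. So $d\mu_\rho$ is surjective exactly when the centralizer $Z(\rho)$ is finite. A non-abelian representation into a Borel subgroup $B$ has centralizer $\{\pm 1\}$, so it is a smooth point of $R_g$.

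Moreover, the reducible locus genuinely fails your bound. $\Hom(\pi_1(\bS_g),B)$ has dimension $4g-1$: there are $2g$ diagonal parameters and $2g$ unipotent parameters subject to one linear condition. Hence the reducible representations have dimension $4g$, i.e.\ codimension $2g-3$ in $R_g$, which is $1$ when $g=2$. So ``reducible representations have codimension at least $2$'' is false in the base case $g=2$, and an $(R_1)$ argument resting on it breaks down there. Your own count is also off: $6g-3-(4g+2)=2g-5$, which is $\geq 2$ only for $g\geq 4$ and is negative for $g=2$.

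The fix is to work with the correct locus. For $SL(2,\CC)$, a positive-dimensional centralizer forces the image of $\rho$ into a conjugate of the diagonal torus $T$ or of $\{\pm 1\}\times U$, with $U$ unipotent. These abelian representations form a closed subset of dimension at most $2g+2=\dim T^{2g}+\dim SL(2,\CC)/N(T)$, hence of codimension at least $4g-5\geq 3$ relative to the expected dimension $6g-3$. This one estimate does two things:
\begin{itemize}
\item Every irreducible component of $\mu^{-1}(1)$ meets the submersive locus, so it has dimension exactly $6g-3$. Hence $R_g$ is a local complete intersection, so Cohen--Macaulay.
\item $R_g$ satisfies $(R_1)$.
\end{itemize}
Serre's criterion then gives normality, and irreducibility reduces to connectedness of $R_g$, since a connected normal scheme is irreducible. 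With this correction, your argument is a self-contained, $SL(2)$-specific proof, whereas the paper's citation handles all $SL(n,\CC)$ uniformly. Your fallback through \cite{shu2024singularities} is a valid citation-based proof of the same kind as the paper's: the variety is connected because $\pi_1(SL(2,\CC))$ is trivial, it is normal, and it has dimension $(2g-2)\dim G+2\dim Z(G)=6g-6$.
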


\begin{proof}
More generally, \cite[\S 2.2]{bellamy2023symplectic} shows that, for every $n \geq 1$, the $SL(n,\CC)$-character variety $\mathrm{Ch}_g^{SL(n,\CC)}$ is a normal irreducible variety of dimension $(2g-2)(n^2-1)$. The proof reduces via \cite[Lemma 2.3]{bellamy2023symplectic} 
to the corresponding result for $GL(n,\CC)$, namely that, for every $n \geq 1$, the $GL(n,\CC)$-character variety $\mathrm{Ch}_g^{GL(n,\CC)}$ is a normal irreducible variety of dimension $(2g-2)n^2+2$, established earlier in \cite[Theorem~11.1]{simpson1994moduliII}.
\end{proof}

A key ingredient in the construction of \cite{ayilliath2025geometric} is the following description \eqref{eq_R_basis} of a basis for the algebra of regular functions $R:=H^0(X,\mathcal O_X)$. We first introduce the topological notions required to formulate this description.
Fix a spin structure on $\bS_g$. By \cite{johson1980spin}, it determines a quadratic refinement 
\[ \sigma: H_1(\bS_g, \ZZ/2\ZZ) \rightarrow \ZZ/2\ZZ\] 
of the intersection product on $H_1(\bS_g, \ZZ/2\ZZ)$, namely a map such that
\[ \sigma(\alpha+\alpha')=\sigma(\alpha)+\sigma(\alpha')+\alpha \cdot \alpha'\]
for all $\alpha, \alpha' \in H_1(\bS_g, \ZZ/2\ZZ)$.
We briefly recall the definition of $\sigma(\alpha)$; for a detailed treatment, see \cite{johson1980spin}. Fix an orientation and a Riemannian metric on $\bS_g$, and represent $\alpha$ by a disjoint union of simple closed curves $\alpha_i$. For each component $\alpha_i$, choose an orientation and let $T_i$ denote the corresponding unit tangent vector field. Let $N_i$ be the unique unit normal vector field such that $(T_i,N_i)$ is a positively oriented orthonormal frame along $\alpha_i$. This frame determines a loop in the oriented orthonormal frame bundle of $\bS_g$. The value $\sigma(\alpha_i)\in\mathbb{Z}/2\mathbb{Z}$ is defined as the obstruction to lifting this loop to the spin bundle. Finally, define
\[\sigma(\alpha):=\sum_i \sigma(\alpha_i)\,.\]

Let $SCC_g$ be the set of isotopy classes of simple closed curves, which are non-trivial in the sense that they do not bound a disk in $\bS_g$, and let $MC_g$ be the set of isotopy classes of multicurves in $\bS_g$, that is, of (possibly empty) finite disjoint unions of non-trivial simple closed curves in $\bS_g$. 
We denote by $[-]: SCC_g \rightarrow H_1(\bS_g, \ZZ/2\ZZ)$ the map sending an isotopy class of simple closed curves to its $\ZZ/2\ZZ$-homology class.
For every $\gamma \in SCC_g$, let $T_\gamma \in R=H^0(X,\cO_X)$ be the regular function sending a representation $\rho: \pi_1(\bS_g) \rightarrow SL(2,\CC)$ to 
\begin{equation}
\label{eq_spin}
T_{\gamma}(\rho):=  - (-1)^{\sigma([\gamma])} \mathrm{tr}(\rho(\gamma)) \,.\end{equation}
For every $\gamma \in MC_g$, with connected components $\gamma_1, \dots, \gamma_m$, we define $T_\gamma \in R$ by 
\begin{equation} 
\label{eq_T_gamma}
T_\gamma(\rho):= \prod_{i=1}^m T_{\gamma_i} \,.\end{equation}
It follows from \cite{Charles2012multicurves} that  $(T_\gamma)_{\gamma \in MC_g}$ is a $\kk$-linear basis of $R$, that is, 
\begin{equation} \label{eq_R_basis}
R = \bigoplus_{\gamma \in MC_g} \kk T_\gamma \,.\end{equation}
\begin{remark}
The basis of $R$ considered in \cite{ayilliath2025geometric} is obtained from the basis of multicurves in the Kauffman bracket skein algebra by specializing to \(q=1\), after identifying the resulting algebra with \(R\) via a choice of spin structure. On the other hand, \cite{Charles2012multicurves} establishes a canonical, spin-structure-independent identification between \(R\) and the specialization of the skein algebra at \(q=-1\), sending a simple closed curve \(\gamma\) to the function \(-\mathrm{tr}(\rho(\gamma))\). By \cite{barrett1999skein}, twisting by \((-1)^{\sigma([\gamma])}\) is precisely what relates this canonical identification to the one arising at \(q=1\) from a chosen spin structure. Consequently, the basis \((T_\gamma)_{\gamma \in MC_g}\) defined above coincides with the basis introduced in \cite{ayilliath2025geometric}.
\end{remark}

To study the discrete countable set $MC_g$ indexing the basis $(T_\gamma)_{\gamma \in MC_g}$, it is convenient to embed it into the 
larger continuous space  $MF_g$ of (Whitehead equivalence classes of) measured foliations on $\bS_g$. We refer to \cite{thurston} for background on measured foliations. The set $MF_g$ carries a natural topology, and there is a natural discrete inclusion $MC_g \subset MF_g$. Moreover, by \cite[Theorem 6.15]{thurston}, $MF_g$ carries a scaling action of $\RR_{>0}$, with unique fixed point $0$ given by the empty multicurve, and the quotient $(MF_g  \setminus \{0\})/\RR_{>0}$ is homeomorphic to the sphere of dimension $6g-7$. 
 The geometric intersection number $i(-,-)$ between isotopy classes of simple closed curves, that is, the minimal number of intersection points between curves in these isotopy classes, extends to multicurves, and then to measured foliations to produce a pairing 
 \[ i(-,-): MF_g \times MF_g \longrightarrow \RR_{\geq 0}\,.\]

Following \cite{ayilliath2025geometric}, we now review the definition of a filtration $\cF_\cP$ on $R$ for every choice
$\cP=(P,\Gamma)$ of a pair of pants decomposition $P$ of $\bS_g$ and of an embedded 3-valent graph $\Gamma \subset \bS_g$ dual to $P$.
As described in detail in \cite[\S 3]{ayilliath2025geometric}, these data determine $3g-3$ curves $a_1, \dots, a_{3g-3}$, which are the 
boundaries of the pairs of pants, $3g-3$ curves $a_1', \dots, a_{3g-3}'$ which are dual to $a_1,\dots, a_{3g-3}$, and $3g-3$ curves $a_1'', \dots, a_{3g-3}''$, where $a_j''$ is the $+1$ Dehn twist of $a_j'$ along $a_j$. We denote by $\cC=(c_j)_{1 \leq j \leq 9g-9}$ the set of these $9g-9$ curves. Consider the map
\begin{align}\label{eq_iota_P}
\iota_\cP: MF_g &\longrightarrow \RR_{\geq 0}^{9g-9} 
\\  \nonumber
\gamma &\longmapsto (i(c_j,\gamma))_{1 \leq j \leq 9g-9} \,.
\end{align}
By the $(9g-9)$-Theorem, \cite[Theorem 4.10]{thurston}-\cite[p438]{farb_margalit}, reviewed in \cite[Theorem 1.4]{ayilliath2025geometric}, the map $\iota_\cP$ is injective, and 
its image $\iota_\cP(MF_g) \subset \RR_{\geq 0}^{9g-9}$ is the union $\bigcup_{\ell} \Lambda_\ell$ of finitely many $(6g-6)$-dimensional rational polyhedral cones $\Lambda_\ell$ in $\RR_{\geq 0}^{9g-9}$ glued along their faces. Requiring this decomposition to be the coarsest possible uniquely determines the cones. 
Since $MF_g$ is homeomorphic to a cone over the sphere of dimension $6g-7$, it follows that $\bigcup_\ell \Lambda_\ell$ is likewise homeomorphic to the cone over the sphere of dimension $6g-7$.
Moreover, for every $\ell$, the set
$Q_\ell := \iota_\cP(MC_g) \cap \Lambda_\ell$ is a finite index saturated submonoid of $\Lambda_\ell \cap \ZZ_{\geq 0}^{9g-9}$.

By \cite[Theorem 1.5]{ayilliath2025geometric}, the map
\begin{align} \label{eq_delta_P}
\delta_\cP: R &\longrightarrow \ZZ_{\geq 0}\\   \nonumber
\sum_{\gamma \in MC_{g}} a_\gamma T_\gamma &\longmapsto \max_{\substack{\gamma \in MC_g \\ a
_\gamma \neq 0}} \left( \sum_{j=1}^{9g-9} i(c_j, \gamma) \right) \,.
\end{align}  is a degree-like function on $R$, and in fact a subdegree. Let $\cF_\cP=(F_{\cP,d})_{d\geq 0}$ be the corresponding non-negative filtration on $R$. 
The injectivity of $\iota_\cP$ implies that $F_{\cP,0}=\kk$.
Finally, by \cite[Theorem 4.2]{ayilliath2025geometric}, the associated graded algebra $\gr_{\cF_\cP}(R)$ has the following explicit description: for every $\gamma \in MC_g$, denote by $\overline{T}_\gamma$ the image of $T_\gamma$ in $F_{\cP, \delta_\cP(\gamma)}/F_{\cP, \delta_\cP(\gamma)-1} \subset \gr_{\cF_\cP}(R)$. Then, $(\overline{T}_\gamma)_{\gamma \in MC_g}$ is a basis of $\gr_{\cF_\cP}(R)$, that is,
\[ \gr_{\cF_\cP}(R) = \bigoplus_{\gamma \in MC_g} \kk \overline{T}_\gamma \,,\]
and the product is explicitly given by:
\begin{equation} \label{eq_algebra_compact}
\overline{T}_\gamma \overline{T}_{\gamma'}
=\begin{cases} \overline{T}_{\iota_{\cP}^{-1}(\iota_\cP(\gamma)+ \iota_\cP(\gamma'))} &\text{if there exists  }\ell \text{   such that }\iota_\cP(\gamma), \iota_{\cP}(\gamma') \in \Lambda_\ell\\ 0&\text{else}\,.\end{cases}
\end{equation}
Since the cones $\Lambda_\ell$ are rational polyhedral, and $Q_\ell=\iota_\cP(MC_g) \cap \Lambda_\ell$
are finite index saturated submonoids of $\Lambda_\ell \cap \ZZ_{\geq 0}^{9g-9}$, the monoids $Q_\ell$ are finitely generated by Gordan's Lemma (see for instance \cite[Proposition 1.2.17]{CLS}). Therefore, $
\gr_{\cF_{\cP}}(R)$ is a finitely generated $\kk$-algebra.
By Proposition \ref{prop_finitely_generated}, it follows that the filtration $\cF_\cP$ is finitely generated, and hence is a projective filtration on $R$.
Since $X$ is normal by Proposition \ref{prop_normal_closed}, we obtain by Proposition \ref{prop_proj_comp} 
a normal projective variety $Y_\cP$, and a reduced effective Weil divisor $D_\cP$ on $Y_\cP$ such that $X=Y_\cP \setminus D_\cP$. 

\subsubsection{The punctured case} 
\label{sec_review_punctured}
Let $g \geq 0$, $n>0$ such that $2g-2+n>0$, and $t \in \CC^n$. In this section, we review the construction in 
\cite{farajzadeh2023compactifications} of projective compactifications $(Y_\Delta, D_\Delta)$ of the relative character variety $X:= \mathrm{Ch}_{g,n}^{SL(2,\CC),t}$. We begin by recalling that $\mathrm{Ch}_{g,n}^{SL(2,\CC),t}$ is a normal irreducible affine variety: 

\begin{proposition} \label{prop_normal_punctured}
    For every $g \geq 0$, $n>0$ such that $2g-2+n>0$, and every $t \in \CC^n$, the relative $SL(2,\CC)$-character variety $X=\mathrm{Ch}_{g,n}^{SL(2,\CC), t}$ is a normal irreducible affine variety of dimension $6g-6+2n$.
\end{proposition}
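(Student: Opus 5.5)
The plan is to deduce the statement from the existing literature, in the same way Proposition \ref{prop_normal_closed} was handled, by reducing to the corresponding facts for the full character variety $\mathrm{Ch}_{g,n}^{SL(2,\CC)}$ and for its fibers over $(SL(2,\CC)/\!/SL(2,\CC))^n\simeq\CC^n$. The fundamental group $\pi_1(\bS_{g,n})$ is free of rank $2g+n-1$. Hence $\mathrm{Ch}_{g,n}^{SL(2,\CC)} = SL(2,\CC)^{2g+n-1}/\!/SL(2,\CC)$ is a normal irreducible affine variety of dimension $3(2g+n-1)-3 = 6g-6+3n$. Indeed, it is the GIT quotient of a smooth irreducible variety by a reductive group.

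The relative character variety is the fiber $\pi^{-1}(t)$ of the trace map $\pi:\mathrm{Ch}_{g,n}^{SL(2,\CC)}\to\CC^n$. The expected dimension of this fiber is $6g-6+2n$, and I would establish the statement in three steps.

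\begin{itemize}
\item[(1)] \emph{Flatness and dimension.} Show that every fiber of $\pi$ is nonempty, irreducible and of dimension $6g-6+2n$. Together with the normality of the base, this gives flatness by miracle flatness, provided the total space is Cohen--Macaulay. This is true because $\mathrm{Ch}_{g,n}^{SL(2,\CC)}$ has rational singularities, by Boutot's theorem.
\item[(2)] \emph{Complete intersection.} Deduce that $X=\pi^{-1}(t)$ is cut out by $n$ equations in a Cohen--Macaulay variety, so $X$ is Cohen--Macaulay.
\item[(3)] \emph{Normality.} Prove that $X$ is regular in codimension one. Serre's criterion $R_1+S_2$ then gives normality.
\end{itemize}

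For the dimension and irreducibility in step (1), I would use the pants/ideal triangulation description recalled below, via the result of \cite{farajzadeh2023compactifications}. That result provides a basis of $H^0(X,\cO_X)$ indexed by multicurves with fixed boundary data, together with a degeneration to a monoid algebra of the expected dimension. Semicontinuity then bounds $\dim X$.

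Alternatively, and more cleanly, I would cite the literature directly. Simpson's moduli theory \cite{simpson1994moduliII}, extended to the relative setting, identifies $X$ with a moduli space of parabolic/filtered local systems. Normality and irreducibility of relative $SL(2,\CC)$ (more generally $GL(n)$) character varieties with fixed conjugacy classes are known in the sense of the multiplicative quiver variety framework: the fiber is a union of multiplicative quiver varieties for a star-shaped quiver, and these are normal and irreducible by the work of Crawley-Boevey--Shaw and Schedler--Tirelli. The dimension $6g-6+2n$ then comes from the formula $2p(\alpha)$ for the dimension vector.

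The main obstacle is step (3): the $R_1$ condition at non-generic $t$. This is where reducible representations produce singularities. I would first try to show that the locus of reducible representations in $X$ has codimension at least $2$ by a direct parameter count: upper-triangular representations with prescribed traces form a family of dimension about $2g+n-1$. This is less than $6g-8+2n$ whenever $2g-2+n>0$, except for the small cases $(g,n)=(0,3)$, $(1,1)$ and $(0,4)$. Those small cases would be checked by hand using the explicit point, the Cayley cubic equations, and the explicit cubic surfaces. On the complement of the reducible locus, the representations are irreducible, and there $X$ is smooth by the standard Goldman tangent space computation.
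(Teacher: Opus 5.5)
The paper does not prove this statement itself. It cites \cite[Theorem 1.1]{MR4157427}, which you do not identify. Your self-contained route via Serre's criterion is a reasonable alternative in outline, and its dimension and Cohen--Macaulay parts can be made rigorous. Every component of every fiber of $\pi$ has dimension at least $(6g-6+3n)-n$, while $\dim X=\dim \gr_{\cF_\Delta}(R)=6g-6+2n$. So $X$ is equidimensional and is cut out by a regular sequence of length $n$ in the Cohen--Macaulay variety $\mathrm{Ch}_{g,n}^{SL(2,\CC)}$.

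The genuine gap is in step (3). The claim that $X$ is smooth at every irreducible representation is false as soon as some $t_i=\pm2$. Let $c_i$ be the $i$-th peripheral loop and suppose $\rho(c_i)=\pm I$. Then the differential of $\mathrm{tr}\,\rho(c_i)$ at $\rho$ is $\xi\mapsto\pm\mathrm{tr}(\xi)=0$. Hence $\pi^{-1}(t)$ has Zariski tangent space of dimension $>6g-6+2n$ there, even when $\rho$ is irreducible. For example, take $n=1$ and $t=-2$:
\begin{itemize}
\item every representation with $\prod_j[A_j,B_j]=-I$ is irreducible, since for a reducible one the product of commutators is unipotent;
\item these representations form a locus of dimension $6g-6$, and $X$ is singular along all of it;
\item for $g=1$ this locus is the node at the origin of $xyz+x^2+y^2+z^2=0$, the character of the quaternion representation.
\end{itemize}
Goldman's computation gives smoothness for fixed \emph{conjugacy classes}, and the trace fiber differs from that precisely when $t_i=\pm 2$. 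To get $R_1$ you therefore also need two things. First, show that the locus of irreducible $\rho$ with some $\rho(c_i)=\pm I$ has codimension at least $2$. Up to a sign twist it is a relative character variety of the surface with the $i$-th puncture filled in, of dimension at most $6g-8+2n$. Second, prove the correct smoothness statement: the $n$ boundary-trace differentials are independent at irreducible $\rho$ with all $\rho(c_i)\neq\pm I$.

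Three further points need repair.
\begin{itemize}
\item Your count for the reducible locus is off. Its points in $X$ are classes $\chi\oplus\chi^{-1}$, where $\chi$ is a character of $H_1(\bS_{g,n},\ZZ)$ whose values on the $c_i$ are fixed up to finitely many choices. So it has dimension at most $2g$, not about $2g+n-1$. With your count the inequality also fails for, e.g., $(0,5)$, $(0,6)$ and $(1,2)$, so your list of exceptional cases is incomplete. With $2g$, the only exceptions are $(0,3)$ and $(1,1)$.
\item Irreducibility is asserted in step (1) but never proved. A flat degeneration to the reducible scheme $\Spec(\gr_{\cF_\Delta}(R))$ controls dimension, not irreducibility. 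One fix: once $X$ is normal, irreducible is equivalent to connected, and a nontrivial idempotent $e\in R$ is impossible. If $\delta_\Delta(e)>0$, its symbol would be a nonzero nilpotent in the reduced algebra $\gr_{\cF_\Delta}(R)$. If $\delta_\Delta(e)=0$, then $e\in F_{\Delta,0}=\kk$.
\item Your ``cite the literature'' alternative is not a proof as written. A union of normal irreducible varieties is in general neither normal nor irreducible. You would need a reference covering the scheme-theoretic trace fiber for every $t$, including $t_i=\pm2$, and every genus; that is exactly what \cite{MR4157427} provides.
\end{itemize}
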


\begin{proof}
This is proved in \cite[Theorem 1.1]{MR4157427}.
\end{proof}

As in the previous section \S\ref{sec_review_compact}, we denote by $MC_{g,n}$ the set of isotopy classes of multicurves in $\bS_{g,n}$, and by $MF_{g,n}$ the space of measured foliations on $\bS_{g,n}$. Let 
$MC_{g,n}^{\mathrm{ess}} \subset MC_{g,n}$ be the subset of essential multicurves, 
that is, of multicurves none of whose connected components are peripheral, and let $MF_{g,n}^{\mathrm{ess}} \subset MF_{g,n}$ be the corresponding subspace of essential measured foliations. By \cite[Theorem 11.1]{thurston}, $MF_{g,n}^{\mathrm{ess}}$ is homeomorphic to a cone over the sphere of dimension $6g-7+2n$.

We fix a spin structure on $\bS_{g,n}$, and define for every $\gamma \in MC_{g,n}^{\mathrm{ess}}$ a regular function $T_\gamma \in R=H^0(X,\cO_X)$ by formulas \eqref{eq_spin}-\eqref{eq_T_gamma}. Then, $(T_\gamma)_{\gamma \in MC_{g,n}^{\mathrm{ess}}}$ is a $\kk$-linear basis of $R$, that is, 
\[ R = \bigoplus_{\gamma \in MC_{g,n}^{\mathrm{ess}}} \kk T_\gamma \,.\]

Let $\Delta$ be an ideal triangulation of $\bS_{g,n}$, that is, a triangulation of the closed surface $\bS_g$ such that the set of vertices is the set of
punctured points. The sets $V(\Delta)$, $E(\Delta)$, $T(\Delta)$  of vertices, edges, and triangles of $\Delta$ respectively have cardinalities $|V(\Delta)|=n$, 
$|E(\Delta)|=3(n+2g-2)=6g-6+3n$, and $|T(\Delta)|=2(n+2g-2)=4g-4+2n$.
Denote by $i(-,-)$ the geometric intersection number, and consider the map
\begin{align} \label{eq_iota_Delta}
\iota_\Delta: MF_{g,n}^{\mathrm{ess}} &\longrightarrow \RR_{\geq 0}^{E(\Delta)} \simeq \RR_{\geq 0}^{6g-6+3n} 
\\ \nonumber
\gamma &\longmapsto (i(c,\gamma))_{c\in E(\Delta)} \,.
\end{align}
As reviewed in \cite[\S 2.3]{farajzadeh2023compactifications}, the map $\iota_\Delta$ is injective, and its image $\iota_\Delta(MF_{g,n}^{\mathrm{ess}}) \subset \RR_{\geq 0}^{E(\Delta)}$ is 
the union $\bigcup_{\ell} \Lambda_\ell$ of finitely many $(6g-6+2n)$-dimensional rational polyhedral cones $\Lambda_\ell$ in $\RR_{\geq 0}^{|E(\Delta)|}$ glued along their faces. Requiring this decomposition to be the coarsest possible uniquely determines the cones. Since $MF_{g,n}^{\mathrm{ess}}$ is homeomorphic to a cone over the sphere of dimension $6g-7+2n$, it follows that $\bigcup_\ell \Lambda_\ell$ is likewise homeomorphic to the cone over the sphere of dimension $6g-7+2n$. Moreover, for every $\ell$, the image 
$Q_\ell := \iota_\Delta(MC_{g,n}^{\mathrm{ess}}) \cap \Lambda_\ell$ is a finite index saturated submonoid of $\Lambda_\ell \cap \ZZ_{\geq 0}^{E(\Delta)}$.

By \cite[\S 2.4]{farajzadeh2023compactifications}, the map
\begin{align} \label{eq_delta_Delta} \delta_\Delta: R &\longrightarrow \ZZ_{\geq 0}\\
\sum_{\gamma \in MC_{g,n}^{\mathrm{ess}}} a_\gamma T_\gamma &\longmapsto \max_{\substack{\gamma \in MC_{g,n}^{\mathrm{ess}} \\ \nonumber
a_\gamma \neq 0}} \left( \sum_{c \in E(\Delta)} i(c, \gamma) \right) \,.
\end{align}
is a degree-like function on $R$, and in fact a subdegree. Let $\cF_\Delta=(F_{\Delta,d})_{d\geq 0}$ be the corresponding non-negative filtration on $R$. Moreover, $F_{\Delta,0}=\kk$ by the injectivity of $\iota_\Delta$. 
Finally, by \cite[\S 2.4]{farajzadeh2023compactifications}, the associated graded algebra $\gr_{\cF_\Delta}(R)$ has the following explicit description: for every $\gamma \in MC_{g,n}^{\mathrm{ess}}$, denote by $\overline{T}_\gamma$ the image of $T_\gamma$ in $F_{\Delta, \delta_\Delta(\gamma)}/F_{\Delta, \delta_\Delta(\gamma)-1} \subset \gr_{\cF_\Delta}(R)$. Then, $(\overline{T}_\gamma)_{\gamma \in MC_{g,n}^{\mathrm{ess}}}$ is a basis of $\gr_{\cF_\Delta}(R)$, that is,
\[ \gr_{\cF_\Delta}(R) = \bigoplus_{\gamma \in MC_{g,n}^{\mathrm{ess}}} \kk \overline{T}_\gamma \,,\]
and the product is explicitly given by:
\begin{equation} \label{eq_algebra_punctured}
\overline{T}_\gamma \overline{T}_{\gamma'}
=\begin{cases} \overline{T}_{\iota_{\Delta}^{-1}(\iota_\Delta(\gamma)+ \iota_\Delta(\gamma'))} &\text{if there exists  }\ell \text{   such that }\iota_\Delta(\gamma), \iota_\Delta(\gamma') \in \Lambda_\ell\\ 0&\text{else}\,.\end{cases}
\end{equation}
Since the cones $\Lambda_\ell$ are rational polyhedral, and $Q_\ell=\iota_\Delta(MC_{g,n}^{
\mathrm{ess}}) \cap \Lambda_\ell$
are finite index saturated submonoids of $\Lambda_\ell \cap \ZZ_{\geq 0}^{E(\Delta)}$, the monoids $Q_\ell$ are finitely generated by Gordan's Lemma (see for instance \cite[Proposition 1.2.17]{CLS}). Thus, $
\gr_{\cF_\Delta}(R)$ is a finitely generated $\kk$-algebra. 
By Proposition \ref{prop_finitely_generated}, this implies that the filtration $\cF_\Delta$ is finitely generated, and hence is a projective filtration on $R$.
Since $X$ is normal by Proposition \ref{prop_normal_punctured}, Proposition \ref{prop_proj_comp} produces 
a normal projective variety $Y_\Delta$, and a reduced effective Weil divisor $D_\Delta$ on $Y_\Delta$ such that $X=Y_\Delta \setminus D_\Delta$.

\section{Log Calabi--Yau compactifications from filtrations}
\label{sec_log_cy}

In \S\ref{sec_algebras_cone}, we review the algebras associated with cone complexes, 
obtained by gluing monoid algebras. In \S\ref{sec_lcy_statement}, we state Theorem \ref{thm_main_C} (Theorem~\ref{thm_intro_C}), which provides a general criterion for constructing log Calabi--Yau compactifications from filtrations. In \S\ref{sec_lcy_character}, we apply this criterion to the compactifications of the $SL(2,\CC)$ character varieties reviewed in \S\ref{sec_compactifications_SL2}, to prove Theorems
\ref{thm_main_B}--\ref{thm_main_A} (Theorems ~\ref{thm_intro_A}--\ref{thm_intro_B}). 
The proof of Theorem \ref{thm_main_C} (Theorem~\ref{thm_intro_C}) is given in \S\ref{section_proof}.

\subsection{Algebras of cone complexes}
\label{sec_algebras_cone}

Roughly speaking, a cone complex is a finite collection of rational polyhedral cones glued along their faces by integral linear maps without self-intersection. We will use the following definition -- see  \cite[II1, Definition 5, p69]{KKFMSD} and \cite[Definition 2.1]{payne2009toric}.

\begin{definition}
A \emph{cone complex} $\Omega$ consists of a topological space $|\Omega|$, together with a finite collection of closed subspaces of $|\Omega|$, called the \emph{cones} of $\Omega$, and, for each cone $\omega$, a finitely generated group $N_\omega$ of continuous real-valued functions on $\omega$, called the \emph{integral linear functions} on $\omega$, such that, writing
$ M_\omega:=\Hom(N_\omega,\ZZ)$, 
the following conditions are satisfied:
\begin{itemize}
    \item[(i)] The evaluation map
    \[
    \phi_\omega:\omega \longrightarrow M_\omega \otimes \RR,
    \qquad
    x\longmapsto \bigl(f\mapsto f(x)\bigr),
    \]
    is a homeomorphism from $\omega$ onto a rational polyhedral cone in the real vector space $M_\omega \otimes \RR$ endowed with the lattice $M_\omega \subset M_\omega \otimes\RR$.
    \item[(ii)] For every face $\tau$ of $\phi_\omega(\omega)$, the preimage
$\rho:=\phi_\omega^{-1}(\tau)$
    is a cone of $\Omega$, and
\[ N_\rho=\{\,f|_\rho \mid f\in N_\omega\,\}\,.\]
    \item[(iii)] The underlying topological space $|\Omega|$ is the disjoint union of the relative interiors of the cones of $\Omega$.
\end{itemize}
\end{definition}

\begin{definition}
    Let $\Omega$ be a cone complex. A point $x \in 
    |\Omega|$ is \emph{integral} if there exists a cone $\omega$ of $\Omega$ such that $x$ is an integral point of $\omega$, that is,  $\phi_\omega(x) \in M_\omega$. 
    We denote the set of integral points of $\Omega$ by $\Omega(\ZZ)$.
\end{definition}

\begin{definition}
Let $\Omega$ be a cone complex. An integral piecewise linear function $h$ on $\Omega$ is a continuous function $h: |\Omega| \rightarrow \RR$ whose restriction to every cone $\omega$ of $\Omega$
 is an integral linear function, that is, $h|_\omega \in N_\omega$.
\end{definition}

\begin{remark}
    An integral piecewise linear function $h: |\Omega| \rightarrow \RR$ takes integer values on the set of integral points, that is,  $h(\Omega(\ZZ)) \subset \ZZ$. 
\end{remark}

\begin{definition} \label{def_algebra_cone}
The \emph{algebra of a cone complex $\Omega$} is the $\kk$-algebra $\kk[\Omega]$ with underlying $\kk$-vector space 
\[ \kk[\Omega]= \bigoplus_{m \in \Omega(\ZZ)} \kk z^m \,, \]
and with product given on basis elements by:
\[ z^m z^{m'}=\begin{cases}
    z^{\phi_\omega^{-1}(\phi_\omega(m)+\phi_\omega(m'))} & \text{if there exists a cone } \omega \text{ of }\Omega\text{ such that }m,m' \in \omega \\
    0 & \text{else.}
\end{cases}\]
\end{definition}

\begin{remark}
When $\Omega$ is a simplicial complex, the associated algebra $\kk[\Omega]$ is known as the Stanley--Reisner algebra of $\Omega$; see \cite[Chapter II]{stanley_book}.
\end{remark}

A non-negative integral piecewise linear function $h: |\Omega| \rightarrow \RR_{\geq 0}$ on a cone complex $\Omega$ defines a $\ZZ_{\geq 0}$-grading $\deg_h$ on the algebra $\kk[\Omega]$, given by
\[ \deg_h(z^m):= h(m) \,.\]

\subsection{Statement of the log Calabi--Yau criterion}
\label{sec_lcy_statement}

The following theorem is the main technical result of this paper and is stated as Theorem~\ref{thm_intro_C} in the introduction. It gives a sufficient criterion for projective compactifications arising from filtrations to be log canonical and log Calabi--Yau. The crucial assumption is that the associated graded algebra is the algebra of a cone complex, in the sense of \S\ref{sec_algebras_cone}, homeomorphic to the cone over a sphere.

\begin{theorem}\label{thm_main_C}
Let $\cF=(F_d)_{d\geq 0}$ be a projective filtration on a $\kk$-algebra $R$, and let $(Y,D)$ be the associated projective compactification of $X=\Spec(R)$. Assume that:
\begin{itemize}
\item[(i)] $R$ is an integrally closed domain, equivalently, $X$ is a normal variety.
\item[(ii)] The degree-like function $\delta$ defining $\cF$ is a subdegree.
\item[(iii)] There exist a cone complex $\Omega$ and a non-negative integral piecewise linear function $ h:|\Omega|\longrightarrow \RR_{\geq 0}$ such that the intersection of any two cones of $\Omega$ is a cone of $\Omega$, the level set $h^{-1}(1)$ is homeomorphic to a sphere, and the algebra $\kk[\Omega]$, endowed with the grading induced by $h$, is isomorphic to $\gr_{\cF}(R)$ as a $\ZZ_{\geq 0}$-graded algebra.
\end{itemize}
Then:
\begin{itemize}
\item[(i)] $Y$ is a normal projective variety and $D$ is a reduced effective Weil divisor on $Y$,
\item[(ii)] $Y$ is Cohen--Macaulay and $(Y,D)$ is log canonical, and
\item[(iii)] $(Y,D)$ is log Calabi--Yau, that is, $K_Y+D\sim 0$.
\end{itemize}
\end{theorem}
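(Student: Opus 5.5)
Conclusion (i) is immediate from Proposition \ref{prop_proj_comp}(i)--(ii), using hypotheses (i) and (ii). For (ii) and (iii) I plan to degenerate the pair $(Y,D)$ to a pair whose components are toric, and then transfer properties from that special fiber back to $(Y,D)$.

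\textbf{Construction of the degeneration.} I would form the doubly graded Rees-type algebra
\[ \mathcal{A}:=\bigoplus_{d\geq 0}\bigoplus_{r\le d} (F_r\, s^{r})\, t^{d}\subset R[s^{\pm},t], \]
viewed over $\kk[u]$, where $u$ has bidegree $(1,1)$ in a suitable normalization. Equivalently, this is the Rees algebra of $\mathrm{Rees}_\cF(R)$ with respect to its induced filtration, which geometrically is a base change along $t\mapsto xy$. Set $\mathcal{Y}:=\Proj_{\kk[u]}\mathcal{A}$. For $u\ne 0$ the fiber is $Y$, with boundary $D$. For $u=0$ the fiber is $Y_0=\Proj(\mathrm{Rees}_{\cF^0}(\gr_\cF R))$, the projective cone compactification of Example \ref{Example:projcone} applied to $R^0=\gr_\cF(R)\simeq \kk[\Omega]$, with boundary $D_0=\Proj\kk[\Omega]$. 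Flatness holds because $u$ is a non-zero divisor, as with $t$ for Rees algebras.

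\textbf{Analysis of the central fiber.} By hypothesis (iii), $\mathrm{Rees}_{\cF^0}(\kk[\Omega])=\kk[\widehat\Omega]$, where $\widehat\Omega$ is the cone complex obtained from $\Omega$ by adjoining the extra ray for $t$: each cone $\omega$ becomes $\widehat\omega=\{(m,d): d\ge h(m)\}$. Hence $Y_0$ is a union of projective toric varieties $\Proj\kk[\widehat\omega]$ glued along toric strata, and $D_0$ is the union of their toric boundary pieces at $d=h(m)$. The condition that cones intersect in cones ensures the gluing is along faces without self-intersections. I would then prove three facts about $Y_0$:
\begin{itemize}
\item[(a)] $Y_0$ is Cohen--Macaulay. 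The link of the cone complex is a sphere, so the Stanley--Reisner-type algebra $\kk[\Omega]$ is Cohen--Macaulay by Hochster/Reisner-type criteria for cone complexes (shellability is not needed: the sphere is a homology manifold, and normal monoid algebras are CM). Coning preserves this.
\item[(b)] $Y_0$ is $S_2$ and nodal in codimension one, since each codimension-one cone of $h^{-1}(1)$ lies in exactly two maximal cones. Each component is toric, so $(Y_0,D_0)$ is semi log canonical.
\item[(c)] $K_{Y_0}+D_0\sim 0$. Glue the toric forms $d\log z^{m_1}\wedge\cdots$ on the components; orientability of the sphere lets one choose signs so that the residues match along double loci. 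This gives a nowhere-vanishing section of $\omega_{Y_0}(D_0)$, and the Cohen--Macaulay property identifies $\omega_{Y_0}$ with the dualizing sheaf.
\end{itemize}

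\textbf{Extension to the general fiber.} Since $\mathcal{Y}\to\mathbb{A}^1$ is flat with Cohen--Macaulay central fiber, nearby fibers, and hence $Y$, are Cohen--Macaulay; $\mathcal{G}_m$-equivariance spreads openness to all fibers. The relative dualizing sheaf $\omega_{\mathcal{Y}/\mathbb{A}^1}(\mathcal{D})$ restricts on $Y_0$ to a trivial sheaf. By upper semicontinuity of $h^0$ and triviality on the special fiber, using $H^1(Y_0,\cO)=0$ (the sphere is acyclic in the relevant degree, a point to check), it is trivial on the general fiber, giving $K_Y+D\sim 0$. Log canonicity follows by inversion of adjunction: $(\mathcal{Y},\mathcal{D}+Y_0)$ is lc near $Y_0$ because $(Y_0,D_0)$ is slc and $K+\mathcal{D}+Y_0$ is Cartier. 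Openness of lc in families together with the $\mathbb{G}_m$-action then shows $(Y,D)$ is lc.

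\textbf{Main obstacle.} I expect the hardest step to be showing that the central fiber is semi log canonical and Gorenstein with $K_{Y_0}+D_0\sim 0$: the sign/orientation gluing of the toric volume forms, together with verifying that $\mathcal{D}$ is $\QQ$-Cartier on the total space so that inversion of adjunction applies. I would first try to follow the corresponding arguments of \cite{GHKK, keel2024log}.
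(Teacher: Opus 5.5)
Your overall route is the paper's. You use the canonical degeneration obtained by base change along $t=xy$, and identify its central fiber with the stable toric variety over the ball $h^{-1}([0,1])$. On that fiber you establish Cohen--Macaulayness, semi log canonicity, $\omega_{Y_0}(D_0)\simeq\cO_{Y_0}$ and $H^1(Y_0,\cO_{Y_0})=0$. You then deform these properties to the general fiber and use $\G_m$-equivariance.

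Two of your variations are legitimate.
\begin{itemize}
\item For Cohen--Macaulayness, you apply a Reisner/Yuzvinsky-type criterion to the sphere and use $\mathrm{Rees}_{\cF^0}(\kk[\Omega])\simeq\kk[\Omega][w]$. The paper instead applies Alexeev's criterion directly to the ball.
\item For log canonicity, you use Kawakita's inversion of adjunction rather than Koll\'ar's local stability result from Lemma~\ref{lem_central_general_technical}. This needs $\cY$ normal near $Y_0$. That holds: $\cY$ is Cohen--Macaulay there and $R_1$, since $Y_0$ is a reduced Cartier divisor and $Y$ is normal. It also needs the version of inversion of adjunction allowing non-normal $Y_0$, where the different on the normalization is the conductor plus the toric boundary.
\end{itemize}

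The step you have not supplied is the claim that $\omega_{\cY/\A^1}(\cD)$ restricts on $Y_0$ to a trivial sheaf, and everything after it depends on it:
\begin{itemize}
\item invertibility of $\omega_{\cY/\A^1}(\cD)$ near $Y_0$;
\item hence Cartierness of $K_{\cY}+\cD$, which is what inversion of adjunction actually needs (not $\QQ$-Cartierness of $\cD$, which need not hold);
\item and triviality of $\omega_Y(D)$.
\end{itemize}
Since $\cD$ is not Cartier, $\omega_{\cY}(\cD)$ is only a reflexive rank one sheaf. A priori its restriction to $Y_0$ need not be $S_2$, nor agree with $\omega_{Y_0}(D_0)$.

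The paper closes this with a Cohen--Macaulay argument:
\begin{itemize}
\item $D_0$ is Cohen--Macaulay, so $\cD$ and $\cY$ are Cohen--Macaulay near $Y_0$.
\item Hence $\cO_{\cY}(-\cD)$ is Cohen--Macaulay by the depth lemma, and so is its dual $\omega_{\cY}(\cD)=\mathcal{H}om(\cO_{\cY}(-\cD),\omega_{\cY})$.
\item Therefore $\omega_{\cY}(\cD)|_{Y_0}$ is Cohen--Macaulay, hence $S_2$.
\item The natural map from it to $\omega_{Y_0}(D_0)\simeq\cO_{Y_0}$ is an isomorphism at the generic points of the double locus and of $D_0$, hence everywhere.
\item Flatness and Nakayama then give invertibility near $Y_0$.
\item $H^1(Y_0,\cO_{Y_0})=0$ then gives triviality on nearby fibers. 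This goes through cohomology and base change; semicontinuity of $h^0$ alone is not enough.
\end{itemize}
In your setup $\cD=D\times\A^1$ with $D=D_0=\Proj\kk[\Omega]$, so your step (a) already gives Cohen--Macaulayness of $D_0$ directly. The paper instead deduces it from $\cO_{Y_0}(-D_0)\simeq\mathcal{H}om(\omega_{Y_0}(D_0),\omega_{Y_0})$.

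Two smaller corrections.
\begin{itemize}
\item As written, $\mathcal{A}$ is only the $r\le d$ part and has no parameter of degree zero for the Proj grading, so it does not define a family over $\A^1$. The correct algebra is $\bigoplus_{r,d\ge 0}F_{\min(r,d)}x^ry^d$, with $\Proj$ taken in the $y$-degree over $\kk[x]$ and parameter $x$ of bidegree $(1,0)$. This is what your ``equivalently'' description produces.
\item The vanishing of $H^1(Y_0,\cO_{Y_0})$ comes from the ball $h^{-1}([0,1])$, via $H^i(P[\Delta],\cO)\simeq H^i(|\Delta|,\QQ)\otimes\kk$, not from the sphere. When $n=2$, as for the cubic surfaces, $H^1(h^{-1}(1),\QQ)\neq 0$ while $H^1(Y_0,\cO_{Y_0})=0$.
\end{itemize}
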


The proof of Theorem~\ref{thm_main_C} is carried out in \S\ref{section_proof}, with the final step completed in \S\ref{section_end_proof}. In the subsequent section  \S\ref{sec_lcy_character}, we apply Theorem~\ref{thm_main_C} to the compactifications of $SL(2,\CC)$ character varieties reviewed in \S \ref{sec_compactifications_SL2}.

\subsection{Log Calabi--Yau compactification of character varieties}
\label{sec_lcy_character}

The following result is stated as Theorem \ref{thm_intro_B} in the introduction. 

\begin{theorem}
\label{thm_main_B}
 The following hold:
\begin{itemize}
\item[(i)] \emph{(Compact case)}
Let $g\geq 2$, and let $\cP=(P,\Gamma)$ consist of a pair of pants decomposition $P$ of $\bS_g$ together with an embedded dual graph $\Gamma$. Then $Y_\cP$ is Cohen--Macaulay, and the compactification $(Y_\cP,D_\cP)$ of $\mathrm{Ch}_g^{SL(2,\CC)}$ is log canonical and log Calabi--Yau.

\item[(ii)] \emph{(Punctured case)}
Let $g\geq 0$ and $n>0$ satisfy $2g-2+n>0$, let $t\in\CC^n$, and let $\Delta$ be an ideal triangulation of $\bS_{g,n}$. Then
 $Y_\Delta$ is Cohen--Macaulay, and the compactification $(Y_\Delta,D_\Delta)$ of $\mathrm{Ch}_{g,n}^{SL(2,\CC),t}$ is log canonical and log Calabi--Yau.
\end{itemize}
\end{theorem}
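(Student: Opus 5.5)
The plan is to deduce Theorem \ref{thm_main_B} directly from Theorem \ref{thm_main_C}, by verifying its three hypotheses for the filtrations $\cF_\cP$ and $\cF_\Delta$ reviewed in \S\ref{sec_compactifications_SL2}. The two cases are formally identical, so I treat them in parallel. I write $\iota$ for $\iota_\cP$ or $\iota_\Delta$, $MF$ for $MF_g$ or $MF_{g,n}^{\mathrm{ess}}$, and $MC$ for $MC_g$ or $MC_{g,n}^{\mathrm{ess}}$.

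Hypotheses (i) and (ii) are already available.
\begin{itemize}
\item[(i)] Normality of $X$ is Proposition \ref{prop_normal_closed}, respectively Proposition \ref{prop_normal_punctured}.
\item[(ii)] That $\delta_\cP$ and $\delta_\Delta$ are subdegrees is recalled from \cite[Theorem 1.5]{ayilliath2025geometric} and \cite[\S 2.4]{farajzadeh2023compactifications}.
\item[] Projectivity of the filtrations was checked in \S\ref{sec_compactifications_SL2} via Gordan's lemma and Proposition \ref{prop_finitely_generated}.
\end{itemize}

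The work is in hypothesis (iii). We build the cone complex $\Omega$ with $|\Omega| := \iota(MF) = \bigcup_\ell \Lambda_\ell \subset \RR_{\geq 0}^N$.
\begin{itemize}
\item The cones of $\Omega$ are the $\Lambda_\ell$ together with all their faces.
\item On a cone $\omega$, the integral linear functions $N_\omega$ are the restrictions to $\omega$ of the linear functionals $\phi$ on $\RR^N$ with $\phi(Q_\omega) \subset \ZZ$, where $Q_\omega = \iota(MC)\cap \omega$. This makes the integral points of $\omega$ exactly $Q_\omega$: that lattice is saturated of finite index, and the $Q_\ell$ are stated to be saturated.
\item For $h$ take the restriction of $\sum_j x_j$. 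It is linear, hence piecewise linear, non-negative, and integral on each $Q_\omega$. Since $h(\iota(\gamma)) = \delta(T_\gamma)$, the degree $\deg_h(z^{\iota(\gamma)})$ equals the filtration degree of $\overline{T}_\gamma$.
\end{itemize}

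We then check the required properties.
\begin{itemize}
\item The space $|\Omega|$ is the disjoint union of relative interiors of its cones. The intersection of two cones is a cone because the decomposition is the coarsest polyhedral decomposition of the image, glued along faces.
\item Since the image lies in the positive orthant minus the origin away from $0$, $h>0$ there, so $h^{-1}(1)$ is a transversal slice of the cone. It is therefore homeomorphic to $(|\Omega|\setminus\{0\})/\RR_{>0} \cong (MF\setminus\{0\})/\RR_{>0}$. By \cite[Theorem 6.15]{thurston} and \cite[Theorem 11.1]{thurston} this is a sphere of dimension $6g-7$, respectively $6g-7+2n$. Here the homeomorphism uses injectivity of $\iota$ and its equivariance for scaling.
\item The map $z^{\iota(\gamma)} \mapsto \overline{T}_\gamma$ is a bijection of bases, since $\iota$ is injective and $\Omega(\ZZ)=\iota(MC)$.
\item Comparing Definition \ref{def_algebra_cone} with \eqref{eq_algebra_compact} and \eqref{eq_algebra_punctured}, the multiplication rules agree: "both lie in some $\Lambda_\ell$" is the same as "both lie in some cone," because every cone is a face of some $\Lambda_\ell$. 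Addition in $\phi_\omega$-coordinates is addition in $\RR^N$.
\item The map is graded by the computation of $h$ above.
\end{itemize}

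Theorem \ref{thm_main_C} then gives normality, the Cohen--Macaulay property, log canonicity, and $K_Y+D\sim 0$.

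The main obstacle I expect is the identification $\Omega(\ZZ)=\iota(MC)$, that is, choosing $N_\omega$ correctly and confirming condition (ii) of the cone complex definition. One needs the restriction of $N_\omega$ to a face $\rho$ to equal $N_\rho$. This requires $Q_\rho$ to be saturated in the lattice it generates and that lattice to be a saturated sublattice of the lattice of $Q_\omega$. I would try to get this from saturation and finite index of each $Q_\ell$ in $\Lambda_\ell\cap\ZZ^N$, together with the fact that for any face, $\iota(MC)\cap\rho = Q_\ell\cap\rho$ is saturated in the face lattice. If face compatibility fails for this choice, the fallback is to take $N_\omega$ to be the restrictions of functionals integral on the full lattice $\ZZ^N\cap\mathrm{span}(\omega)$. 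In that case I would check separately that the finite-index discrepancy is uniform, using the explicit parity description of multicurve coordinates.
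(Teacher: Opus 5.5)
Your proposal follows the paper's proof: you get normality and the subdegree property from the cited results, then realize $\gr_\cF(R)$ as $\kk[\Omega]$ for the cone complex $\bigcup_\ell\Lambda_\ell$ with integral structure given by the $Q_\ell$ and $h$ the sum of coordinates (level set $(MF\setminus\{0\})/\RR_{>0}$, a sphere), and apply Theorem \ref{thm_main_C}. The face-compatibility obstacle you flag in fact resolves for your primary choice of $N_\omega$, so the fallback is not needed: saturation gives $Q_\ell=\Lambda_\ell\cap Q_\ell^{\mathrm{gp}}$, and for any face $\rho$ the points $\rho\cap Q_\ell^{\mathrm{gp}}=\iota(MC)\cap\rho$ (independent of $\ell$) generate the saturated sublattice $Q_\ell^{\mathrm{gp}}\cap\mathrm{span}(\rho)$, so $N_\rho$ is exactly the restriction of $N_{\Lambda_\ell}$ from every maximal cone containing $\rho$.
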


\begin{proof}
We verify that the assumptions of Theorem \ref{thm_main_C} are satisfied in each of the two cases.

In the compact case, the affine variety $X=\mathrm{Ch}_g^{SL(2,\CC)}$ is normal by Proposition \ref{prop_normal_closed}. 
As reviewed in \S\ref{sec_review_compact}, following \cite{ayilliath2025geometric}, the pair $(Y_\cP, D_\cP)$ is the projective compactification of $X$ associated to a projective filtration $\cF_\cP$ on $R=H^0(X,\cO_X)$ defined by the subdegree $\delta_\cP$ given by \eqref{eq_delta_P}. 
The associated graded algebra $\mathrm{gr}_{\cF_\cP}(R)$, described by  \eqref{eq_algebra_compact},
is isomorphic to the algebra of a cone complex $\Omega_\cP$, given by the union of cones $\bigcup_\ell \Lambda_\ell \subset \RR_{\geq 0}^{9g-9}$ endowed with the integral structure defined by the monoids $Q_\ell=\iota_\cP(MC_g) \cap \Lambda_\ell$. Moreover, \eqref{eq_iota_P}-\eqref{eq_delta_P} show that the grading on $\mathrm{gr}_{\cF_\cP}(R)$
is induced by the sum of coordinates function on $\RR_{\geq 0}^{9g-9}$.
The corresponding level set in $\Omega_\cP$ is homeomorphic to the sphere of dimension $6g-7$. 
Hence, all the assumptions of Theorem \ref{thm_main_C} are satisfied and the compact case of Theorem \ref{thm_main_B} follows.

In the punctured case, the argument is completely analogous. First, the affine variety $X=\mathrm{Ch}_{g,n}^{SL(2,\CC), t}$ is normal by Proposition \ref{prop_normal_punctured}. 
Then, as reviewed in \S\ref{sec_review_punctured}, following \cite{farajzadeh2023compactifications}, the pair $(Y_\Delta, D_\Delta)$ is the projective compactification of $X$ associated to a projective filtration $\cF_\Delta$ on $R=H^0(X,\cO_X)$ defined by the subdegree $\delta_\Delta$ given by \eqref{eq_delta_Delta}. 
The associated graded algebra $\mathrm{gr}_{\cF_\Delta}(R)$, described by  \eqref{eq_algebra_punctured},
is isomorphic to the algebra of a cone complex $\Omega_\Delta$, given by the union of cones $\bigcup_\ell \Lambda_\ell \subset \RR_{\geq 0}^{6g-6+3n}$ endowed with the integral structure defined by the monoids $Q_\ell=\iota_\Delta(MC_{g,n}^{\mathrm{ess}}) \cap \Lambda_\ell$. Moreover, \eqref{eq_iota_Delta}-\eqref{eq_delta_Delta} show that the grading on $\mathrm{gr}_{\cF_\Delta}(R)$
is induced by the sum of coordinates function on $\RR_{\geq 0}^{6g-6+3n}$.
The corresponding level set in $\Omega_\Delta$ is homeomorphic to the sphere of dimension $6g-7+2n$. 
Hence, all the assumptions of Theorem \ref{thm_main_C} are satisfied and the punctured case of Theorem \ref{thm_main_B} follows.
\end{proof}

Finally, we state and prove Theorem~\ref{thm_main_A}, which is Theorem \ref{thm_intro_A} in the introduction. The proof relies on the following general lemma, which is a consequence of the Minimal Model Program and allows one to obtain dlt log Calabi--Yau compactifications from log canonical ones. Related results include \cite[Theorem 3.1]{kollar_lc_du_bois}, \cite[Theorem 10.4]{fujino_fundamental}, and \cite[Lemma 4.3.1]{mauri2022geometric}.

\begin{lemma} \label{lem_dlt_modifications}
Let $X$ be a $\QQ$-factorial normal variety with canonical singularities. Assume that $X$ admits a log canonical log Calabi--Yau compactification. Then, $X$ admits a $\QQ$-factorial dlt log Calabi--Yau compactification.
\end{lemma}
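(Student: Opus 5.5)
Starting from a log canonical log Calabi--Yau compactification $(Y,D)$ of $X$, we will pass to a dlt modification of $(Y,D)$ and check that it neither alters the open part $X$ nor breaks the relation $K+D\sim 0$. Concretely, we invoke the existence of $\QQ$-factorial dlt modifications (Hacon's theorem; see \cite[Theorem 10.4]{fujino_fundamental} or \cite[Theorem 3.1]{kollar_lc_du_bois}). This provides a projective birational morphism $f\colon Y'\to Y$ from a $\QQ$-factorial variety $Y'$ such that, with $D'$ the sum of the strict transform of $D$ and all $f$-exceptional divisors, the pair $(Y',D')$ is dlt and
\[ K_{Y'}+D' = f^\star(K_Y+D)\,. \]
Since $K_Y+D\sim 0$, we get $K_{Y'}+D'\sim 0$. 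Moreover, $Y'$ is projective because $f$ is projective and $Y$ is projective.

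The key step is to show that we can arrange $f$ to be an isomorphism over $X=Y\setminus D$, so that $Y'\setminus D'\simeq X$. There are two things to check.

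\emph{No exceptional divisor is centred in $X$.} Write $K_{Y'}+D'=f^\star(K_Y+D)$. For an $f$-exceptional prime divisor $E$ whose centre meets $X$, its coefficient in $D'$ is $1=-a(E,Y,D)$. Over $X$, however, the discrepancy is computed with respect to $(X,0)$, and $X$ has canonical singularities, so $a(E,X,0)\geq 0$. This is a contradiction. In Hacon's construction, exceptional divisors extracted by $f$ have discrepancy exactly $-1$, so no such $E$ occurs over $X$.

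\emph{No small modification over $X$.} Even without exceptional divisors over $X$, the map $f$ might a priori be a small modification there. Since $X$ is $\QQ$-factorial, a projective birational morphism onto $X$ with no exceptional divisors is an isomorphism. Indeed, take an $f$-ample divisor $A$ on $f^{-1}(X)$. Its pushforward $f_\star A$ is $\QQ$-Cartier on $X$, and $A=f^\star f_\star A$ because there are no exceptional divisors. Then $A$ is $f$-ample and $f$-trivial at the same time, so $f$ is finite. A finite birational morphism onto a normal variety is an isomorphism.

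Combining the two checks, $Y'\setminus D'=f^{-1}(X)\simeq X$. Hence $(Y',D')$ is a $\QQ$-factorial dlt log Calabi--Yau compactification of $X$. The remaining points are formal: $D'$ is reduced, and $Y'$ is normal, as the output of the MMP construction.

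I expect the main obstacle to be the precise form of the dlt-modification theorem. We need a version that extracts only divisors with log discrepancy $0$, so that $D'$ has coefficient $1$ everywhere, is reduced, and the relation $K+D\sim 0$ is preserved. The first thing I would try is to quote \cite[Theorem 10.4]{fujino_fundamental}, which provides exactly this.
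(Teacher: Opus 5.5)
Your proposal is correct and follows the paper's proof. You take a $\QQ$-factorial dlt modification with $K_{Y'}+D'=f^\star(K_Y+D)$, rule out exceptional divisors over $X$ by comparing their discrepancy $-1$ with the canonical singularities of $X$, and use $\QQ$-factoriality of $X$ to rule out small modifications over $X$; your $f$-ample divisor argument spells out the purity of the exceptional locus that the paper simply asserts.

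One step is skipped, and the paper skips it too. You go directly from ``$f$ is an isomorphism over $X$'' to $Y'\setminus D'=f^{-1}(X)$, but the inclusion $Y'\setminus D'\subseteq f^{-1}(X)$ needs a line. When $X$ is affine, which is the case used in Theorem~\ref{thm_main_A}, it follows as below. The complement of $f^{-1}(X)\simeq X$ in the normal variety $Y'\setminus D'$ has codimension $\geq 2$, because every prime divisor of $Y'$ mapping into $D$ is a component of $D'$. By Hartogs, the canonical morphism $Y'\setminus D'\to \Spec H^0(X,\cO_X)=X\subset Y$ is defined and agrees with $f$ on $f^{-1}(X)$. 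By separatedness of $Y$ it therefore coincides with $f$, which forces $f(Y'\setminus D')\subseteq X$.
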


\begin{proof}
Let $(Y,D)$ be a log canonical log Calabi--Yau compactification of $X$. By the existence of minimal dlt modifications, \cite[Theorem 3.1]{kollar_lc_du_bois}--\cite[Theorem 10.4]{fujino_fundamental}, there exists a projective birational morphism $f: \widetilde{Y} \rightarrow Y$ such that $\widetilde{Y}$ is $\QQ$-factorial, and denoting by $\widetilde{D}$ the union of the strict transforms of the components of $D$ and of the exceptional divisors of $f$, the pair $(\widetilde{Y}, \widetilde{D})$ is dlt, and $K_{\widetilde{Y}}+\widetilde{D} = f^\star(K_Y+D) \sim 0$, that is, $(\widetilde{Y}, \widetilde{D})$ is log Calabi--Yau. 
It remains to show that $f$ induces an isomorphism $\widetilde{Y}\setminus \widetilde{D}\simeq X$. Since $X$ is $\QQ$-factorial, the exceptional locus of $f$ over $X$ is of pure codimension one. Moreover, because $K_{\widetilde{Y}}+\widetilde{D}\sim 0$, every exceptional divisor of $f$ has discrepancy $-1$. On the other hand, $X$ is canonical, so every divisor over $X$ has discrepancy $\geq 0$. Therefore $f$ has no exceptional divisors over $X$, and so $f$ is an isomorphism over $X$.
\end{proof}

\begin{theorem} \label{thm_main_A}
Conjecture \ref{conj_main} holds for $G=SL(2,\CC)$. More precisely:
\begin{itemize}
\item[(i)] \emph{(Compact case)}
For every integer $g\geq 0$, the $SL(2,\CC)$-character variety $\mathrm{Ch}_g^{SL(2,\CC)}$ admits a dlt log Calabi--Yau compactification.

\item[(ii)] \emph{(Punctured case)}
For all integers $g\geq 0$ and $n>0$, and every $t\in\CC^n$, the relative $SL(2,\CC)$-character variety $\mathrm{Ch}_{g,n}^{SL(2,\CC),t}$ admits a log canonical log Calabi--Yau compactification. If, moreover, $t$ is Kostov generic, then $\mathrm{Ch}_{g,n}^{SL(2,\CC),t}$ admits a dlt log Calabi--Yau compactification.
\end{itemize}
\end{theorem}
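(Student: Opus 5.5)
The plan is to split into the ranges of $(g,n)$ covered by Theorem \ref{thm_main_B} and the low-complexity exceptional cases, and in each case upgrade log canonical to dlt via Lemma \ref{lem_dlt_modifications} where needed.

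\emph{Compact case.} For $g\geq 2$, choose any pants decomposition with an embedded dual graph $\cP$. By Theorem \ref{thm_main_B}(i), $(Y_\cP,D_\cP)$ is a log canonical log Calabi--Yau compactification of $X=\mathrm{Ch}_g^{SL(2,\CC)}$. By \cite{shu2024singularities}, $X$ is normal, $\QQ$-factorial and has canonical singularities. Lemma \ref{lem_dlt_modifications} then produces a ($\QQ$-factorial) dlt log Calabi--Yau compactification.

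For $g=0$, $\pi_1$ is trivial, so $\mathrm{Ch}_0$ is a point. This is its own projective compactification with $D=\emptyset$, and it is trivially dlt and log Calabi--Yau.

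For $g=1$, the variety $\mathrm{Ch}_1^{SL(2,\CC)}$ is $(\CC^\star)^2/(\ZZ/2)$ under inversion, via diagonalizable commuting pairs. Here I would take a smooth projective toric surface $Z$ whose fan is symmetric under $-1$ (for instance $\PP^1\times\PP^1$). The involution preserves the toric boundary $B$ and the form $\frac{dx}{x}\wedge\frac{dy}{y}$, so $(Z/\iota, B/\iota)$ is log Calabi--Yau and log canonical; log canonicity passes to finite quotients where the pullback of $K+D$ is $K_Z+B$, because the action is free in codimension one. This gives a log canonical compactification. Since $\mathrm{Ch}_1$ is $\QQ$-factorial canonical by \cite{shu2024singularities}, or directly as a quotient singularity by a symplectic involution, Lemma \ref{lem_dlt_modifications} upgrades it to dlt.

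\emph{Punctured case.} If $2g-2+n>0$, an ideal triangulation $\Delta$ of $\bS_{g,n}$ exists, and Theorem \ref{thm_main_B}(ii) gives a log canonical log Calabi--Yau compactification of $\mathrm{Ch}_{g,n}^{SL(2,\CC),t}$. The remaining cases are $(g,n)=(0,1)$ and $(0,2)$.
\begin{itemize}
\item For $(0,1)$, $\pi_1$ is trivial. The relative variety is a point if $t_1=2$ and empty otherwise; both are fine, with the empty set trivially satisfying the conjecture.
\item For $(0,2)$, $\pi_1=\ZZ$, and the relative variety is the fiber of $\mathrm{tr}$ over $t$ subject to the puncture relation $\rho(\gamma_2)=\rho(\gamma_1)^{-1}$. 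This forces $t_1=t_2$, and then the fiber is a point; otherwise it is empty.
\end{itemize}

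For Kostov generic $t$, \cite[Proposition 4.5]{simpson2016dual} shows $\mathrm{Ch}_{g,n}^{SL(2,\CC),t}$ is smooth, hence canonical and $\QQ$-factorial. Lemma \ref{lem_dlt_modifications} then applies to the log canonical compactification from Theorem \ref{thm_main_B}(ii), producing a dlt one. In the degenerate cases $(0,1)$ and $(0,2)$, Kostov genericity makes the variety empty, or it is a point, which is trivially dlt.

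The main obstacle is the case-by-case bookkeeping for the exceptional $(g,n)$, and in particular $g=1$ in the compact case. There one must justify that the toric quotient compactification is log canonical, or alternatively that $\mathrm{Ch}_1$ is $\QQ$-factorial canonical so that Lemma \ref{lem_dlt_modifications} applies. I would rely on \cite{shu2024singularities} for the singularity input in all genera, and on the standard behaviour of log canonical pairs under finite morphisms for the quotient (cf. \cite{KM}).
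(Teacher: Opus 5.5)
Your proof is correct. For the main cases it follows the paper: compact $g\geq 2$, punctured with $2g-2+n>0$, and the Kostov-generic upgrade are all handled by Theorem~\ref{thm_main_B} followed by Lemma~\ref{lem_dlt_modifications}. The only difference there is that the paper cites \cite{bellamy2023symplectic} rather than \cite{shu2024singularities} for $\QQ$-factoriality and canonicity of $\mathrm{Ch}_g^{SL(2,\CC)}$.

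You diverge from the paper in the low-complexity cases.

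\textbf{Compact case $g=1$.} The paper simply cites \cite[Theorem E]{mauri2022geometric}. You instead build the compactification by hand as the quotient of $(\PP^1\times\PP^1,B)$ by the inversion $\iota$. This is self-contained, and the ramification-formula argument for log canonicity and $K+D\sim 0$ is sound. It is in fact already dlt, so Lemma~\ref{lem_dlt_modifications} is not needed:
\begin{itemize}
\item $\iota$ acts freely near $B$, so $B/\iota$ consists of two smooth rational curves meeting transversally at two points of the smooth locus;
\item the four $A_1$ points lie away from the boundary.
\end{itemize}
The price is relying on the standard identification $\mathrm{Ch}_1^{SL(2,\CC)}\simeq(\CC^\star)^2/(\ZZ/2)$. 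The citation avoids that and also covers $SL(k)$.

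\textbf{Punctured cases $(g,n)=(0,1),(0,2)$.} You treat these explicitly, whereas the paper's proof invokes Theorem~\ref{thm_main_B} for all $(g,n)$ even though that theorem assumes $2g-2+n>0$. Your computation closes this small gap correctly: the relative character variety is a point or empty, and Kostov genericity forces it to be empty.
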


\begin{proof} In the compact case,  the statement is trivial for $g=0$, is proved in
\cite[Theorem E]{mauri2022geometric}  
for $g=1$, and so we can assume $g \geq 2$.
By Theorem \ref{thm_main_B}, both $\mathrm{Ch}_g^{SL(2,\CC)}$ and $\mathrm{Ch}_{g,n}^{SL(2,\CC),t}$
admit log canonical log Calabi--Yau compactifications. By \cite[Corollary 1.2 -- Theorem 1.3]{bellamy2023symplectic}, the character variety $\mathrm{Ch}_g^{SL(2,\CC)}$ is $\QQ$-factorial and has canonical singularities, and so it follows from Lemma \ref{lem_dlt_modifications} that it admits a dlt log Calabi--Yau compactification. 
On the other hand, by \cite[Proposition 4.5]{simpson2016dual}, the relative character variety $\mathrm{Ch}_{g,n}^{SL(2,\CC),t}$ is smooth, and so in particular $\QQ$-factorial with canonical singularities, whenever $t$ is Kostov generic. Therefore, Lemma \ref{lem_dlt_modifications} 
implies that $\mathrm{Ch}_{g,n}^{SL(2,\CC),t}$ admits a dlt log Calabi--Yau 
compactification when $t$ is Kostov generic.
\end{proof}

\section{Proof of the log Calabi--Yau criterion}
\label{section_proof}

In this section, we prove Theorem \ref{thm_main_C} by a degeneration argument.
In \S\ref{section_degeneration}, we construct canonical degenerations associated with compactifications arising from filtrations. In \S\ref{section_vertices}, we study the central fibers of these degenerations for filtrations satisfying the hypotheses of Theorem~\ref{thm_main_C}. Finally, in \S\ref{section_extending}, we show how properties of the central fiber extend to the general fiber, and we complete the proof of Theorem~\ref{thm_main_C} in \S\ref{section_end_proof}.

\subsection{Degenerations from filtrations}
\label{section_degeneration}
In this section we construct one-parameter degenerations of the projective compactifications constructed from filtrations as in \S\ref{sec_filtrations}.

\subsubsection{Algebraic description of the canonical degeneration}

Let $\cF=(F_d)_{d \geq 0}$ be a projective filtration on a $\kk$-algebra $R$. 
Recall from Definition \ref{def_projective_compactification} that the projective compactification of $X=\Spec(R)$ 
associated with $\cF$ is the pair $(Y,D)$, where $Y=\Proj \left( \mathrm{Rees}_\cF(R) \right)$ and $D=\Proj \left( \gr_\cF(R) \right)$.
Consider the affine scheme $\cX :=\Spec \left( \mathrm{Rees}_\cF(R) \right)$.  The $\kk[t]$-algebra structure on 
$\mathrm{Rees}_\cF(R)$ induces a morphism 
\begin{equation} \label{eq_pi}
\pi: \cX \longrightarrow \A^1=\Spec (\kk[t]) \,,\end{equation}
which is flat since $t$ is a non-zero divisor in $\mathrm{Rees}_\cF(R)$. 
The morphism $\pi$ is $\G_m$-equivariant with respect to the $\G_m$-action by scaling  on 
$\A^1$ and the $\G_m$-action on $\cX$ induced by the $\ZZ_{\geq 0}$-grading of $\mathrm{Rees}_\cF(R)$.
In particular, 
\[ \cX|_{\A^1 \setminus \{0\}} \simeq X \times (\A^1 \setminus \{0\})\,.\] 
Moreover, the central fiber of $\cX$ is 
\[ X_0 := \pi^{-1}(0) = \Spec \left( \gr_\cF(R)\right) \,.\]

In Definition \ref{def_canonical_degeneration} below, we define a particular one-parameter 
degeneration of $(Y,D)$ as a fiberwise compactification 
of $\pi: \cX \rightarrow \A^1$.
First, we define a $\ZZ_{\geq 0} \times \ZZ_{\geq 0}$-graded $\kk$-algebra
\begin{equation}
\label{eq_overline_rees}
     \overline{\mathrm{Rees}}_\cF(R) := \bigoplus_{r \geq 0} \bigoplus_{s \geq 0} (F_r \cap F_s) x^r y^s    \subset R[x,y] \,,
\end{equation}
where $x$ and $y$ are formal variables.
Since $\cF=(F_d)_{d\geq 0}$ is an increasing filtration, we have 
$F_r \cap F_s = F_r$ if $r \leq s$ and 
$F_r \cap F_s=F_s$ if $s \leq r$.
We set
\[ \cY := \Proj_s \left( \overline{\mathrm{Rees}}_\cF(R) \right)\,,\]
where $\Proj_s$ denotes $\Proj$ taken with respect to the $s$-grading.  
Since $F_0=\kk$, the $s$-degree $0$ part of $\overline{\mathrm{Rees}}_\cF(R)$ is 
\[ \bigoplus_{r \geq 0}(F_r \cap F_0) x^r= \bigoplus_{r \geq 0} F_0 x^r= \bigoplus_{r \geq 0} \kk\, x^r =\kk[x]\,,\] 
hence we have an induced projective morphism
\begin{equation} \label{eq_rho}
\rho: \cY \longrightarrow \A^1 = \Spec(\kk[x]) \,,\end{equation}
which is flat since $x$ is a non-zero divisor in $\overline{\mathrm{Rees}}_\cF(R)$.
The morphism $\rho$ is $\mathbb{G}_m$-equivariant with respect to the $\mathbb{G}_m$-action by scaling on $\A^1$ and the $\G_m$-action on $\cY$ induced by the $r$-grading on $\overline{\mathrm{Rees}}_\cF(R)$.

\begin{proposition} \label{prop_degeneration}
Let $\cF=(F_d)_{d\geq 0}$ be a projective filtration on a $\kk$-algebra $R$, and $(Y,D)$ the associated projective compactification of $X=\Spec(R)$. Let $\pi:\cX \rightarrow \A^1$ be the degeneration of $X$ given by \eqref{eq_pi}, and $\rho: \cY \rightarrow \A^1$ be as in \eqref{eq_rho}. Then, the following hold:
\begin{itemize}
    \item[(i)] the quotient map 
    \[ \overline{\mathrm{Rees}}_\cF(R) \longrightarrow \bigoplus_{r \geq 0} \bigoplus_{s \geq 0} (F_r \cap F_s)/(F_r \cap F_{s-1}) x^r y^s \]
    defines a closed embedding 
    \begin{equation} \label{eq_cD}
    \cD:= D \times \A^1 \hookrightarrow \cY\,,
    \end{equation}
    \item[(ii)] $\cY \setminus \cD  \simeq \cX$ and $\rho|_{\cY \setminus \cD} = \pi$,
    \item[(iii)] $\cY|_{\A^1 \setminus \{0\}} \simeq Y \times (\A^1 \setminus \{0\})$,
    \item[(iv)] the central fiber $Y_0 := \rho^{-1}(0)$ is isomorphic to the projective compactification of \[ X_0=\Spec(\gr_\cF(R))\] associated to the filtration $\cF^0=(F^0_d)_{d \geq 0}$ on $\gr_\cF(R)$ defined by $F^0_d := \bigoplus_{0 \leq r \leq d} F_r/F_{r-1}$, that is, 
    \[ Y_0 = \Proj \left( \bigoplus_{d \geq 0} \bigoplus_{0 \leq r \leq d} F_r/F_{r-1} \right) \,.\]
\end{itemize}
\end{proposition}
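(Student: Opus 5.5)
Each of the four items will be checked directly on the bigraded algebra $A:=\overline{\mathrm{Rees}}_\cF(R)=\bigoplus_{r,s\geq 0}(F_r\cap F_s)x^ry^s$. The idea is to localize or take quotients in the $r$-direction, where $x$ is the deformation parameter, and then recover the known constructions by taking $\Proj_s$. The basic identity is $F_r\cap F_s=F_{\min(r,s)}$.

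\textbf{Item (iii).} Inverting $x$ gives
\[
A[x^{-1}]=\bigoplus_{s\geq 0}\Bigl(\bigcup_r F_r\cap F_s\Bigr)x^{\ZZ}y^s=\bigoplus_{s\ge0}F_sy^s\otimes_\kk\kk[x^{\pm}]=\mathrm{Rees}_\cF(R)\otimes_\kk\kk[x^\pm].
\]
Here $y$ plays the role of $t$, and the identification is compatible with the $s$-grading. Taking $\Proj_s$ yields $\cY|_{\A^1\setminus\{0\}}\simeq Y\times(\A^1\setminus\{0\})$.

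\textbf{Item (iv).} We have
\[
A/(x)=\bigoplus_{r,s}\frac{F_{\min(r,s)}}{F_{\min(r-1,s)}}\,x^ry^s .
\]
If $r\leq s$ the summand is $F_r/F_{r-1}$, and if $r>s$ it is $0$. So, for each $s$, the $s$-graded piece is $\bigoplus_{0\leq r\leq s}F_r/F_{r-1}$. This is exactly the Rees algebra of the filtration $\cF^0$ on $\gr_\cF(R)$, as in Example~\ref{Example:projcone}. Since $x$ is a non-zero divisor, $\rho^{-1}(0)=\Proj_s(A/(x))$, which proves (iv).

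\textbf{Item (i).} I will identify $\cD$ with $\Proj_s$ of $A/yA'$, where $A'$ is the ideal generated in positive $s$-degree shifted down by one. Concretely, the $(r,s)$ piece of the quotient is $(F_r\cap F_s)/(F_r\cap F_{s-1})$. For $s\leq r$ this is $F_s/F_{s-1}$, and for $s>r$ it is $0$. The $s$-graded piece is therefore $\bigoplus_{r\geq s}(F_s/F_{s-1})x^r=x^s\,\mathrm{gr}_\cF(R)_s\otimes\kk[x]$. This algebra is isomorphic to $\gr_\cF(R)[x]$, up to the harmless regrading $x^sy^s\mapsto y^s$, which does not change $\Proj_s$ over $\kk[x]$ after the substitution. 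Hence its $\Proj_s$ is $D\times\A^1$.

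The map is surjective in every bidegree, so it gives a closed embedding. To be careful I will present the kernel as the ideal of $A$ generated by the element $y\in(F_0\cap F_1)y$; note $y$ has bidegree $(0,1)$ and $1\in F_0$. I will check that $yA$ in bidegree $(r,s)$ equals $(F_r\cap F_{s-1})x^ry^s$, which is immediate from the description of $A$.

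\textbf{Item (ii).} The complement $\cY\setminus\cD$ is the basic open set $D_+(y)$. Its coordinate ring is the degree-zero part of $A[y^{-1}]$, namely $\bigoplus_r\bigcup_s(F_r\cap F_s)x^ry^{s}y^{-s}=\bigoplus_rF_rx^r=\mathrm{Rees}_\cF(R)$, with $x$ playing the role of $t$. So $D_+(y)\simeq\cX$, and the structure map to $\Spec\kk[x]$ is $\pi$.

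\textbf{Main obstacle.} The step I expect to be hardest is the bookkeeping in (i): matching the quotient with $D\times\A^1$ up to the regrading, and confirming that the kernel is exactly $(y)$. I would write this out bidegree by bidegree. The remaining items are formal localization computations, modulo checking finite generation of $A$, which follows because $A$ is generated by the lifts used in Proposition~\ref{prop_finitely_generated} together with $x$ and $y$.
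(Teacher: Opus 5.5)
Your proof is correct and follows essentially the same route as the paper. Both arguments compute $A/(y)$, $A[y^{-1}]$, $A[x^{-1}]$ and $A/(x)$ bidegree by bidegree using $F_r\cap F_s=F_{\min(r,s)}$. Your identification of the kernel in (i) as $yA$, and of the quotient with $\gr_\cF(R)\otimes\kk[x]$ via $x^ry^s\mapsto x^{r-s}y^s$, makes precise a step the paper states loosely: its displayed sum over all $r,s$ should run only over $r\geq s$.
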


\begin{proof} 
For every $r \leq s-1$, we have $F_r \cap F_s=F_r$, and $F_r \cap F_{s-1} =F_r$, and so 
\[ (F_r \cap F_s)/(F_r \cap F_{s-1})=0
\,.\] 
On the other hand, for $r \geq s$, we have $F_r \cap F_s=F_s$, and $F_r \cap F_{s-1}=F_{s-1}$, and so \[ (F_r \cap F_s)/(F_r \cap F_{s-1})=F_s/F_{s-1}\,.\] 
Hence, we have an isomorphism
\[ \bigoplus_{r \geq 0} \bigoplus_{s \geq 0} (F_r \cap F_s)/(F_r \cap F_{s-1}) x^r y^s\simeq 
\bigoplus_{r \geq 0} \bigoplus_{s \geq 0} 
(F_s/F_{s-1})x^r y^s = (\gr_\cF(R)) \otimes \kk[x] \,.\]
Since $\Spec ( (\gr_\cF(R)) \otimes \kk[x]) = D \times \A^1=\cD$, this proves (i).
Moreover, (ii) follows from $\overline{\mathrm{Rees}}_\cF(R) \otimes \kk[y^\pm] \simeq \mathrm{Rees}_\cF(R) \otimes \kk[y^\pm]$, and similarly (iii) follows from  $\overline{\mathrm{Rees}}_\cF(R) \otimes \kk[x^\pm] \simeq \mathrm{Rees}_\cF(R) \otimes \kk[x^\pm]$.

The central fiber of $Y_0:=\rho^{-1}(0)$ is given by 
\[ Y_0=\Proj_s (\overline{\mathrm{Rees}}_\cF(R)/(x))\,.\]
To calculate $\overline{\mathrm{Rees}}_\cF(R)/(x)$, first note that the $s$-degree part of $\overline{\mathrm{Rees}}_\cF(R)$ is
\[ \bigoplus_{0\leq r \leq s} F_r x^r y^s \oplus \bigoplus_{r>s} F_s x^r y^s \,.\]
If $r \leq s$, then $F_{r-1} x^r y^s \subset F_r x^r y^s$ is the image of $F_{r-1} x^{r-1}y^s$ by multiplication by $x$. For $r>s$, all of $F_s x^r y^s$ is the image of 
$F_s x^{r-1}y^s$ by multiplication by $x$. 
Therefore, 
we have 
\[ \overline{\mathrm{Rees}}_\cF(R)/(x) \simeq  \bigoplus_{s \geq 0} \bigoplus_{0 \leq r \leq s} F_r/F_{r-1} y^s \simeq \mathrm{Rees}_{\cF^0}(\gr_\cF(R)) \,,\]
and so 
\[ Y_0 =\Proj \left( \mathrm{Rees}_{\cF^0}(\gr_\cF(R))\right) \,,\]
which proves (iv).
\end{proof}

\begin{definition} \label{def_canonical_degeneration}
Let $\cF$ be a projective filtration on a $\kk$-algebra $R$, and let $(Y,D)$ be the associated projective compactification of $X=\Spec(R)$.
The \emph{canonical degeneration} of $(Y,D)$ is the 
$\G_m$-equivariant flat projective morphism
\[\rho: (\cY, \cD) \longrightarrow \A^1\,,\]
where $\rho: \cY \rightarrow \A^1$ is described in \eqref{eq_rho}
and $\cD$ is the constant family $D \times \A^1$ embedded in $\cY$ as in \eqref{eq_cD}.
\end{definition}

\begin{example}
\label{Example:toric2}
In the context of toric geometry, the canonical degeneration of Definition \ref{def_canonical_degeneration} can be viewed as an example of Mumford toric degenerations of toric varieties \cite{Mumford} -- see for instance \cite[Example 3.6]{gross} for an exposition.
Indeed, following Example \ref{Example:toric1}, let $P$ be a convex lattice polytope in $M_\RR$ containing the origin in its interior, $\cF$ the corresponding filtration on $R=\kk[M]\simeq \kk[z_1^\pm, \dots, z_n^\pm]$, and $(Y,D)$ the associated projective compactification of $X=\Spec(R)=\G_m^n$, given by the toric variety with momentum polytope $P$. 
We explain below how to view the canonical degeneration $\rho: (\cY, \cD) \rightarrow \A^1$ of $(Y,D)$
as an example of Mumford toric degeneration.

\begin{figure}[htbp]
\centering
\begin{tikzpicture}[x=1.00cm,y=1.00cm,line cap=round,line join=round]
  % Key points
  \coordinate (A) at (-1,1);
  \coordinate (B) at (-1,-1);
  \coordinate (C) at (5,-1);
  \coordinate (O) at (0,0);

  % Half-lines through the three vertices
  \draw[line width=0.9pt] (O) -- ($(O)!3.0!(A)$);
  \draw[line width=0.9pt] (O) -- ($(O)!3.0!(B)$);
  \draw[line width=0.9pt] (O) -- ($(O)!1.3!(C)$);

  % Triangle
  \draw[line width=0.9pt] (A) -- (B) -- (C) -- cycle;

  % Lattice points
  \fill (A) circle (0.065);
  \fill (-1,0) circle (0.065);
  \fill (B) circle (0.065);
  \fill (O) circle (0.065);
  \fill (0,-1) circle (0.065);
  \fill (1,0) circle (0.065);
  \fill (1,-1) circle (0.065);
  \fill (2,0) circle (0.065);
  \fill (2,-1) circle (0.065);
  \fill (3,-1) circle (0.065);
  \fill (4,-1) circle (0.065);
  \fill (C) circle (0.065);

  % Coordinate labels
 
  \node[above right=1pt] at (O) {$O$};

  % Cone labels
  \node[align=center] at (2.2,2.15) {$\varphi_P = x + 3y$\\[1mm]$a_i = 2$};
  \node[align=center] at (-2.55,0.0) {$\varphi_P = -2y$\\[1mm]$a_i = 1$};
  \node[align=center] at (2.25,-2.05) {$\varphi_P = -2x$\\[1mm]$a_i = 1$};
\end{tikzpicture}
\centering
\caption{A convex polytope $P$ containing the origin in its interior, the corresponding polyhedral decomposition $\sP_P$ and the convex piecewise linear function $\varphi_P$. In this example, $(a_i)_i=(1,1,2)$ and $a=2$.}
\label{figure1}
\end{figure}

In general, a Mumford toric degeneration is determined by a convex lattice polytope $P$, an integral polyhedral decomposition $\sP$ of $P$, and a convex piecewise linear function on $P$ whose domains of linearity are the cells of $\sP$. 
Using the notation of Example \ref{Example:toric1}, we consider the piecewise linear function 
\begin{align*}
    \varphi_P: M_\RR &\longrightarrow \RR \\
    m &\longmapsto \max_{1 \leq i \leq \ell} \left( \frac{a}{a_i} \alpha_i(m) \right)\,.
\end{align*}
The polyhedral decomposition $\sP_P$ of $P$ defined by the domains of linearity of $\varphi_P$ restricted to $P$ is the conical star subdivision around $0$, whose maximal dimensional cells are cones centered at $0$ over the facets of $P$ --- see Figure \ref{figure1}. 
The total space of the Mumford toric degeneration associated to $(\frac{1}{a}P, \frac{1}{a}\sP, \varphi_P|_{\frac{1}{a}P})$ is the $\QQ$-polarized toric variety associated to the unbounded rational polytope
    \[ \widetilde{P}=\{ (m,r) \in M_\RR \oplus \RR\,|\, m \in \frac{1}{a}P, \, r \geq \varphi_P(m)\} \,,\]
    that is, 
    the Proj of 
    \[ \bigoplus_{s \geq 0} \bigoplus_{(m,r) \in (s\widetilde{P}) \cap (M \oplus \ZZ) } \kk z^{(m,r,s)} \,. \]
Denote $x=z^{(0,1,0)}$ and $y=z^{(0,0,1)}$. Then the coefficient of $x^r y^s$ has a basis consisting of $z^m$, indexed by
$m \in M$ such that $r \geq \varphi_P(m)$ and $m \in \frac{s}{a}P$, that is $s \geq \varphi_P(m)$.

On the other hand, the definition of $\cF=(F_d)_{d\geq 0}$ in \eqref{Eq:Fd_toric} can be rewritten in terms of $\varphi_P$ as 
    \[F_d = \bigoplus_{\substack{m \in M \\ \varphi_P(m) \leq d }} \kk z^m \,.\]
    Hence, we have 
    \[ \mathrm{Rees}_\cF(R) = \bigoplus_{d \geq 0} \bigoplus_{\substack{m \in M \\ \varphi_P(m) \leq d }} \kk z^m  t^d\]
and 
\[ \overline{\mathrm{Rees}}_\cF(R) = \bigoplus_{r,s \geq 0} \bigoplus_{\substack{m \in M \\ \varphi_P(m) \leq r \\ \varphi_P(m)\leq s}} \kk z^m x^r y^s \,.\]
Therefore, the canonical degeneration $\rho: \cY = \Proj_s( \overline{\mathrm{Rees}}_\cF(R)) \rightarrow \A^1$ is exactly the  
Mumford toric degeneration associated to $(\frac{1}{a}P, \frac{1}{a}\sP, \varphi_P|_{\frac{1}{a}P})$.
\end{example}

\subsubsection{Geometric description of the canonical degeneration: base change}
\label{sec_base_change}
In this section, we provide a geometric description of the canonical degeneration $\rho:(\cY, \cD) \rightarrow \A^1$ introduced in Definition \ref{def_canonical_degeneration}. To do this, we first express the $\ZZ_{\geq 0} \times \ZZ_{\geq 0}$-graded algebra $\overline{\mathrm{Rees}}_\cF(R)$ 
as a base change of the $\ZZ_{\geq 0}$-graded Rees algebra $\mathrm{Rees}_\cF(R)$. 
Recall from \eqref{eq_rees} that $\mathrm{Rees}_\cF(R)$ is naturally a $\kk[t]$-algebra,
and from \eqref{eq_overline_rees} that $\overline{\mathrm{Rees}}_\cF(R)$ is naturally a $\kk[x,y]$-algebra. Viewing $\kk[x,y]$ as a $\kk[t]$-algebra via the ring homomorphism
\begin{align}
\label{eq:t=xy}
    \kk[t] & \longrightarrow \kk[x,y] \\
    \nonumber
    t &\longmapsto xy \,,
\end{align}
we obtain the following lemma.

\begin{lemma}
\label{lem:isomprhismAtilde}
There is a natural isomorphism of $\kk[x,y]$-algebras
\[ \overline{\mathrm{Rees}}_\cF(R) \simeq \mathrm{Rees}_\cF(R) \otimes_{\kk[t]} \kk[x,y] \,.\]
\end{lemma}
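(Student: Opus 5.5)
We will construct the isomorphism explicitly on graded pieces and check it is bijective by working componentwise in the free $\kk[x,y]$-module structure. Since the filtration is non-negative, we can write
\[
\mathrm{Rees}_\cF(R)=\bigoplus_{d\ge0}F_d t^d .
\]
The tensor product $\mathrm{Rees}_\cF(R)\otimes_{\kk[t]}\kk[x,y]$ is then generated as a $\kk$-vector space by elements $f t^d\otimes x^a y^b$ with $f\in F_d$. Define a map to $R[x,y]$ by
\[
f t^d\otimes x^ay^b\longmapsto f\,x^{d+a}y^{d+b}.
\]
This is well defined because $t\mapsto xy$, so $ft^{d+1}\otimes 1$ and $ft^d\otimes xy$ have the same image. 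It is a morphism of $\kk[x,y]$-algebras. Its image lies in $\overline{\mathrm{Rees}}_\cF(R)$: since $f\in F_d$ and $F_d$ is contained in both $F_{d+a}$ and $F_{d+b}$, we get $f\in F_{d+a}\cap F_{d+b}$.

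\emph{Surjectivity.} Take $f x^r y^s$ with $f\in F_r\cap F_s=F_{\min(r,s)}$, and set $d=\min(r,s)$. Then $f x^ry^s$ is the image of $ft^d\otimes x^{r-d}y^{s-d}$, where one of the exponents $r-d$, $s-d$ is zero.

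\emph{Injectivity.} This is the main point. I would use that $\kk[x,y]$ is free over $\kk[t]$ (with $t=xy$), with $\kk[t]$-basis the monomials $x^a$ ($a\ge0$) and $y^b$ ($b>0$), that is, the monomials $x^ay^b$ with $\min(a,b)=0$. Hence
\[
\mathrm{Rees}_\cF(R)\otimes_{\kk[t]}\kk[x,y]=\bigoplus_{\min(a,b)=0}\mathrm{Rees}_\cF(R)\,x^ay^b
=\bigoplus_{\min(a,b)=0}\,\bigoplus_{d\ge0}F_d\,t^d\, x^ay^b .
\]
The summand indexed by $(d,a,b)$ maps into the bidegree $(d+a,d+b)$ part of $R[x,y]$. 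The assignment $(d,a,b)\mapsto(d+a,d+b)$ is a bijection from $\{(d,a,b):d\ge0,\ \min(a,b)=0\}$ onto $\ZZ_{\ge0}^2$, with inverse $d=\min(r,s)$, $a=r-d$, $b=s-d$. So distinct summands land in distinct bidegrees. On each summand the map is $F_d\to F_{\min(r,s)}$, which is the identity on $F_d$, hence injective.

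Therefore the map is an isomorphism, bidegree by bidegree, onto $\bigoplus_{r,s}F_{\min(r,s)}x^ry^s=\overline{\mathrm{Rees}}_\cF(R)$. The only step needing care is the freeness and basis description of $\kk[x,y]$ over $\kk[t]$, which is elementary. Naturality in $\cF$ is clear from the formula.
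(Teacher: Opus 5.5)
Your proof is correct and follows essentially the same route as the paper: both use $t\otimes 1=1\otimes xy$ to send $f t^d\otimes x^a y^b$ to $f\,x^{a+d}y^{b+d}$, and both identify the bidegree $(r,s)$ piece with $F_{\min(r,s)}=F_r\cap F_s$. Your use of the $\kk[t]$-basis $\{x^ay^b\}_{\min(a,b)=0}$ of $\kk[x,y]$ makes the injectivity step explicit, which the paper's argument leaves implicit.
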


\begin{proof}
By \eqref{eq:t=xy}, the relation
$t\otimes 1 = 1\otimes xy$
holds in the tensor product
$\mathrm{Rees}_{\mathcal F}(R)\otimes_{\kk[t]}\kk[x,y]$.
Consequently, for every \(f\in F_d\) and every \(a,b\ge 0\),
\[
f t^d\otimes x^a y^b
 = f\otimes (xy)^d x^a y^b
 = f\otimes x^{a+d}y^{b+d}.
\]
It follows that
\[
\mathrm{Rees}_{\mathcal F}(R)\otimes_{\kk[t]}\kk[x,y]
\simeq
\bigoplus_{r,s\ge0} V_{r,s}\otimes x^r y^s\,,
\]
where
\[
V_{r,s}
\coloneqq
\bigcup_{0\le d\le \min\{r,s\}} F_d\,.
\]
Since \((F_d)_{d\ge0}\) is an increasing filtration,
\[
V_{r,s}
=
F_{\min\{r,s\}}
=
\begin{cases}
F_r, & \text{if } r\le s\,,\\
F_s, & \text{if } s\le r\,.
\end{cases}
\]
Equivalently,
\[
V_{r,s}=F_r\cap F_s\,,
\]
and so the desired conclusion follows from the definition of
\(\overline{\mathrm{Rees}}_{\mathcal F}(R)\) in \eqref{eq_overline_rees}.
\end{proof}

The projective compactification $Y=\Proj(\mathrm{Rees}_\cF(R))$ can be described geometrically as the 
quotient by $\G_{m,t}=\Spec(\kk[t^\pm])$ of the complement of $t=0$ in the total space of the $\G_{m,t}$-equivariant family
\[ \pi: \cX = \Spec(\mathrm{Rees}_\cF(R)) \longrightarrow \A^1_t =\Spec(\kk[t])\,.\]
To describe $\rho: \cY \rightarrow \A^1$, we first consider the base change 
of $\pi: \cX \rightarrow \A^1_t$ along the map $\A^2_{x,y} \rightarrow \A^1_t$ given by $t=xy$. 
By Lemma \ref{lem:isomprhismAtilde}, this base change is the 
 $\mathbb{G}_{m,x} \times \mathbb{G}_{m,y} = \Spec(\kk[x^{\pm}, y^{\pm}])$-equivariant morphism 
\[ \Spec(\overline{\mathrm{Rees}}_\cF(R)) \longrightarrow \mathbb{A}^2_{x,y} = \Spec(\kk[x,y])  \,. \]
Taking the quotient of the complement of $y = 0$ in $\Spec(\overline{\mathrm{Rees}}_\cF(R))$ 
by $\mathbb{G}_{m,y}$, we obtain the canonical degeneration of $(Y,D)$ 
given by the $\mathbb{G}_{m,x}$-equivariant morphism
\[ \rho: \cY = \Proj_s (\overline{\mathrm{Rees}}_\cF(R)) \longrightarrow \mathbb{A}^1_x = \Spec(\kk [x]) \,.  \]

\begin{remark}
The construction of degenerations via the base change $t=xy$ 
has appeared previously in several places in the literature.
For instance, it was used by the first author to construct toric degenerations of the Tate curve in \cite[Theorem 3.2]{Arguzcorals}. In a non-affine context, it was also used in non-abelian Hodge theory to construct a fiberwise compactification of the Hodge degeneration from the de Rham moduli space to the Dolbeault moduli space \cite{hausel_compactification, simpson_hodge} -- see also \cite[Lemma 6.1]{HT2003}, \cite[Theorem 7.2.1]{HLRV2011}, and \cite[Theorem 3.2]{FM2022}. 
By contrast, in this paper, we apply this construction to degenerations of character varieties, that is, of Betti moduli spaces in the terminology of non-abelian Hodge theory.
\end{remark}

\subsubsection{Geometric description of the canonical degeneration: normal cone} \label{sec_normal_cone}

Finally, we explain how the canonical degeneration
$\rho \colon (\mathcal Y,\mathcal D)\longrightarrow \A^1$
of \((Y,D)\) can be interpreted in terms of the degeneration to the weighted normal cone \cite[\S 5.1]{fultonbook}. Since this interpretation will not be used elsewhere in the paper, we restrict ourselves to a brief discussion.

Let \(\mathcal F=(F_d)_{d\ge0}\) be a projective filtration of a \(\kk\)-algebra \(R\), let \((Y,D)\) be the associated projective compactification of \(X=\Spec(R)\), and assume that \(X\) is normal and that the degree-like function \(\delta\) defining \(\mathcal F\) is a subdegree. By Proposition~\ref{prop_proj_comp}, if \(D_1,\ldots,D_\ell\) denote the irreducible components of \(D\), then the \(\QQ\)-divisor
\[
E=\sum_{i=1}^{\ell}\frac{1}{d_i}D_i,
\]
where the integers \(d_i\) are given by \eqref{eq:di}, is an ample \(\QQ\)-Cartier divisor satisfying
\[
F_d=H^0\bigl(Y,\mathcal O_Y(dE)\bigr)
\]
for every \(d\ge0\).

The degeneration of \(Y\) to the weighted normal cone of \(E\) is given by
\[
\widetilde{\rho}\colon
\widetilde{\cY}
\longrightarrow
\A^1,
\]
where $\widetilde{\cY}:=\mathrm{Bl}_{E\times\{0\}}(Y\times\A^1)$ denotes the weighted blow-up of \(E\times\{0\}\subset Y\times\A^1\), and the map $\widetilde{\rho}$ is the composition of the blow-up map with the projection $Y \times \A^1 \rightarrow \A^1$. 
Weighted blow-ups may be realized as ordinary blow-ups of suitable Deligne--Mumford stacks, obtained by introducing a root stack of order \(d_i\) along each divisor \(D_i\); see \cite{quek2021weighted} and \cite[\S~3]{abramovich2024functorial} for details. The central fiber of \(\widetilde{\rho}\) consists of two irreducible components: the weighted blow-up \(\mathrm{Bl}_E(Y)\) of \(Y\) along \(E\), and the projectivization of the weighted normal bundle of \(E\) in \(Y\).

Then, the canonical degeneration
\[
\rho\colon \mathcal Y\longrightarrow\A^1
\]
is obtained from \(\widetilde{\rho}\) by contracting the component
\(\mathrm{Bl}_E(Y)\subset\widetilde{\rho}^{-1}(0)\) to a point -- see Figure \ref{figure2}. 
This contraction is induced by the relatively nef \(\QQ\)-Cartier divisor \(E\times\A^1\) on $\widetilde{\cY}$, which descends to a relatively ample \(\QQ\)-Cartier divisor on the resulting family
\(\mathcal Y\to\A^1\).

\begin{figure}[htbp]
\center{\scalebox{.6}{\input{fig2.pspdftex}}}
\caption{The degeneration to the weighted normal cone $\widetilde{\rho}$ on the left, and the canonical degeneration $\rho$ on the right.}
\label{figure2}
\end{figure}

\begin{example} \label{ex_cusp}
Let $Y$ be a smooth projective curve of genus one, $D=p$ the divisor consisting of a single point $p\in Y$, and $X:=Y\setminus D$ the complement affine curve. The order of poles along $p$ defines a projective filtration $\mathcal F$ on the algebra of regular functions 
$R:=H^0(X,\mathcal O_X)$,
whose associated projective compactification is the pair $(Y,D)$. By the Riemann--Roch theorem and Serre duality,
we have
\[ H^0(Y,\mathcal O_Y)=H^0(Y,\mathcal O_Y(p))=\kk,\text{     and    } 
H^0(Y,\mathcal O_Y((k+1)p))/H^0(Y,\mathcal O_Y(kp))\cong \kk\text{   for every   }k\ge1\,.\] 
Consequently, the associated graded algebra is
\[
\gr_{\mathcal F}(R)
=\kk\oplus\bigoplus_{k\ge2}\kk\,t^k
\subset \kk[t]\,.
\]
Hence, $X_0=\Spec(\gr_{\mathcal F}(R))$
is the affine cusp
\[ \Spec\bigl(\kk[x,y]/(y^2-x^3)\bigr),
\]
and the corresponding canonical degeneration
$\rho:(\mathcal Y,\mathcal D)\longrightarrow\A^1$
has central fiber $
Y_0=\Proj\!\bigl(\kk[x,y,s]/(y^2s-x^3)\bigr)$, 
the cuspidal cubic in $\PP^2$.
If
$y^2s=x^3+axs^2+bs^3$
is a Weierstrass equation for $Y$ as a cubic in $\PP^2$, then the total space of the canonical degeneration is
\[
\mathcal Y=
\Proj\!\left(
\kk[x,y,s,\lambda]/
(y^2s-x^3-a\lambda^2 xs^2-b\lambda^3s^3)
\right),
\]
where $\lambda$ is the coordinate on $\A^1$.

This degeneration also admits a simple description in terms of the degeneration to the normal cone. Blowing up $D\times\{0\}\subset Y\times\A^1$ yields a family $\widetilde{\rho}: \widetilde{\cY} \rightarrow \A^1$ whose central fiber is the union of two irreducible components: a copy of $Y$ and a copy of $\PP^1$, meeting transversely at a single point. The canonical degeneration is obtained by contracting the component isomorphic to $Y$ to a point. The remaining component $\PP^1$ maps isomorphically to $Y_0$ away from the singular point, giving the normalization map
$\PP^1\longrightarrow Y_0$
of the cuspidal cubic -- see Figure \ref{fig:fig3}.

This example does not satisfy the hypotheses of Theorem~\ref{thm_main_C}. Indeed, the graded ring $\gr_{\mathcal F}(R)$ is not the algebra of a cone complex; rather, it is the monoid algebra of a non-saturated monoid. This is consistent with the fact that $X$ does not admit a log Calabi--Yau compactification. Indeed, among smooth connected affine curves, only $\G_m$ admits a log Calabi--Yau compactification.

\begin{figure}[htbp]
\center{\scalebox{.6}{\input{fig3.pspdftex}}}
\caption{The degeneration to the normal cone $\widetilde{\rho}$ and the canonical degeneration $\rho$ in Example \ref{ex_cusp} .}
\label{fig:fig3}
\end{figure}

\end{example}

\subsection{Vertices and compactified vertices}
\label{section_vertices}
In this section, we study properties of the central fibers of the canonical degenerations which will be used in the proof of Theorem \ref{thm_main_C} in \S \ref{section_end_proof}.
Recall from \S\ref{sec_algebras_cone} that every cone complex $\Omega$ determines an algebra $\kk[\Omega]$. We adopt the following terminology from \cite{GHK1}.

\begin{definition}
    The \emph{vertex} associated to the cone complex $\Omega$ is the affine scheme \[ X_0:=\Spec(\kk[\Omega])\,.\] 
The vertex $X_0$ is the union of affine toric varieties $\operatorname{Spec}(\kk[\omega])$, glued along their toric strata, where $\kk[\omega]$ is the monoid algebra of the integral points of the cones $\omega$ of $\Omega$.
\end{definition}

As reviewed in \S\ref{sec_algebras_cone}, a non-negative integral piecewise linear function $h: |\Omega| \rightarrow \RR_{\geq 0}$ defines a $\ZZ_{\geq 0}$-grading on $\kk[\Omega]$. 

\begin{definition} \label{def_comp_vertex}
    The \emph{compactified vertex} associated to the cone complex $\Omega$ and the non-negative integral piecewise linear function $h: |\Omega| \rightarrow \RR_{\geq 0}$ is the 
    projective compactification $(Y_0, D_0)$ of the vertex $X_0 = \Spec(\kk[\Omega])$ associated as in Example \ref{Example:projcone} to the $\ZZ_{\geq 0}$-grading defined by $h$ on $\kk[\Omega]$, that is, 
\[ Y_0 = \Proj \left( \bigoplus_{d \geq 0}  \bigoplus_{\substack{m \in \Omega(\ZZ) \\ 
0 \leq h(m) \leq d}} \kk z^m t^d   \right) \,,\]
    and 
    \[ D_0 = \Proj(\kk[\Omega]) \,.\]
\end{definition}

The following result shows that compactified vertices naturally occur as central fibers of canonical degenerations.

\begin{lemma}  \label{lem_central_vertex}
Let $\cF=(F_d)_{d\geq 0}$ be a projective filtration on a $\kk$-algebra $R$, $(Y,D)$
the associated projective compactification of $X=\Spec(R)$, and $\rho:(\cY, \cD) \rightarrow \A^1$ 
the canonical degeneration of $(Y,D)$. Assume that there exist a cone complex $\Omega$ and a 
non-negative integral piecewise linear function $h: |\Omega| \rightarrow \RR_{\geq 0}$ such that the corresponding algebra
$\kk[\Omega]$, endowed with the grading induced by $h$, is isomorphic to $\mathrm{gr}_\cF(R)$ as a $\ZZ_{\geq 0}$-graded algebra. 
Then, the central fiber $(Y_0,D_0)=\rho^{-1}(0)$ is the compactified vertex associated to 
$\Omega$ and $h$.
\end{lemma}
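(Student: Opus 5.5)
The plan is to combine Proposition \ref{prop_degeneration}(iv) with the given graded isomorphism, and then to match the result against Definition \ref{def_comp_vertex}. The central fiber of the canonical degeneration is computed in Proposition \ref{prop_degeneration}(iv) as
\[ Y_0=\Proj\Bigl(\bigoplus_{d\geq 0}\bigoplus_{0\leq r\leq d} (F_r/F_{r-1})\,y^d\Bigr), \]
that is, as the projective compactification of $X_0=\Spec(\gr_\cF(R))$ associated with the filtration $\cF^0$ of Example \ref{Example:projcone}. This filtration is the one induced by the $\ZZ_{\geq 0}$-grading of $\gr_\cF(R)$. So everything is expressed purely in terms of the graded algebra $\gr_\cF(R)$.

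The key steps are as follows.
\begin{itemize}
\item[(1)] Fix the given isomorphism $\phi:\kk[\Omega]\to\gr_\cF(R)$ of $\ZZ_{\geq 0}$-graded algebras. By assumption, $\phi$ sends $\bigoplus_{h(m)=r}\kk z^m$ onto $F_r/F_{r-1}$.
\item[(2)] Since $\phi$ preserves the grading, it also preserves the filtrations $F^0_d=\bigoplus_{r\leq d}$ by degree. It therefore induces an isomorphism of the bigraded Rees algebras
\[ \bigoplus_{d\geq 0}\bigoplus_{m\in\Omega(\ZZ),\,h(m)\leq d}\kk z^m t^d\ \simeq\ \bigoplus_{d\geq 0}\bigoplus_{r\leq d}(F_r/F_{r-1})\,y^d, \]
sending $t\mapsto y$. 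This isomorphism is compatible with the $d$-grading, so it induces an isomorphism on $\Proj$. Hence $Y_0$ is the compactified vertex.
\item[(3)] For the boundary: by Proposition \ref{prop_degeneration}(i), the restriction $\cD|_{0}$ is $D\times\{0\}=\Proj(\gr_\cF(R))$. It is embedded via the quotient by the ideal generated by $y$ (the positive $t$-part), which under the identification is exactly the closed embedding $\Proj(\kk[\Omega])=D_0\hookrightarrow Y_0$ of Example \ref{Example:projcone}. The quotient $\mathrm{Rees}_{\cF^0}/(t)=\gr_{\cF^0}$ recovers the original graded algebra, and $\phi$ induces an isomorphism $D\simeq\Proj(\kk[\Omega])$.
\end{itemize}

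The main point requiring care is the compatibility in step (3). One must check that the embedding $\cD\hookrightarrow\cY$ of Proposition \ref{prop_degeneration}(i), restricted to $x=0$, agrees with the embedding of $\Proj(\gr_{\cF^0})$ into $\Proj(\mathrm{Rees}_{\cF^0})$. I would verify this directly on the bigraded pieces: mod $x$, the quotient map $(F_r\cap F_s)\to(F_r\cap F_s)/(F_r\cap F_{s-1})$ becomes, for $r=s$, the projection $F_s/F_{s-1}\,y^s\to F_s/F_{s-1}$. For $r<s$ it kills the piece, which matches reduction modulo $t$ in $\mathrm{Rees}_{\cF^0}$. Everything else is formal bookkeeping.
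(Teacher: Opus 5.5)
Your proposal is correct and follows the same route as the paper: apply Proposition~\ref{prop_degeneration}(iv), then use the graded isomorphism to identify $F_r/F_{r-1}$ with $\bigoplus_{h(m)=r}\kk z^m$, which matches Definition~\ref{def_comp_vertex}. Your step (3) is also correct and goes slightly beyond the paper, which leaves it implicit: you check that the kernel of the quotient map in Proposition~\ref{prop_degeneration}(i) is the ideal generated by $y$, so that reducing mod $x$ recovers the embedding $\Proj(\gr_{\cF^0})\hookrightarrow\Proj(\mathrm{Rees}_{\cF^0})$.
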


\begin{proof}  
By Proposition \ref{prop_degeneration}(iv), the central fiber $(Y_0, D_0)=\rho^{-1}(0)$ is given by 
\[  Y_0 = \Proj \left( \bigoplus_{d \geq 0} \bigoplus_{0 \leq r \leq d} F_r/F_{r-1} \right) \text{    and    }D_0=\Proj(\gr_\cF(R))=\Proj \left( \bigoplus_{d \geq 0} F_d/F_{d-1} \right) \,.\]
The assumption that $\gr_\cF(R)$ is isomorphic to $\kk[\Omega]$ with the grading defined by $h$ induces identifications 
\[ F_r/F_{r-1} \simeq \bigoplus_{\substack{m \in \Omega(\ZZ)\\ h(m)=r}} \kk z^m \,,\]
and so $(Y_0, D_0)$ is indeed the compactified vertex associated to $\Omega$ and $h$ in Definition 
\ref{def_comp_vertex}. 
\end{proof}

In the remainder of this section, we establish properties of compactified vertices. 
To do this, we use that compactified vertices are examples of stable toric varieties associated to polyhedral complexes as in \cite{Valeryannals}, as explained below.

\begin{definition}
    A \emph{compact polyhedral complex} $\Delta$ consists of a topological space $|\Delta|$, together with a finite collection of compact subspaces of $|\Delta|$, called the \emph{cells} of $\Delta$, and, for each cell $\delta$, a finitely generated group $\mathrm{Aff}_\delta$ of continuous real-valued functions on $\delta$, called the \emph{integral affine functions} on $\delta$.
    We require that $\mathrm{Aff}_\delta$ contains the group $\ZZ$ of integral constant functions. 
    Let $c: \Hom(\mathrm{Aff}_\delta, \ZZ) \rightarrow \Hom(\ZZ,\ZZ) \simeq \ZZ$ be the dual map
    to the inclusion $\ZZ \hookrightarrow \mathrm{Aff}_\delta$. Then, the following hold:
    \begin{itemize}
        \item[(i)] The evaluation map
    \[
    \phi_\delta:\delta \longrightarrow \A_\delta \otimes \RR,
    \qquad
    x\longmapsto \bigl(f\mapsto f(x)\bigr),
    \]
    is a homeomorphism from $\delta$ onto a compact lattice polytope in the real affine space $\A_\delta \otimes \RR$ endowed with the affine lattice 
    \[ \A_\delta:=\{ f \in \Hom(\mathrm{Aff}_\delta, \ZZ)\,|\, c(f)=1\}\,. \]
    \item[(ii)] For every face $\tau$ of $\phi_\delta(\delta)$, the preimage
$\rho:=\phi_\delta^{-1}(\tau)$
    is a cell of $\Delta$, and
\[ \mathrm{Aff}_\rho=\{\,f|_\rho \mid f\in \mathrm{Aff}_\delta\,\}\,.\]
    \item[(iii)] The underlying topological space $|\Delta|$ is the disjoint union of the relative interiors of the cells of $\Delta$.
    \end{itemize}
\end{definition}

\begin{definition} \label{def_cone_delta}
Let $\Delta$ be a compact polyhedral complex. The \emph{cone over $\Delta$} is the cone complex
$\mathrm{Cone}(\Delta)$
obtained by replacing each cell $\delta$ of $\Delta$ with the cone over the lattice polytope
$\delta\times\{1\}\subset \A_\delta\times\RR$. The projections $\A_\delta \times \RR \rightarrow \RR$ define a non-negative piecewise linear function $h_\Delta: |\mathrm{Cone}(\Delta)| \rightarrow \RR_{\geq 0}$ such that $h_{\Delta}^{-1}(1) \simeq |\Delta|$.
\end{definition}

\begin{definition} \label{def_stable_toric}
The \emph{stable toric variety} associated to the compact polyhedral complex $\Delta$ is the projective scheme 
\[ P[\Delta] := \Proj\left( \kk[\mathrm{Cone}(\Delta)]\right) \,,\]
where $\kk[\mathrm{Cone}(\Delta)]$ is the algebra of the cone complex $\mathrm{Cone}(\Delta)$, with $\ZZ_{\geq 0}$-grading defined by the non-negative piecewise linear function $h_\Delta$. 
The scheme $P[\Delta]$ is the union of projective toric varieties with momentum polytopes 
given by the cells of $\Delta$, glued along their toric strata. 
\end{definition}

\begin{lemma} \label{lem_1}
The compactified vertex $Y_0$ associated with the cone complex $\Omega$ and the non-negative integral
piecewise linear function $h: |\Omega| \rightarrow \RR_{\geq 0}$ coincides with the stable toric variety $P[\Delta(\Omega,h)]$ associated with the polyhedral complex $\Delta(\Omega,h)$, whose cells are
\[
\delta_\sigma=\{\,x\in\sigma\mid h(x)\le 1\,\},
\]
where $\sigma$ ranges over the cones of $\Omega$:
\[ Y_0 = P[\Delta(\Omega, h)] \,.\]
\end{lemma}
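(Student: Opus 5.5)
The plan is to identify the two graded algebras directly. By Definition \ref{def_comp_vertex}, $Y_0=\Proj(A)$ with
\[ A:=\bigoplus_{d\geq 0}\bigoplus_{\substack{m\in\Omega(\ZZ)\\ h(m)\leq d}}\kk\, z^m t^d \,, \]
and by Definition \ref{def_stable_toric}, $P[\Delta(\Omega,h)]=\Proj(\kk[\mathrm{Cone}(\Delta(\Omega,h))])$, graded by $h_{\Delta(\Omega,h)}$. It therefore suffices to build an isomorphism of $\ZZ_{\geq 0}$-graded $\kk$-algebras between $A$ and $\kk[\mathrm{Cone}(\Delta(\Omega,h))]$.

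\textbf{Step 1: $\Delta(\Omega,h)$ is a compact polyhedral complex.} For each cone $\sigma$ of $\Omega$, set $\mathrm{Aff}_{\delta_\sigma}:=\{f|_{\delta_\sigma}+c \mid f\in N_\sigma,\ c\in\ZZ\}$. Then $\delta_\sigma$ is the truncation of the rational cone $\phi_\sigma(\sigma)$ by the half-space $h|_\sigma\leq 1$, where $h|_\sigma\in N_\sigma$ is integral and non-negative.

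There is a subtlety here. $\delta_\sigma$ is compact only if $h>0$ on $\sigma\setminus\{0\}$. Otherwise $h$ vanishes on some ray, the set $\delta_\sigma$ is unbounded, and $A$ would have infinite-dimensional graded pieces. I will assume $h$ is positive away from the apex, as in all the intended applications, where $h^{-1}(1)$ is a sphere.

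The vertices of $\delta_\sigma$ are $0$ and the points where the rays meet $h=1$. These need not be lattice points, so $\delta_\sigma$ is in general only a rational polytope. I expect this integrality issue to be the main obstacle. My first attempt is to note that the paper's notion of stable toric variety only uses the graded algebra of integral points of the cone. I would therefore define $\mathrm{Cone}(\delta_\sigma)$ as a rational cone with the lattice $M_\sigma\oplus\ZZ$, and check that Definitions \ref{def_cone_delta}--\ref{def_stable_toric} make sense verbatim for rational polytopes. The face condition (ii) holds because the faces of $\delta_\sigma$ are either $\delta_\tau$ for faces $\tau$ of $\sigma$, or $\sigma'\cap\{h=1\}$. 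Condition (iii) is inherited from $\Omega$.

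\textbf{Step 2: identify the cones.} $\mathrm{Cone}(\delta_\sigma)\subset (M_\sigma\otimes\RR)\times\RR$ is the cone
\[ \{(x,d)\ \mid\ x\in\sigma,\ h(x)\leq d\} , \]
and $h_\Delta(x,d)=d$. Its integral points are the pairs $(m,d)$ with $m\in\sigma\cap M_\sigma$, $d\in\ZZ$ and $h(m)\leq d$.

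Send $z^{(m,d)}\mapsto z^m t^d$. This is a bijection between the integral points of $\mathrm{Cone}(\Delta(\Omega,h))$ and the monomial basis of $A$, and it preserves degree. The gluing of the cones $\mathrm{Cone}(\delta_\sigma)$ along faces matches the gluing of the cones $\sigma$, because faces of $\mathrm{Cone}(\delta_\sigma)$ are of the form $\mathrm{Cone}(\delta_\tau)$, or the cones over $\sigma'\cap\{h=1\}$ for faces $\sigma'$ of $\sigma$.

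\textbf{Step 3: check the products agree.} In $A$, the product $z^m t^d\cdot z^{m'}t^{d'}$ is computed in $\kk[\Omega]$, with $t$-degrees adding. So it equals $z^{m+m'}t^{d+d'}$ if $m,m'$ lie in a common cone $\sigma$, and $0$ otherwise. In $\kk[\mathrm{Cone}(\Delta)]$, the points $(m,d)$ and $(m',d')$ lie in a common cone $\mathrm{Cone}(\delta_\sigma)$ exactly when $m,m'$ lie in a common $\sigma$, and then their sum is $(m+m',d+d')$. This gives the graded algebra isomorphism, and taking $\Proj$ yields $Y_0=P[\Delta(\Omega,h)]$.
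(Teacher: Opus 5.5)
Your proposal is correct and follows essentially the same route as the paper: it identifies the degree-$d$ integral points of $\mathrm{Cone}(\Delta(\Omega,h))$ with $\{m\in\Omega(\ZZ)\mid h(m)\le d\}\times\{d\}$ and matches the two graded algebras via $z^{(m,d)}\mapsto z^m t^d$. Your Step~1 caveats are genuine points the paper passes over silently, and your fix of allowing rational cells with lattice $M_\sigma\oplus\ZZ$ is harmless because only the graded algebra of integral points enters. These caveats are: that $h$ must be positive away from the apex, which is implicit in the requirement $R^0_0=\kk$ in the definition of the compactified vertex; that the cells $\delta_\sigma$ are in general only rational polytopes; and that the faces $\sigma\cap h^{-1}(1)$ must also be included as cells.
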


\begin{proof}
By Definition \ref{def_stable_toric}, we have $P[\Delta(\Omega,h)] = \Proj(\kk[\mathrm{Cone}(\Delta(\Omega,h))])$. By Definition \ref{def_algebra_cone}, we have 
\[\kk[\mathrm{Cone}(\Delta(\Omega,h))] = \bigoplus_{p \in \mathrm{Cone}(\Delta(\Omega,h))(\ZZ)} \kk z^p \,. \]
Using the description of $\mathrm{Cone}(\Delta(\Omega,h))$ in Definition 
\ref{def_cone_delta}, we obtain
\[\kk[\mathrm{Cone}(\Delta(\Omega,h))] = \bigoplus_{d \geq 0} \bigoplus_{\substack{
p \in h_{\Delta(\Omega,h)}^{-1}(d) \\ \cap  \mathrm{Cone}(\Delta(\Omega,h))(\ZZ)}} \kk z^p \,.\]
By the definition of $\Delta(\Omega,h)$, we have 
\[h_{\Delta(\Omega,h)}^{-1}(d) \cap  \mathrm{Cone}(\Delta(\Omega,h))(\ZZ)= \{ m \in \Omega(\ZZ)\,|\, h(m) \leq d \} \times \{d\} \,. \]
Hence, 
\[\kk[\mathrm{Cone}(\Delta(\Omega,h))] =  \bigoplus_{d \geq 0}  \bigoplus_{\substack{m \in \Omega(\ZZ) \\ 
0 \leq h(m) \leq d}} \kk z^m t^d \,,\]
and so the identification 
$Y_0 \simeq P[\Delta(\Omega, h)]$
follows from the description of the compactified vertex $Y_0$
in Definition \ref{def_comp_vertex}.
\end{proof}

The following result generalizes \cite[Lemma 17.1]{keel2024log}, which is formulated for simplicial complexes, to the more general context of polyhedral complexes. We refer to \cite{kollar2013singularities, kollar2023families-of-varieties} for the definition of semi log canonical pairs, which extend the notion of log canonical pairs to varieties that are not necessarily normal.

\begin{lemma} \label{lem_2}
Let $P[\Delta]$ be the stable toric variety associated to a compact polyhedral complex $\Delta$.
Assume that:
\begin{itemize}
    \item[(i)] The intersection of two cells of $\Delta$ is a single cell of $\Delta$.
    \item[(ii)] All the maximal cells of $\Delta$ have the same dimension $n$.
    \item[(iii)] There exists an orientation of each maximal cell such that every $(n-1)$-dimensional cell of $\Delta$ is either the intersection of exactly two maximal cells with opposite induced orientations or the face of a single maximal cell.
    \item[(iv)] $\Delta$ is locally Cohen--Macaulay of pure dimension $n$, that is, $\widetilde{H}_i(|\Delta|, |\Delta| \setminus \{u\}, \QQ)=0$ for every point $u \in |\Delta|$ and every $0 \leq i <n$.
    \item[(v)] $H^1(|\Delta|, \QQ)=0$. 
\end{itemize}
Then the following hold:
\begin{itemize}
 \item[(i)] The stable toric variety $P[\Delta]$ is reduced, Cohen--Macaulay of pure dimension $n$, normal crossing in codimension one, and satisfies $H^1(P[\Delta], \cO_{P[\Delta]})=0$. Moreover, if $\partial \Delta$ denotes the compact polyhedral complex consisting of the $(n-1)$-dimensional cells that are faces of exactly one maximal cell, the corresponding stable toric variety $P[\partial \Delta]$ is a reduced subscheme of pure codimension one in $P[\Delta]$.
    \item[(ii)] The pair $(P[\Delta], P[\partial \Delta])$ is semi log canonical.
    \item[(iii)] The sheaf $\omega_{P[\Delta]}(P[\partial \Delta])$ is isomorphic to $\cO_{P[\Delta]}$.
\end{itemize}
\end{lemma}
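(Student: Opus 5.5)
Our plan is to follow the strategy of \cite[Lemma 17.1]{keel2024log}, replacing the Stanley--Reisner ring by the algebra $\kk[\mathrm{Cone}(\Delta)]$ and simplices by lattice polytopes. Write $A=\kk[\mathrm{Cone}(\Delta)]$, so that $P[\Delta]=\Proj(A)$. By hypothesis (i), $A$ is the inverse limit of the monoid algebras $\kk[C(\delta)]$ of the cones $C(\delta)$ over the cells $\delta$, taken over the poset of cells. Equivalently, $P[\Delta]$ is obtained by gluing the normal projective toric varieties $P_\delta$ along their toric strata according to the face poset.

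\textbf{Step 1: reducedness and the Cohen--Macaulay property.} Reducedness is immediate, since $A$ embeds into $\prod_\delta \kk[C(\delta)]$ and each monoid algebra is a domain. For Cohen--Macaulayness we use the multigrading. The local cohomology of $A$ at the irrelevant ideal, and more generally at each torus-fixed point of the affine cone, splits over lattice points $p$ of $|\mathrm{Cone}(\Delta)|$. This is the Hochster/Ishida-type computation that generalizes Reisner's criterion from Stanley--Reisner rings to cone complexes with normal cones. Concretely, we compute $H^i_{\mathfrak m}(A)_p$ with the Ishida complex, whose terms are indexed by cells: each term $\kk[C(\delta)]$ localized at its own vertex is itself Cohen--Macaulay by Hochster's theorem. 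The result identifies the graded piece with a relative cohomology group
\[
H^{i-1}\bigl(\mathrm{link}, \mathrm{link}\setminus \mathrm{star}(p);\kk\bigr),
\]
equivalently with $\widetilde{H}_{*}(|\Delta|,|\Delta|\setminus\{u\})$ for a suitable point $u$ determined by $p$. Hypothesis (iv) kills these groups in degrees below $n+1$, so $A$ is Cohen--Macaulay of dimension $n+1$ and $P[\Delta]$ is Cohen--Macaulay of pure dimension $n$, using the purity hypothesis (ii). For the degree-zero piece we get $H^1(P[\Delta],\cO)=H^2_{\mathfrak m}(A)_0\simeq H^1(|\Delta|;\kk)$, which vanishes by (v). Normal crossings in codimension one follows from (iii): each codimension-one torus orbit lies in at most two components, and these meet transversally there, as two toric varieties glued along a common toric divisor. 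Finally, $P[\partial\Delta]$ is a union of toric divisors of the components, so it is reduced of pure codimension one.

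\textbf{Step 2: semi log canonicity.} We use Step 1, which gives $S_2$ and normal crossings in codimension one, together with the standard criterion: $(P[\Delta],P[\partial\Delta])$ is slc if and only if the normalization pair is lc. The normalization is $\bigsqcup_\delta (P_\delta,\partial P_\delta)$ over maximal cells. Its conductor plus the preimage of $P[\partial\Delta]$ is exactly the full toric boundary, and each toric pair with its full boundary is lc. We must also check that $K+\Delta$ is $\QQ$-Cartier; this comes out of Step 3, since there we show that $\omega(P[\partial\Delta])$ is trivial.

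\textbf{Step 3: the dualizing sheaf.} Since $A$ is Cohen--Macaulay, $\omega_A=H^{n+1}_{\mathfrak m}(A)^\vee$. Via the same Ishida complex, $\omega_A$ is the kernel of the restriction map
\[
\bigoplus_{\delta \text{ max}}\omega_{\kk[C(\delta)]}\longrightarrow \bigoplus_{\tau\ (n-1)\text{-dim}}\omega_{\kk[C(\tau)]}.
\]
Each $\omega_{\kk[C(\delta)]}$ is the interior ideal, and twisting by the boundary replaces it by the whole ring, up to the orientation character. The orientations in (iii) give compatible generators: the element $\sum_\delta \pm 1_\delta$, which is the fundamental cycle, yields a nowhere-vanishing section of $\omega(P[\partial\Delta])$. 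This means the toric log volume forms $\pm\, d\log z_1\wedge\dots\wedge d\log z_n$ glue, because their residues on shared facets cancel. Since the sheaf is $S_2$, this section extends over codimension-two loci to give an isomorphism with $\cO$.

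\textbf{Main obstacle.} We expect the local cohomology computation in Step 1 to be the main difficulty. It requires extending Reisner/Hochster and Yuzvinsky-type arguments from simplicial to polyhedral complexes with normal monoid cones. For this we would adapt the proof via the Ishida complex and the cell-by-cell spectral sequence.
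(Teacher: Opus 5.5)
Your route is the paper's: reducedness and normal crossings in codimension one from (ii)--(iii), Cohen--Macaulayness and $H^1(P[\Delta],\cO)=H^1(|\Delta|,\QQ)\otimes\kk$ from the local cohomology of the glued monoid algebra, triviality of $\omega_{P[\Delta]}(P[\partial\Delta])$ by gluing orientation-compatible toric log volume forms in codimension one and extending by $S_2$, and semi log canonicity via the normalization. The one difference is that the paper does not redo the Ishida-complex computation you single out as the main obstacle. It notes that (i) makes $\Delta$ a semilattice, then cites \cite[Lemma 2.4.12]{Valeryannals} for ``locally Cohen--Macaulay $\Rightarrow$ $P[\Delta]$ Cohen--Macaulay'' and \cite[Theorem 2.5.6]{Valeryannals} for $H^i(P[\Delta],\cO)=H^i(|\Delta|,\QQ)\otimes\kk$.

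There is, however, a false step in Step 1: you conclude from (iv) that the affine cone $A$ itself is Cohen--Macaulay of dimension $n+1$. By your own computation, the degree-zero part of $H^i_{\mathfrak m}(A)$ is $\widetilde H^{i-1}(|\Delta|;\kk)$. Hypothesis (iv) says nothing about this, and (v) kills it only for $i=2$. For example, take a triangulation of $S^2\times[0,1]$ with $n=3$ and cells unimodular simplices, reading (i) as allowing empty intersections, which is what makes (v) a genuine hypothesis. It satisfies (i)--(v), but $A$ is its Stanley--Reisner ring, which is not Cohen--Macaulay because $\widetilde H^2\neq 0$.

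What (iv) does give is the vanishing of the pieces attached to points $u\in|\Delta|$. Hence $H^i_{\mathfrak m}(A)$ has finite length for $i\le n$, which is exactly Cohen--Macaulayness of the localizations at homogeneous primes $\mathfrak p\neq\mathfrak m$, i.e.\ of $P[\Delta]=\Proj(A)$. State and use only this. Nothing downstream needs $A$ to be Cohen--Macaulay: local duality identifies $\omega_A$ with the graded Matlis dual of $H^{n+1}_{\mathfrak m}(A)$ in any case, and Steps 2--3 only use that $P[\Delta]$ is $S_2$.

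Separately, the displayed kernel in Step 3 is wrong as written. Interior ideals restrict to zero on facets, so that kernel is all of $\bigoplus_\delta\omega_{\kk[C(\delta)]}$, the canonical module of the normalization. This is strictly smaller than $\omega_A$ along the double locus. What you need are forms with logarithmic poles along the interior $(n-1)$-cells whose residues cancel in pairs. That is what your subsequent sentences actually use, and what the paper does by trivializing in codimension one before extending by $S_2$.
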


\begin{proof}
We follow the structure of the proof of \cite[Lemma 17.1]{keel2024log}. Since every toric variety is reduced, so is the stable toric variety \(P[\Delta]\). Assumptions~(ii) and~(iii) imply that \(P[\Delta]\) is equidimensional of pure dimension \(n\), has normal crossing singularities in codimension one, and that \(P[\partial\Delta]\) is a reduced subscheme of pure codimension one. By assumption~(i), \(\Delta\) is a semilattice in the sense of \cite{Valeryannals}.
Hence \cite[Lemma 2.4.12]{Valeryannals} shows that assumption~(iv), that \(\Delta\) is locally Cohen--Macaulay, implies that \(P[\Delta]\) is Cohen--Macaulay. Finally, we have $H^i(P[\Delta], \cO_{P[\Delta]})=H^i(|\Delta|, \QQ) \otimes \kk$ by \cite[Theorem 2.5.6]{Valeryannals}, and so assumption (v) implies $H^1(P[\Delta], \cO_{P[\Delta]})=0$. This proves~(i).

The orientation of \(\Delta\) from assumption~(iii) determines compatible log volume forms on the toric irreducible components, which glue along the normal crossing codimension one strata to give a trivialization of
$\omega_{P[\Delta]}(P[\partial\Delta])$ in codimension one. Since \(P[\Delta]\) is Cohen--Macaulay, it is in particular $S_2$, and the reflexive sheaf \(\omega_{P[\Delta]}(P[\partial\Delta])\) is therefore \(S_2\). Hence the trivialization extends uniquely across the codimension-two locus, yielding
$\omega_{P[\Delta]}(P[\partial\Delta])\simeq \mathcal{O}_{P[\Delta]}$. This proves (iii).

Finally, \(P[\Delta]\) is reduced, equidimensional, \(S_2\), and normal crossing in codimension one, while \(\omega_{P[\Delta]}(P[\partial\Delta])\) is invertible by~(iii). Moreover, the normalization of \(P[\Delta]\) is the disjoint union of its toric irreducible components, each endowed with its toric boundary, and every such pair is log canonical by \cite[Corollary 11.4.25]{CLS}. Therefore, the pair \((P[\Delta],P[\partial\Delta])\) is semi log canonical, proving~(ii).
\end{proof}

\begin{remark}
In the proof of Lemma~\ref{lem_2}, we use \cite[Lemma~2.4.12]{Valeryannals} to conclude that $P[\Delta]$ is Cohen--Macaulay. The proof of \cite[Lemma~2.4.12]{Valeryannals} (see also \cite[Theorem~2.3.19]{Valeryannals}) generalizes the argument in the proof of \cite[\S4.6]{stanley1987generalized}, which in turn is based on the work of \cite{yuzvinsky1987cohen}. The latter develops a general framework for establishing Cohen--Macaulay properties of ``glued schemes.'' In the special case where $\Delta$ is a simplicial complex, corresponding to the classical setting of Stanley--Reisner rings, this characterization was first proved in \cite[Theorem~1]{reisner1976cohen}.

The definition of local Cohen--Macaulay for a polyhedral complex $\Delta$ in Lemma~\ref{lem_2}, 
given by the cohomological vanishing
$\widetilde{H}_i\bigl(|\Delta|,|\Delta|\setminus\{u\},\QQ\bigr)=0$
for every point $u\in |\Delta|$ and every $0\le i<n$, is a combinatorial analogue of Grothendieck's cohomological characterization of Cohen--Macaulay schemes: 
if $Y$ is a scheme of pure dimension $n$, then $Y$ is Cohen--Macaulay at a point $y\in Y$ if and only if
$H^i_y(Y,\mathcal{O}_Y)=0$
for all $0\le i<n$, where $H^i_y$ denotes local cohomology \cite[Theorem~3.8]{Hartshorne1967local}.
\end{remark}

In the following result, we consider a cone complex that is homeomorphic to a cone over a sphere. The associated compact polyhedral complex is therefore homeomorphic to a ball, and hence the cohomological vanishing assumptions of Lemma \ref{lem_2} are automatically satisfied.

\begin{lemma} \label{lem_comp_vertex}
 Let $(Y_0,D_0)$ be the compactified vertex associated to a cone complex $\Omega$ and a non-negative integral piecewise linear function $h: |\Omega| \rightarrow \RR_{\geq 0}$. Assume that the intersection of any two cones of $\Omega$ is a single cone of $\Omega$, and that the level set $h^{-1}(1)$ is homeomorphic to a sphere of dimension $n-1$. 
Then, the following hold:
\begin{itemize}
    \item[(i)] The scheme $Y_0$ is reduced, Cohen--Macaulay of pure dimension $n$, normal crossing in codimension one, and satisfies $H^1(Y_0, \cO_{Y_0})=0$. Moreover, $D_0 \subset Y_0$ is a reduced subscheme of pure codimension one. 
    \item[(ii)] The pair $(Y_0, D_0)$ is semi log canonical.
    \item[(iii)] The sheaf $\omega_{Y_0}(D_0)$ is isomorphic to $\cO_{Y_0}$.
\end{itemize}
\end{lemma}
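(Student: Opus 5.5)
The plan is to reduce to Lemma \ref{lem_2}. By Lemma \ref{lem_1}, we have $Y_0=P[\Delta]$ with $\Delta:=\Delta(\Omega,h)$, whose cells are $\delta_\sigma=\{x\in\sigma\mid h(x)\le 1\}$. The boundary $D_0=\Proj(\kk[\Omega])$ is the stable toric variety of the compact polyhedral complex $h^{-1}(1)$, whose cells are $\sigma\cap h^{-1}(1)$. It will therefore suffice to check hypotheses (i)--(v) of Lemma \ref{lem_2} for $\Delta$, and to identify $\partial\Delta$ with $h^{-1}(1)$. Conclusions (i)--(iii) then transfer verbatim.

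\textbf{Topology of $\Delta$.} First, note that $h$ must be positive away from the apex on each cone. Otherwise some ray would lie in $h^{-1}(0)$ and would not meet $h^{-1}(1)$. Granting this, radial projection identifies $|\Omega|$ with the cone over $h^{-1}(1)\simeq S^{n-1}$. Hence $|\Delta|=h^{-1}([0,1])$ is homeomorphic to the closed ball $B^n$, with boundary sphere $h^{-1}(1)$.

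\textbf{Checking the hypotheses of Lemma \ref{lem_2}.}
\begin{itemize}
\item[(i)] The intersection of two cells is $\delta_{\sigma\cap\sigma'}$, since $\sigma\cap\sigma'$ is a cone of $\Omega$. Faces of $\delta_\sigma$ of the form $\sigma\cap h^{-1}(1)$ are also cells, and they intersect correctly. We take the cells of $\Delta$ to include these, which is automatic from the polytope-face condition.
\item[(ii)] Purity of dimension $n$ follows because $|\Delta|$ is a topological $n$-manifold with boundary, so every maximal cell must have dimension $n$.
\item[(iii)] An orientation of $B^n$ induces orientations on the maximal cells. Since $B^n$ is a manifold with boundary, each $(n-1)$-cell is a face of exactly two maximal cells (interior cells) or of exactly one (boundary cells). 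Two adjacent cells induce opposite orientations on their common face. The boundary cells are exactly those lying in $h^{-1}(1)$, so $\partial\Delta=h^{-1}(1)$ and $P[\partial\Delta]=D_0$.
\item[(iv)] The local Cohen--Macaulay condition holds because, for a ball, $\widetilde H_i(|\Delta|,|\Delta|\setminus\{u\})$ vanishes for $i<n$. At interior points the local homology is that of $(\RR^n,\RR^n\setminus 0)$. At boundary points it is that of a half-space, which is acyclic.
\item[(v)] Finally, $H^1(B^n,\QQ)=0$.
\end{itemize}

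\textbf{Main obstacle.} The delicate step is (iii): deducing the pseudomanifold structure from the mere homeomorphism with a ball. I would argue via local homology. At a point $u$ in the relative interior of an $(n-1)$-cell $\tau$, the group $H_n(|\Delta|,|\Delta|\setminus u)$ has rank equal to (the number of maximal cells containing $\tau$) minus one. This rank must be $1$ in the interior and $0$ on the boundary, which gives exactly two or one maximal cells respectively. Coherence of the induced orientations then comes from a global fundamental class of the ball relative to its boundary. The small points about positivity of $h$ and about the cells on $h^{-1}(1)$ should be routine.
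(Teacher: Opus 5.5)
Your proposal is correct and follows the paper's proof: you identify $Y_0=P[\Delta(\Omega,h)]$ by Lemma \ref{lem_1}, observe that $|\Delta(\Omega,h)|$ is an $n$-ball with boundary $h^{-1}(1)$, and check hypotheses (i)--(v) of Lemma \ref{lem_2}; your local-homology count of maximal cells around each $(n-1)$-cell makes explicit what the paper condenses into the remark that links are spheres or half-spheres. One small repair: positivity of $h$ off the apex does not follow from a ray in $h^{-1}(0)$ ``missing'' $h^{-1}(1)$, but from the condition $\kk[\Omega]_0=\kk$ built into Definition \ref{def_comp_vertex} via Example \ref{Example:projcone} (a nonzero face of a rational cone on which $h$ vanishes would contribute nonzero integral points of degree $0$), and the paper uses the same fact implicitly when it asserts the ball homeomorphism.
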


\begin{proof}
By Lemma~\ref{lem_1}, we have \(Y_0=P[\Delta(\Omega,h)]\). Since the intersection of any two cones of \(\Omega\) is a cone of \(\Omega\), it follows that the intersection of any two cells of \(\Delta(\Omega,h)\) is a cell of \(\Delta(\Omega,h)\). Thus, condition~(i) of Lemma~\ref{lem_2} is satisfied.

Furthermore, \(h^{-1}(1)\) is homeomorphic to the \((n-1)\)-sphere, so the pair \((|\Delta(\Omega,h)|,|\partial\Delta(\Omega,h)|)\) is homeomorphic to the pair consisting of the \(n\)-ball and its boundary. In particular, conditions~(ii)--(iv) of Lemma~\ref{lem_2} hold for \(\Delta(\Omega,h)\) 
since the ball is orientable and the link of every point of $|\Delta(\Omega, h)|$ is a sphere for an interior point and a half-sphere for a boundary point.
Moreover, condition (v) of Lemma \ref{lem_2} also holds since $|\Delta(\Omega,h)|$ is a ball. 
Applying Lemma~\ref{lem_2} to $\Delta(\Omega,h)$ therefore yields the desired conclusions.
\end{proof}

\subsection{Extending properties from the central fiber to general fibers}
\label{section_extending}

In this section, we show how properties of a general fiber in a degeneration can be deduced from the corresponding properties of the central fiber.

\begin{definition}
A \emph{family of pairs} $\rho: (\cY , \cD) \rightarrow \A^1$ is a flat projective morphism of schemes $\rho: \cY \rightarrow \A^1$, together with a closed subscheme $\cD \subset \cY$ of pure codimension one such that $\rho|_\cD: \cD \rightarrow \A^1$ is flat.
\end{definition}

The following result generalizes \cite[Lemma~8.33]{GHKK} (Lemma~8.42 in the arXiv version), originally proved for normal varieties, to the broader setting of semi log canonical varieties. Similar statements have also been established in \cite[Proposition~7.1]{HKY20}, \cite[Theorem~6.36]{AAB2024ksba}, and \cite[Theorem~17.5]{keel2024log}.

\begin{lemma} \label{lem_central_general_technical}
Let
$\rho: (\cY, \cD) \rightarrow \A^1$ be a family of pairs, with central fiber $(Y_0, D_0):=\rho^{-1}(0)$. 
Assume that:
\begin{itemize}
    \item[(i)] The scheme $Y_0$ is Cohen--Macaulay and $H^1(Y_0, \cO_{Y_0})=0$.
    \item[(ii)] The pair $(Y_0, D_0)$ is semi log canonical.
    \item[(iii)] The sheaf $\omega_{Y_0}(D_0)$ is isomorphic to $\cO_{Y_0}$.
\end{itemize}
Then, there exists a Zariski open subset $U \subset \A^1$ containing $0$, such that, for every $t\in U$, denoting  
$(Y_t, D_t):=\rho^{-1}(t)$,
$Y_t$ is Cohen--Macaulay, the pair $(Y_t, D_t)$ is semi log canonical and $\omega_{Y_t}(D_t) \simeq \cO_{Y_t}$.
\end{lemma}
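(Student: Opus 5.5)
The plan is to treat the three conclusions separately and spread each from the central fiber by a semicontinuity or deformation principle, following the structure of \cite[Lemma~8.33]{GHKK}.

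\emph{Cohen--Macaulayness.} Since $\rho$ is flat and projective and $Y_0$ is Cohen--Macaulay, the locus of points of $\cY$ at which the fiber is Cohen--Macaulay is open (EGA IV, 12.1.6) and contains $Y_0$. Its complement is closed, and its image under the proper map $\rho$ is closed and misses $0$. Removing that image gives an open $U\ni 0$ over which every fiber $Y_t$ is Cohen--Macaulay. Since $\rho$ is flat with Cohen--Macaulay fibers, $\cY_U$ is Cohen--Macaulay and has a relative dualizing sheaf $\omega_{\cY/\A^1}$ that commutes with base change to fibers. I will also shrink $U$ so that $\cD_U$ is relatively Cartier in codimension one on fibers, or at least so that $\omega_{\cY/\A^1}(\cD)$, defined as a reflexive $S_2$ sheaf, restricts to $\omega_{Y_t}(D_t)$.

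\emph{Triviality of $\omega(D)$.} Put $\mathcal L:=\omega_{\cY/\A^1}(\cD)$ on $\cY_U$. Its restriction to $Y_0$ is $\cO_{Y_0}$, which is invertible, so $\mathcal L$ is invertible near $Y_0$ by Nakayama. Shrinking $U$ properly, we may assume it is invertible on all of $\cY_U$. The line bundle $\mathcal L$ is then a deformation of $\cO_{Y_0}$. Because $H^1(Y_0,\cO_{Y_0})=0$, cohomology and base change shows that $\rho_*\mathcal L$ is locally free of rank $h^0(\cO_{Y_0})=1$ near $0$; indeed $H^0(Y_0,\mathcal L|_{Y_0})$ is one-dimensional since $Y_0$ is connected and reduced. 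A local generator then gives a section of $\mathcal L$ that is nowhere vanishing on $Y_0$. Its zero locus is closed, and its image misses $0$, so after shrinking $U$ the section trivializes $\mathcal L$ on $\cY_U$. Restricting to the fibers gives $\omega_{Y_t}(D_t)\simeq\cO_{Y_t}$.

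\emph{Semi log canonicity.} This is the main obstacle. The first step is to check the conditions other than discrepancies. Demazure-type conditions open in flat families are: $S_2$ (already have CM), reduced, and nodal in codimension one (seminormal double normal crossings in codimension one spreads since the non-nc locus in fibers is closed of the right codimension). $K+D$ is Cartier by the previous step. For the discrepancy condition, I would use inversion of adjunction in its slc form (Kollár, \emph{Families of varieties}, or \cite[Theorem~4.54]{kollar2013singularities}-type results): since $Y_0$ is a Cartier divisor in $\cY_U$ and $(Y_0,D_0)$ is slc, the pair $(\cY_U,\cD_U+Y_0)$ is slc near $Y_0$. This step is the delicate one because $\cY$ need not be normal; I would first pass to the normalization, using that the conductor is compatible with the normal crossing structure in codimension one. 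The slc locus is open, so after shrinking $U$ the pair $(\cY_U,\cD_U+Y_0)$ is slc everywhere. Hence $(\cY_U,\cD_U)$ is slc. Finally, for general $t$, the fiber $Y_t$ is a Cartier divisor with $Y_t\sim Y_0$, and adjunction, in the easy direction via Bertini-type genericity, or by applying the same inversion-of-adjunction argument together with the $\G_m$-equivariance when present, gives that $(Y_t,D_t)$ is slc. If needed, I would instead use Kollár's result that slc-ness is an open condition in locally stable families, since the family is locally stable over $U$ once the total space pair with the fiber added is slc.
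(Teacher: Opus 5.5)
Your overall structure matches the paper's, which also follows \cite{GHKK}: openness of the Cohen--Macaulay locus, then triviality of $\omega_{\cY/\A^1}(\cD)$ near $Y_0$, then openness of semi log canonicity. Your cohomology-and-base-change trivialization of a line bundle deforming $\cO_{Y_0}$ is a fine substitute for the paper's appeal to the relative Picard scheme.

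There is, however, a genuine gap at the sentence \emph{Its restriction to $Y_0$ is $\cO_{Y_0}$}. Formation of $\mathcal L=\omega_{\cY/\A^1}(\cD)=\mathcal{H}om(\cO_{\cY}(-\cD),\omega_{\cY/\A^1})$ does not in general commute with restriction to fibers; this is Koll\'ar's condition. What you get for free is only the natural map $\mathcal L|_{Y_0}\to\omega_{Y_0}(D_0)$, which is an isomorphism in codimension one on $Y_0$. A priori $\mathcal L|_{Y_0}$ need not be $S_2$ at points of codimension $\geq 2$, so this map need not be surjective there, and your Nakayama argument for invertibility has nothing to start from. Shrinking $U$ cannot fix this, because the problem sits at $t=0$ itself. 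Asking $\cD$ to be relatively Cartier in codimension one on the fibers only recovers the codimension-one statement. Your slc step also needs $K_{\cY}+\cD$ to be Cartier, so the gap propagates to the end. (Its fallback, openness of slc in such families \cite[Corollary~4.45]{kollar2023families-of-varieties}, is exactly what the paper uses.)

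The missing idea is to use hypothesis (iii) to control $D_0$ itself, not just $\omega_{Y_0}(D_0)$:
\begin{itemize}
\item[(a)] $\cO_{Y_0}(-D_0)\simeq\mathcal{H}om(\omega_{Y_0}(D_0),\omega_{Y_0})\simeq\omega_{Y_0}$ is Cohen--Macaulay because $Y_0$ is, so $D_0$ is Cohen--Macaulay by \cite[Corollaries~2.63 and~2.71]{kollar2013singularities}.
\item[(b)] By openness, after shrinking, the fibers $D_t$ and the total spaces $\cY$ and $\cD$ are Cohen--Macaulay. Hence $\cO_{\cY}(-\cD)$ and its dual $\omega_{\cY}(\cD)\simeq\omega_{\cY/\A^1}(\cD)$ are Cohen--Macaulay.
\item[(c)] The restriction of $\omega_{\cY/\A^1}(\cD)$ to the Cartier divisor $Y_0$ is then Cohen--Macaulay, hence $S_2$. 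So the codimension-one isomorphism to $\omega_{Y_0}(D_0)\simeq\cO_{Y_0}$ extends to an isomorphism.
\end{itemize}
Alternatively, flatness of $\cD$ gives $\cO_{\cY}(-\cD)|_{Y_0}\simeq\cO_{Y_0}(-D_0)\simeq\omega_{Y_0}$, which is maximal Cohen--Macaulay. Then $\mathcal{E}xt^1_{Y_0}(\omega_{Y_0},\omega_{Y_0})=0$ forces $\mathcal{E}xt^1(\cO_{\cY}(-\cD),\omega_{\cY/\A^1})$ to vanish near $Y_0$, and $\mathcal{H}om$ then commutes with base change.

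With this in hand, the rest of your argument goes through: invertibility and trivialization of $\mathcal L$, the $S_2$ comparison on nearby fibers, and the appeal to openness of slc.
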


\begin{proof}
We follow the structure of the proof of \cite[Lemma~8.33]{GHKK} (Lemma~8.42 in the arXiv version).
Since $Y_0$ is Cohen--Macaulay by (i) and $\omega_{Y_0}(D_0)\simeq\mathcal O_{Y_0}$ by (iii), \cite[Corollary~2.71]{kollar2013singularities} implies that
\[
\mathcal O_{Y_0}(-D_0)
\simeq
\mathcal Hom_{\mathcal O_{Y_0}}\!\bigl(\omega_{Y_0}(D_0),\omega_{Y_0}\bigr)
\]
is Cohen--Macaulay. It then follows from \cite[Corollary~2.63]{kollar2013singularities} that $D_0$ is Cohen--Macaulay.

Since $\rho:\mathcal Y\to\A^1$ is flat and projective, \cite[Theorem~12.2.4]{grothendieck1966EGAIV} shows that, after replacing $\A^1$ by a Zariski open neighborhood of $0$, the fibers $Y_t$ and $D_t$ are Cohen--Macaulay for every $t$. Applying again \cite[Corollaries~2.63 and~2.71]{kollar2013singularities}, we deduce that
$\mathcal O_{Y_t}(-D_t)$
and consequently $
\omega_{Y_t}(D_t)
\simeq
\mathcal Hom_{\mathcal O_{Y_t}}
\bigl(\mathcal O_{Y_t}(-D_t),\omega_{Y_t}\bigr)$
are Cohen--Macaulay.

Since $\A^1$ is smooth and hence Cohen--Macaulay, \cite[\href{https://stacks.math.columbia.edu/tag/045J}{Tag~045J}]{stacks-project} implies, after further shrinking the base if necessary, that the total spaces $\mathcal Y$ and $\mathcal D$ are Cohen--Macaulay. Another application of \cite[Corollaries~2.63 and~2.71]{kollar2013singularities} therefore shows that $\omega_{\mathcal Y}(\mathcal D)$ is Cohen--Macaulay. Since
\[
\omega_{\mathcal Y/\A^1}
=
\omega_{\mathcal Y}\otimes(\rho^*\omega_{\A^1})^\vee
\simeq
\omega_{\mathcal Y},
\]
we conclude that $\omega_{\mathcal Y/\A^1}(\mathcal D)$ is Cohen--Macaulay as well.

Applying \cite[\href{https://stacks.math.columbia.edu/tag/045J}{Tag~045J}]{stacks-project}, we obtain that
$\omega_{\mathcal Y/\A^1}(\mathcal D)\big|_{Y_0}$ is Cohen--Macaulay.
Since $(Y_0,D_0)$ is semi log canonical by (ii), $Y_0$ is normal crossing in codimension one, and so the natural restriction morphism
\[
\omega_{\mathcal Y/\A^1}(\mathcal D)\big|_{Y_0}
\longrightarrow
\omega_{Y_0}(D_0)
\simeq
\mathcal O_{Y_0}.
\]
is an isomorphism in codimension one. As both source and target are Cohen--Macaulay, and so $S_2$ sheaves, it follows that this morphism is an isomorphism $
\omega_{\mathcal Y/\A^1}(\mathcal D)\big|_{Y_0}
\simeq
\mathcal O_{Y_0}$. As $\omega_{\mathcal Y/\A^1}(\mathcal D)$ is flat over $\A^1$, it follows that  $\omega_{\mathcal Y/\A^1}(\mathcal D)$ is invertible over a Zariski open neighborhood of $0$.
Since $H^1(Y_0,\mathcal O_{Y_0})=0$ and $\rho$ is flat and projective, we also have $H^1(Y_t, \cO_{Y_t})=0$ for $t$ in a Zariski open neighborhood of $0$ by upper semicontinuity of coherent cohomology \cite[Theorem III.12.8]{Hartshorne}. 
Hence, the connected component of the identity in the relative Picard scheme is an isomorphism over $\A^1$, and so we have
$\omega_{\mathcal Y/\A^1}(\mathcal D)
\simeq
\mathcal O_{\mathcal Y}$. 

For $t$ in a Zariski open neighborhood of $0$, the fibers $Y_t$ remain normal crossing in codimension one by the local deformation theory of the node $xy=0$. Hence the natural restriction morphism
\[
\mathcal O_{Y_t}
\simeq
\omega_{\mathcal Y/\A^1}(\mathcal D)\big|_{Y_t}
\longrightarrow
\omega_{Y_t}(D_t)
\]
is an isomorphism in codimension one. Since both $\mathcal O_{Y_t}$ and $\omega_{Y_t}(D_t)$ are Cohen--Macaulay, and therefore $S_2$, the morphism is an isomorphism:
$
\omega_{Y_t}(D_t)
\simeq
\mathcal O_{Y_t}$. 

Finally, $(Y_0,D_0)$ is semi log canonical, the divisor $\mathcal D$ is flat over $\A^1$, and the sheaf $\omega_{\mathcal Y/\A^1}(\mathcal D)\simeq\mathcal O_{\mathcal Y}$ is invertible.
Therefore, \cite[Corollary~4.45]{kollar2023families-of-varieties} implies that $(Y_t,D_t)$ is semi log canonical for all $t$ in a Zariski open neighborhood of $0$.
\end{proof}

\begin{lemma} \label{lem_central_general}
Let
$\rho: (\cY, \cD) \rightarrow \A^1$ be a $\G_m$-equivariant family of pairs, with central fiber $(Y_0, D_0):=\rho^{-1}(0)$, and general fiber $(Y,D):=\rho^{-1}(t)$ for any  $t \in \A^1 \setminus \{0\}$. Assume that:
\begin{itemize}
    \item[(i)] The scheme $Y_0$ is Cohen--Macaulay and $H^1(Y_0, \cO_{Y_0})=0$.
    \item[(ii)] The pair $(Y_0, D_0)$ is semi log canonical.
    \item[(iii)] The sheaf $\omega_{Y_0}(D_0)$ is isomorphic to $\cO_{Y_0}$.
    \item[(iv)] The scheme $Y$ is normal.
\end{itemize}
Then, $Y$ is Cohen--Macaulay, and the pair $(Y,D)$ is log canonical and log Calabi--Yau, that is, $K_Y +D \sim 0$.
\end{lemma}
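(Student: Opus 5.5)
The plan is to combine Lemma \ref{lem_central_general_technical} with the $\G_m$-equivariance of $\rho$, and then to pass from semi log canonical to log canonical using the normality of $Y$.

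\textbf{Step 1: properties near $0$.} Under hypotheses (i)--(iii), Lemma \ref{lem_central_general_technical} yields a Zariski open subset $U\subset\A^1$ containing $0$ with the following properties for every $t\in U$: the fiber $Y_t$ is Cohen--Macaulay, the pair $(Y_t,D_t)$ is semi log canonical, and $\omega_{Y_t}(D_t)\simeq\cO_{Y_t}$.

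\textbf{Step 2: transport to all $t\neq 0$.} Since $U$ is a nonempty Zariski open subset of $\A^1$, it contains some $t_0\neq 0$. The $\G_m$-action on $\cY$ preserves $\cD$ and covers scaling on $\A^1$. Hence the action of $\lambda=t/t_0$ gives an isomorphism of pairs $(Y_{t_0},D_{t_0})\simeq(Y_t,D_t)$ for every $t\neq 0$. So all three properties hold for $(Y,D)=\rho^{-1}(t)$, whichever $t\neq 0$ is chosen.

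\textbf{Step 3: from semi log canonical to log canonical.} By hypothesis (iv), $Y$ is normal. For a normal variety, the semi log canonical condition is exactly the log canonical condition: the normalization is the identity, and the conductor is empty. Hence $(Y,D)$ is log canonical.

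\textbf{Step 4: the log Calabi--Yau condition.} Since $Y$ is normal, the isomorphism $\omega_Y(D)\simeq\cO_Y$ says that the rank one reflexive sheaf $\cO_Y(K_Y+D)$ is trivial. Here $D$ is reduced of pure codimension one, being a fiber of $\cD$; one may need to check this for a general fiber, using flatness of $\cD$ and the semi log canonical property. It follows that $K_Y+D\sim 0$.

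The only step needing care is Step 2. It requires making explicit that the $\G_m$-action identifies all nonzero fibers compatibly with $\cD$, which is built into the notion of a $\G_m$-equivariant family of pairs. The genuine difficulty of the argument lies entirely in Lemma \ref{lem_central_general_technical}, which we take as given.
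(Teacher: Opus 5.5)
Your proposal is correct and follows the paper's proof. It applies Lemma \ref{lem_central_general_technical}, then uses the normality of $Y$ to upgrade semi log canonical to log canonical and to read $\omega_Y(D)\simeq\cO_Y$ as $K_Y+D\sim 0$. The only difference is that you make explicit the $\G_m$-transport from the neighbourhood $U$ of $0$ to an arbitrary $t\neq 0$, which the paper leaves implicit; your hedge about reducedness of $D$ is unnecessary, since it is already part of $(Y_t,D_t)$ being a semi log canonical pair, and in the application it holds by Proposition \ref{prop_proj_comp}.
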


\begin{proof}
By Lemma \ref{lem_central_general_technical}, \(Y\) is Cohen--Macaulay, the pair \((Y,D)\) is semi log canonical, and \(\omega_Y(D) \simeq \mathcal{O}_Y\). As \(Y\) is normal by assumption, the semi log canonical condition is equivalent to log canonicity, and hence \((Y,D)\) is log canonical.
\end{proof}

\subsection{End of the proof of the  log Calabi--Yau criterion}
\label{section_end_proof}

In this final section, we combine the results of 
\S \ref{section_degeneration}-\ref{section_extending}
to  prove the log Calabi--Yau criterion given by Theorem \ref{thm_main_C}.

Let $\cF$ be a projective filtration on a $\kk$-algebra $R$, satisfying the assumptions of Theorem \ref{thm_main_C}, and let $(Y,D)$ be the associated projective compactification of $X=\Spec(R)$. Conclusion (i) of Theorem \ref{thm_main_C} follows from assumptions (i) and (ii) by Proposition \ref{prop_proj_comp}. 
In order to prove conclusions (ii) and (iii), we consider the canonical degeneration $\rho: (\cY, \cD) \rightarrow \A^1$ of $(Y,D)$ given by Definition \ref{def_canonical_degeneration}. 

By assumption (iii), there exist a cone complex $\Omega$ and a non-negative integral piecewise linear function $h: |\Omega| \rightarrow \RR_{\geq 0}$ such that the corresponding algebra $\kk[\Omega]$ endowed with the grading defined by $h$ is isomorphic to 
$\gr_\cF(R)$ as a graded algebra. Hence, the central fiber $(Y_0, D_0)$ of the canonical degeneration is the compactified vertex 
associated to $\Omega$ and $h$ by Lemma \ref{lem_central_vertex}.

Moreover, assumption (iii) also implies that the intersection of two cones of $\Omega$ is a cone of $\Omega$, and the level set $h^{-1}(1)$ is homeomorphic to a sphere. 
Therefore, Lemma \ref{lem_comp_vertex} applies to the compactified vertex $(Y_0, D_0)$, and it follows that $Y_0$ is Cohen--Macaulay and that the pair $(Y_0, D_0)$ is semi log canonical and log Calabi--Yau.
Finally, Lemma \ref{lem_central_general} shows that these properties deform from the central fiber $(Y_0,D_0)$ 
to the general fiber $(Y,D)$ of the canonical degeneration, that is, $Y$ is also Cohen--Macaulay, and the pair $(Y,D)$ is log canonical and log Calabi--Yau. This proves conclusions (ii)--(iii), and hence completes the proof of  Theorem \ref{thm_main_C}.

\bibliographystyle{plain}
\bibliography{bibliography}
%-------------------------------------------------------------------------------

\end{document}